\numberwithin{equation}{section}
\newtheorem{maintheorem}{Theorem}
\newtheorem{cor}{Corollary}[section]
\newtheorem{con}{Conjecture}[section]
\newtheorem{lem}{Lemma}[section]
\newtheorem{defn}{Definition}[section]
\newtheorem{prop}{Proposition}[section]
\newtheorem{rem}{Remark}[section]
\newtheorem{prob}{Problem}
\numberwithin{equation}{section}
\DeclareMathSymbol{\eset}{\mathalpha}{AMSb}{"3F}
\def\RR{{\mathcal R}}
\def\TT{{\mathbb T_d}}
\def\Tk+{{\mathbb T_d^+}}
\def\SS{{\mathcal S}}
\def\GG{{\mathcal G}}
\def\VV{{\cal V}}
\def\EE{{\cal E}}
\newcommand{\la}{{\lambda}}
\newcommand{\CT}{\mathrm{CT}}
\newcommand{\PP}{{\mathbb P}}
\def\eps{\varepsilon}
\newcommand{\sfrac}[2]{\mbox{\small $\frac{#1}{#2}$}}
\newcommand{\ssfrac}[2]{\mbox{\footnotesize $\frac{#1}{#2}$}}
\newcommand{\half}{\ssfrac{1}{2}}
\newcommand{\rr}{\mathbf{r}}
\newcommand{\oo}{\mathbf{o}}
\renewcommand{\Pr}{ \mathrm P}
\newcommand{\E}{ \mathbb E}
\newcommand{\Pois}{\mathrm{Pois}}
\newcommand{\as}{\mathrm{a.s.}}
\newcommand{\whp}{\mathrm{w.h.p.}}
\newcommand{\gd}{\delta}
\newcommand{ \cL}{ \mathcal L }
\newcommand{ \cT}{ \mathcal T }
\newcommand{\SC}{ \mathrm{SC} }
\let\phi=\varphi
\def\qed{\hfill $\square$}
\def\HHH{{\cal H}}
\def\dist{\mathop{\rm dist}}
\newcommand{\N}{\mathbb N}
\newcommand{\R}{\mathbb R}
\newcommand{\Z}{\mathbb Z}
\newcommand{\W}{\mathcal W}
\newcommand{\plant}{w_{\mathrm{plant}}}
\newcommand{\G}{{\cal G}}
\newcommand{\w}{{\mathbf w}}
\begin{document}

\title{Frogs on trees? }

\author{Jonathan Hermon \thanks{
University of Cambridge, Cambridge, UK. E-mail: {\tt jonathan.hermon@statslab.cam.ac.uk}. Financial support by
the  EPSRC grant EP/L018896/1.}}

\date{}
\maketitle

\begin{abstract}
We study a system of simple random walks on  $\mathcal{T}_{d,n}=(\VV_{d,n},\EE_{d,n})$, the  
 $d$-ary tree of depth $n$, known as the frog model. Initially there are Pois($\lambda$)  particles at
each site, independently, with one additional particle planted at some vertex $\mathbf{o}$. Initially all particles are inactive,
except for the ones which are placed at $\mathbf{o}$.  Active particles perform independent simple random walk on the tree of length   $ t \in \N \cup \{\infty \}  $, referred to as the particles' lifetime. When an active particle hits an inactive particle, the
latter becomes active. The model is often interpreted as a model for a spread of an epidemic. As such, it is natural to investigate whether the entire population is eventually infected, and if so, how quickly does this happen. Let $\mathcal{R}_t$ be the set of vertices which are visited by the process (with lifetime $t$). The susceptibility $\SS(\cT_{d,n}):=\inf \{t:\mathcal{R}_t=\VV_{d,n}  \} $ is the minimal lifetime required for the process to visit all sites. The cover time $\mathrm{CT}(\cT_{d,n})$ is the first time by which every vertex was visited at least once, when we take $t=\infty$.  We show that there exist absolute constants $c,C>0$ such that for all $d \ge 2$ and all  $\lambda = \la_n >0$ which does not diverge nor vanish too rapidly as a function of $n$,  with high probability $c \le \lambda\SS(\cT_{d,n}) /[n\log (n / \la )] \le C$ and $\mathrm{CT}(\cT_{d,n})\le  3^{4\sqrt{ \log |\VV_{d,n}| }} $. 
\\[.3cm]
{\bf Keywords:} frog model, epidemic spread, rumor spread, simple random walks,
cover times, susceptibility, trees.
\end{abstract}

\section{Introduction and results}
\label{intro} We study a system of  random walks known
as the \emph{frog model}. The frog model on infinite graphs has received much attention, e.g.~\cite{telcs1999branching,alves2002phase,alves2002shape,popov2001frogs,hoffman,hoffman2,01}. The aim of this paper is to launch the investigation of the model on finite graphs with an emphasis on finite trees.\footnote{In the year following the time at which the first draft of this paper was posted online, two other papers concerning the frog model on finite graphs were posted online. The first is \cite{frogs2}, which is some sense a continuation of this work. The base graphs considered in \cite{frogs2} are (1) $d$-dim tori of side length $n$ for all $d \ge 1$ and (2) expanders. The second is \cite{hoffman3} which contains some related results to our main results, which we shall discuss in more detail.} 

In this paper we study the model in the case that the underlying graph $\GG=(\VV,\EE)$ is some finite connected simple undirected graph. More specifically, we focus mainly on the case that $\GG$ is the finite $d$-ary tree of depth $n$, denoted by $\cT_{d,n}=(\VV_{d,n},\EE_{d,n})$. We employ the convention that the root, which throughout is denoted by $\mathbf{r}$  has degree $d$ (while the rest of the non-leaf vertices are of degree $d+1$). As we soon explain in more detail, we study some natural parameters associated with the frog model on finite graphs, which are meaningless in the infinite setup.

\medskip

 The frog model on $\GG$ with density $\la$  can be described as
follows.  Initially there are Pois($\lambda$)  particles at
each vertex of  $
\GG $, independently (where Pois($\la$) is the Poisson distribution with mean $\la$).\footnote{One can also consider different initial configurations of particles. See \cite{stochastic} for stochastic comparison between different random initial configurations.} A site of $ \GG $ is singled out and called its \emph{origin}, denoted by $\oo$ (when $\G$ is a tree, we do not assume that the origin is the root of the tree $\rr$). An additional particle, denoted by $w_{\mathrm{plant}}$, is planted at $\oo$. This is done in order to ensure that the process does not instantly die out. All
particles are inactive (sleeping) at time zero, except for those occupying the origin. Each active particle performs a
discrete-time simple random walk (SRW) of length $\tau $ (for some $\tau \in \N \cup \{\infty \} $) on the vertices of $ \GG $ (i.e.~at each step, it moves to a random neighbor of its current position, chosen from the uniform distribution over the neighbor set) after which it cannot become reactivated.
We refer to $\tau$ as the particles' \emph{lifetime}. Up to the time a particle dies (i.e.~during the $\tau$ steps of its walk), it activates all
sleeping particles it hits along its way. From the moment an
inactive particle is activated, it performs
the same dynamics over its lifetime $ \tau $, independently of
everything else. We denote the corresponding probability measure by $\PP_{\la}$. Note that there is no interaction between active particles, which
means that, once activated, each active particle moves independently of everything
else. 

\medskip

We now define two natural parameters for the frog model on a finite graph $\GG$.
Note that (in contrast to the setup in which $\GG$ is infinite) $\as$ there exists a finite minimal lifetime $\tau$ (which is a function of the initial configuration of the particles and the walks they pick) for which every vertex is visited by an active particle before the process ``dies out". We define this lifetime as the \emph{susceptibility} $\mathcal{S}(\GG)$. Another interesting quantity is the \emph{cover time} $\mathrm{CT}(\GG)$, defined as the minimal time by which every vertex is visited by at least one active particle when $\tau=\infty$ (i.e.\ when particles never die). More explicit definitions of the susceptibility and the cover time are given in \eqref{e:SandCT}.   

\medskip

The  $A+B \to 2B$ family of models, often
motivated as models for the spread of a rumor or infection,  are  defined by the following rule: there are type $A$ and $B$ particles occupying a graph $G$, say with densities $\la_A,\la_B>0$. They perform independent either discrete-time SRWs with holding probabilities $p_A,p_B \in [0,1]$ or continuous-time SRWs with rates $r_A,r_B \ge 0$  (possibly depending on the type). When a type $B$ particle collides with a type $A$ particle, the latter transforms into a type $B$ particle.
The frog model, whose name was coined in 1996 by
Rick Durrett, can be considered as a particular case of  the above dynamics in which the type $A$ particles are immobile ($p_A=1$ or $r_A=0$). Keeping the aforementioned interpretation of the model in mind, the cover time  and the susceptibility are indeed natural quantities. The former is roughly the minimal time by which all individuals are infected and the latter is the minimal lifetime $\tau$ of an individual infected by a virus,   sufficient for wiping out the entire population.
In this interpretation, the more likely  $\SS(\GG)$ is to be small, the more susceptible the population is.

 In a series of papers Kesten and Sidoravicius \cite{kesten2005spread,kesten2006,kesten2008shape} studied (in the continuous-time setup) the set of vertices visited by time $t$  by a type $B$ particle in the   $A+B \to 2B$  process when the underlying graph is the $d$-dimensional Euclidean lattice $\Z^d$,  $r_A,r_B>0$ and initially there are $B$ particles only at the origin. In particular, they proved a shape theorem for this set when $r_A=r_B$ and $\la_A=\la_B $ \cite{kesten2008shape} (and derived bounds on its growth in the general case \cite{kesten2005spread}). An analogous shape theorem for the frog model on $\Z^d$ was
proven by Alves, Machado, and Popov in discrete-time \cite{alves2002shape,alves3} and by Ram\' irez
and Sidoravicius in continuous-time \cite{RS}.

\medskip

 Most of the literature on the model is focused on the case that the underlying graph on which the particles perform their random walks is $
\Z^d$ for some $d \ge1$, e.g.\ \cite{alves2002phase,alves2002shape,popov2001frogs,RS} (in \cite{GS,dobler,dobler2017recurrence,ghosh,rosen} the case that the particles preform walks with a drift is considered). Beyond the Euclidean setup, there has been much interest in understanding the behavior of the model in the case that the underlying graph is a $d$-ary tree, either finite or infinite (we denote the infinite $d$-ary tree by $\TT$). More specifically, Itai Benjamini asked about the cover time and susceptibility of $\cT_{d,n}$ (see Problems \ref{o:1} and \ref{o:2} below) and about the existence of a phase transition for the model on $\TT$, as the density of walkers varies.  Until recently there was no progress in neither the finite nor infinite setups, which earned the problem its reputation as a hard problem. Finally, recently Hoffman, Johnson and Junge in a sequence of dramatic papers \cite{hoffman,hoffman2,johnson,hoffman3} showed that the frog model on $\TT$ indeed exhibits a phase transition as the initial state
becomes more saturated with particles. Namely, below a critical density of particles it is $\as$ transient, and above it is $\as$ recurrent. Johnson and Junge \cite{johnson} showed that the critical density grows linearly in $d$. We believe that the frog model should exhibit such a phase transition for all non-amenable graphs. See Conjecture \ref{con:nonam}.

Very recently, Hoffman et al.\ \cite{hoffman} showed that for $\la \ge Cd^2$ the frog model on $\TT$ is strongly recurrent (i.e.\ that the occupation measure of the origin at even times stochastically dominates some homogeneous Poisson process). As an application they proved sharp bounds (up to a constant factor) on $\CT(\cT_{d,n})  $ for $\la \ge Cd^2$. We discuss their results in more details later on. 

The purpose of this paper is to study the frog model on finite $d$-ary trees. We study two problems presented to us by I.~Benjamini \cite{Benjaminipc} (see also \cite[Open Question 5]{hoffman2}). In these two problems we seek an estimate which holds uniformly in the identity of the origin $\oo$.   
\begin{prob}
\label{o:1}
Show that for every $d \ge 2$  when $\la=1$ there exist some $C_{d},\ell >0$ such that \[\lim_{n \to \infty} \PP[\SS(\cT_{d,n})>C_d n^{\ell} ] = 0. \]
\end{prob}
\begin{prob}
\label{o:2} Show that for every $d \ge 2$  and $\la>0$ there exists some $f_{d,\la}:\N \to \N$ satisfying that\footnote{We write $o(1)$ for terms which vanish as $n \to \infty$ (or some other index, which is clear from context). We write $f_n=o(g_n)$ or $f_n \ll g_n$ if $f_n/g_n=o(1)$. We write $f_n=O(g_n)$ and $f_n \precsim g_n $ (and also $g_n=\Omega(f_n)$ and $g_n \succsim f_n$) if there exists a constant $C>0$ such that $|f_n| \le C |g_n|$ for all $n$. We write  $f_n=\Theta(g_n)$ or $f_n \asymp g_n$ if  $f_n=O(g_n)$ and  $g_n=O(f_n)$. When we have a sequence of functions $f^{(n)}:\{a_{n},a_{n}+1,\ldots,b_n\} \to \N$ we often suppress the dependence on $n$ from the notation and write $f(i) =O(g_i) $ (similarly, $f(i) \asymp g_i$, etc.)   to indicate that there exists some $C>0$ such that for all $n$ and $a_{n} \le i \le b_{n}$ we have that $f^{(n)}(i) \le C g_i $ (resp.\  $(1/C)g_i \le f^{(n)}(i) \le C g_i $).} $f_{d,\la}(n)=o(n^{\eps})$ for every $\eps>0$ such that \[\lim_{n \to \infty} \PP_{\la}[\CT(\cT_{d,n}) \le f_{d,\la} (|\cT_{d,n}|) ] = 1. \]
\end{prob}
Deterministically, $\CT(\cT_{d,n}) \ge n $. Conversely, $\CT(\cT_{d,n})$ can be bounded from above by the cover time of $\cT_{d,n} $ by a single SRW, which Aldous \cite{aldouscover} showed is $\asymp n^{2}d^n \log d $. To the best of the author's knowledge, the best previously known upper bound on $\CT(\cT_{d,n})$ is exponential in $n$, i.e.~polynomial in the volume of the tree, $|\cT_{d,n}| $ (see e.g.~the paragraph preceding Open Question 5 in \cite{hoffman2}). In light of the aforementioned phase transition it seems plausible that the answer to both Problem \ref{o:1} and \ref{o:2} may depend on $\la$. Our main result, Theorem \ref{thm:1}, resolves Problem \ref{o:1} by determining for all fixed $\la>0$ the (``typical" w.r.t.~$\PP_\la$) value of $\SS(\cT_{d,n})$ up to a constant factor. In particular, it asserts that  $\SS(\cT_{d,n})$ does not exhibit a phase transition w.r.t.~$\la$.  Theorem \ref{thm:2} provides an affirmative answer
to Problem \ref{o:2}. All estimates in Theorems \ref{thm:1} and \ref{thm:2} hold uniformly in the identity of the origin  $\oo $.
\begin{maintheorem}
\label{thm:1}
There exist some absolute constants $C,c>0$ such that for all $\la>0$ and $d \ge 2$  \[ \lim_{n \to \infty} \PP_{\la}[c \le \la \SS(\cT_{d,n})/(n \log n) \le C ] = 1. \]
In fact, as long as $ \la_n=o(n) $ and $\la_n \ge d^{-n}  n \log d $ we have that
\[ \lim_{n \to \infty} \PP_{\la_{n}}[   \SS(\cT_{d,n}) \ge c \sfrac{n}{\la_n}  \log \sfrac{n}{\la_n}] = 1. \]
Conversely, there exist some $c',C>0$ such that for all $d \ge 2$, if $ d^{-n}  n^2 \log d \le \la_{n} \le c' \log n $ then
\[ \lim_{n \to \infty} \PP_{\la_{n}}[   \SS(\cT_{d,n}) \le C \sfrac{n}{\la_n}  \log \sfrac{n}{\la_n}] = 1. \] 
\end{maintheorem}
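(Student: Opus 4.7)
I would prove matching upper and lower bounds on $\SS(\cT_{d,n})$, each relying on sharp SRW hitting-probability estimates on $\cT_{d,n}$. For the \emph{lower bound} set $t_{0} := c(n/\la)\log(n/\la)$ and fix a worst-case leaf $v^{*}$ with $d(\oo,v^{*})$ maximal. The key input is that SRW on $\cT_{d,n}$ has drift $(d-1)/(d+1)$ away from the root at each internal vertex, so a walk from $u$ hits a specific leaf $v^{*}$ at distance $r$ with probability $\lesssim d^{-r}$ (up to mixing corrections $\lesssim t \cdot d^{-n}$ that are subdominant). Combined with $|\{u:d(u,v^{*})=r\}|\lesssim d^{r-1}$ (since $v^{*}$ is a leaf, the $r$-sphere around it lies in an ``up-then-down'' cone), a naive first-moment bound over all particles is $\Theta(\la n/d)$, which is $\gg 1$; the saving must therefore come from incorporating the \emph{activation constraint}. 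Coupling the activation cascade with a branching process whose effective offspring mean depends on $\la t_{0}/n$, I would argue that only an $O(d/(\la n))$-fraction of particles are actually activated in time to reach $v^{*}$ during their lifetime of $t_{0}$, yielding an expected count of contributing particles that is $o(1)$. Poisson concentration then gives $\Pr[v^{*}\text{ unvisited}] \ge c > 0$, and applying this to $\Theta(d^{\eps n})$ mutually far-apart leaves (whose non-coverage events are approximately independent by the tree geometry) boosts the conclusion to w.h.p.

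For the \emph{upper bound} set $t^{*} := C(n/\la)\log(n/\la)$ and let $T_{k}$ be the first time every vertex of depth $\le k$ has been visited; the goal is $T_{n} \le t^{*}$ w.h.p. I would proceed by a level-by-level induction, aiming to show $T_{k+1} - T_{k} = O(1/\la)$ with very high probability. Once all depth-$k$ vertices have been visited, each is surrounded by $\Pois(\la)$ activated particles whose walks reach nearby children within $O(1)$ steps; by Poisson concentration and a Chernoff-style bound on the number of ``successful'' per-child attempts, each depth-$(k+1)$ vertex is activated within $O(1/\la)$ additional calendar time. Summing over $k$ gives $T_{n} = O(n/\la)$ on a favorable event. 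The extra factor $\log(n/\la)$ in $t^{*}$ provides the slack needed for concentration: a correlated analysis exploiting the hierarchical subtree structure, rather than a naive union bound over $d^{n}$ leaves, is what drives per-branch failure probabilities small enough to pass to w.h.p.\ coverage in the regime $\la \le c'\log n$.

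The \emph{main obstacle} is the lower bound's treatment of the activation constraint: the same SRW trajectories simultaneously drive the activation cascade and the potential hitting of $v^{*}$, so these events are genuinely coupled and naive independence-based first-moment arguments are invalid. A careful joint analysis, or a time-splitting argument that partitions the lifetime budget $t_{0}$ between an activation phase and a hitting phase, is required. The uniform-in-$\oo$ assumption additionally forces worst-case positions such as $\oo$ being a leaf, where the wavefront may have to traverse distance up to $2n$ and the initial cascade is starved for particles; handling this likely requires a separate initial-stage analysis via branching-random-walk domination. The parameter restrictions ($\la_{n} \le c'\log n$ for the upper bound, $\la_{n} = o(n)$ for the lower bound) reflect regimes where these techniques degrade, and outside of them the competition between $\la$ and the tree's branching factor $d$ would call for different methods.
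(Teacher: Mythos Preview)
Your upper-bound strategy has a fundamental flaw: you are bounding the cover time $\CT(\cT_{d,n})$ (your $T_n$ is exactly $\max_v \mathrm{AT}_\infty(v)$) and then using $\SS \le \CT$. But $\CT(\cT_{d,n}) \le C(n/\la)\log(n/\la)$ is \emph{false} for small $\la$: as the paper recalls (and Hoffman, Johnson, and Junge prove), for $\la \le d/100$ one has $\CT(\cT_{d,n}) \ge \exp(c_{d,\la}\sqrt{n})$ w.h.p. The level-by-level step ``$T_{k+1}-T_k = O(1/\la)$'' cannot hold: once the wave reaches the leaves of one subtree, the particles drift toward those leaves and take time exponential in the depth to return and explore a neighbouring subtree. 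Susceptibility is genuinely smaller than cover time here precisely because short-lived particles are \emph{prevented} from drifting away; the paper exploits this by working directly with lifetime $s = \lceil C_1(n/\la)\log(n/\la)\rceil$, partitioning $\cT_{d,n}$ into the $d^{n-k}$ induced subtrees of depth $k=\lceil\log_d s\rceil$, and running a recursion on the event that each such subtree has $\ge d^k/4$ ``good'' activated leaves (Definition~\ref{def:internally}, Proposition~\ref{p:rec}). A final sprinkling step (Corollary~\ref{cor:coverleaf}(iii)) then covers the remaining leaves. The SRW range estimate (Proposition~\ref{cor:range}) --- that a walk of length $s$ visits $\succsim s/\log_d s$ leaves of its current depth-$k$ subtree with uniformly positive probability --- is the engine of the recursion, and has no analogue in your sketch.

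Your lower-bound strategy also misses the point. You correctly observe that the naive first moment is too large and propose to recover the gap from the activation constraint, flagging this coupling as the ``main obstacle''. But the paper sidesteps the obstacle entirely: it proves the \emph{stronger} statement that even if every particle is activated at time $0$ and walks for $t_0 = c(n/\la)\log(n/\la)$ steps, some leaf remains unvisited w.h.p. The saving does not come from activation --- it comes from the return-time factor $m_{\cL}(t)\asymp \log_d t$ in the bound $\mu_a(t)\le 2\la t/m_{\cL}(t)$ (equation~\eqref{eq:eat1tree}), which makes $e^{-\mu_a(t_0)}\ge d^{-\eps n}$, and a second-moment argument on a well-separated subset $B_s\subset\cL$ (Proposition~\ref{prop:lower}(3)). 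For the full range of $\la_n$ the paper further reduces to a cover-time estimate for a walk on $\cL$ with periodic teleportation (Proposition~\ref{p:killledwalk}). Your claimed fraction ``only $O(d/(\la n))$ of particles are activated'' is also incorrect at the threshold $t_0$: by definition of $\SS$, the process is on the verge of covering, so most particles do get activated.
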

\begin{rem}
We believe that the condition that  $\la_{n} \le c'\log n $ in the last equation can be weakened to the condition that $ \la_n =o( n)$. Only minor adaptations to the proof are necessary in order to weaken the condition that  $\la_n \ge d^{-n}  n^2 \log d $ to $ \la_n \ge d^{-n}  n \log d$.
\end{rem}
\begin{maintheorem}
\label{thm:2}
There exists some $C>0$ such that for all $\la_{n}  \ge d^{-n}n^2 \log d$ and $d \ge 2$ \[ \lim_{n \to \infty} \PP_{\la_{n}}[  \CT(\cT_{d,n})\le Cn  \max \{1, \sfrac{1}{\la_n}  \log \sfrac{n}{\la_n}\}  3^{3\sqrt{ \log |\VV_{d,n}|  }} ] = 1. \]  
\end{maintheorem}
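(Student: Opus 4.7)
The strategy is to bootstrap from Theorem~\ref{thm:1} via a natural coupling between the $\tau=\infty$ and finite-lifetime frog processes, reducing the cover-time bound to controlling the length of the chain of activations that first reaches each vertex.

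Set $t^* := C_1 n \max\{1, \tfrac{1}{\la_n}\log(n/\la_n)\}$, with $C_1$ large enough so that, by Theorem~\ref{thm:1} (combined with the elementary monotonicity of $\SS(\cT_{d,n})$ in $\la$ to extend the upper bound to all $\la_n \ge d^{-n} n^2\log d$), the finite-lifetime frog process with lifetime $\tau=t^*$ visits every vertex of $\cT_{d,n}$ with high probability. Couple this process to the $\tau=\infty$ process using the same initial Poisson particle configuration and the same underlying infinite SRW trajectories for each particle (the finite process uses only the first $t^*$ steps of each walk after activation). A straightforward induction over the activation order shows that every activation in the finite process also occurs, and no later, in the infinite process. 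Therefore
\[
\CT(\cT_{d,n}) \le \max_{v \in \VV_{d,n}} T_v^{(t^*)},
\]
where $T_v^{(t^*)}$ denotes the first-visit time of $v$ in the lifetime-$t^*$ process.

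For each $v$, the first visit in the lifetime-$t^*$ process is witnessed by a chain of activations $w_{\mathrm{plant}} = p_0, p_1, \ldots, p_{L_v}$, where $p_{i-1}$ activates $p_i$ at a location reached within $t^*$ steps of $p_{i-1}$'s walk, and $p_{L_v}$'s walk visits $v$. Since each walk in the chain is of length at most $t^*$, we have $T_v^{(t^*)} \le (L_v+1)t^*$. It therefore suffices to show that with high probability
\[
\max_{v \in \VV_{d,n}} L_v \;\le\; 3^{(3-o(1))\sqrt{\log|\VV_{d,n}|}}.
\]

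The chain length would be bounded by a multi-scale analysis that exploits the exponential volume growth of the tree. Partitioning the tree into geometrically-spaced depth scales and comparing the chain's growth within each scale to a (sub-critical) branching process driven by the Poisson($\la_n$) supply of sleeping particles, I would aim for a log-normal tail of the form $\Pr[L_v \ge \ell] \le \exp(-c(\log \ell)^2)$ for each fixed $v$. A union bound over the $|\VV_{d,n}|$ targets then gives $\max_v L_v \le \exp(O(\sqrt{\log |\VV_{d,n}|}))$ with high probability, which converts to the $3^{3\sqrt{\log|\VV_{d,n}|}}$ factor.

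The main obstacle is the chain-length tail estimate. Chains on trees can have intricate geometry -- they can meander through and revisit different subtrees, split across sibling branches, and the walks of successive chain particles are correlated with the chain's past through the depletion of sleeping particles. I would address this by replacing an arbitrary activation chain by a canonical ``greedy'' progress chain that advances toward $v$ whenever a fresh activator is within a short walk, and by bounding the slowdown at each depth via a sub-Gaussian concentration inequality for SRW displacement on the tree together with the Poisson density of available particles.
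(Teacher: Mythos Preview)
Your reduction is sound: on the event $\SS(\cT_{d,n}) \le t^*$ one has $\CT(\cT_{d,n}) \le \max_v \mathrm{AT}_{t^*}(v) \le t^* \cdot \max_v L_v$, where $L_v$ is the minimal length of an activating chain to $v$ in the lifetime-$t^*$ process. The entire content of the theorem is then carried by the bound $\max_v L_v \le 3^{O(\sqrt{\log|\VV_{d,n}|})}$, and this is precisely the step you do not prove. Your proposed log-normal tail $\Pr[L_v \ge \ell] \le \exp(-c(\log\ell)^2)$ has no supporting argument; the ``sub-critical branching'' heuristic is pointing in the wrong direction (activation spreads, it does not die out), and the ``greedy progress chain'' idea presupposes that suitable particles are available along the path to $v$, which is exactly what must be established. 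You yourself flag this as the main obstacle, and it is a genuine one: the explicit chains produced by the proof of Theorem~\ref{thm:1} have length $2^{\Theta(n-k)}$, i.e.\ polynomial in $|\VV_{d,n}|$, because the recursion there doubles the chain length at each of $n-k$ levels. So any sub-polynomial chain-length bound would require new structure that is nowhere visible in Theorem~\ref{thm:1} used as a black box.

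The paper's proof does not pass through chain lengths at all. It works directly with time, via a multi-scale recursion in which the available walking time \emph{grows} geometrically: it sets $t_\ell = t_0 \cdot 3^\ell$ and volumes $s_{\ell+1} = s_\ell \cdot \sqrt{c\la t_\ell/\log|\VV_{d,n}|}$, and shows (using a frozen-particle coupling) that by time $t_\ell$ the induced tree $\cT_{\oo}(s_\ell)$ is conquered. The crucial point is that because $t_\ell$ grows with $\ell$, the volume ratio $s_{\ell+1}/s_\ell$ also grows, giving $\log s_\ell \succsim \ell^2$ and hence $\ell_* \le 3\sqrt{\log|\VV_{d,n}|}$. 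If the walk length were held fixed at $t^*$, the ratio $s_{\ell+1}/s_\ell$ would stay bounded and the number of scales would be of order $\log|\VV_{d,n}|$ rather than its square root --- this is exactly why your fixed-lifetime approach, even if carried out carefully, would not deliver the $3^{O(\sqrt{\log|\VV_{d,n}|})}$ factor. The growing time scale is the idea, not a technicality.
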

\begin{rem}
It follows from Theorem \ref{thm:2} that if $  \log  \max \{ \la_n^{-1},1 \} =O(\sqrt{ \log |\VV_{d,n}|}) $ then $\whp$\footnote{We say that a sequence of events $A_n$ defined with respect to some probabilistic model on a sequence of graphs $G_n:=(V_n,E_n) $ with $|V_n| \to \infty $ holds $\whp$ (``with high probability")  if the probability of $A_n$ tends to 1 as $n \to \infty$. } $  \log \CT(\cT_{d,n}) \le O(\sqrt{ \log |\VV_{d,n}|})$. 
\end{rem}  
It follows from the assertion of Theorem \ref{thm:1} that $\SS(\cT_{d,n}) $ does not exhibit a phase transition w.r.t.~$\la $. This is in contrast with the aforementioned results in \cite{hoffman,hoffman2,johnson,hoffman3} concerning $\TT$. Observe that the constants in Theorems \ref{thm:1} and \ref{thm:2} are independent of $d$ (however the term $\log |\VV_{d,n}|$ in the exponent in Theorem \ref{thm:2} obviously depends on $d$). This is somewhat surprising. To get the correct dependence (or more precisely, lack of dependence) on $d$ we obtain estimates on the transition probabilities $p^t(\bullet,\bullet) $ for SRW on $\cT_{d,n}$ which are sharp up to a constant factor, uniformly in $t$ (see Appendix \ref{s:aux}; In particular, Corollary \ref{c:mixing} asserts that the $L_{\infty}$ mixing time of the continuous-time SRW  on $\cT_{d,n}$ is $\asymp d^{n-1} \log d$, uniformly in $d$). We believe that these estimates are of self interest.
 
 In a previous version of this paper the author posed the following problem. 
 \begin{prob}
\label{o:4} Show that for every $d \ge 2$ there exist $\la_1(d) \le \la_2(d)$ and some $C_{d,\la},c_{d,\la},\eps_{d},\ell >0$ such that (regardless of the identity of the origin $\oo$) \[ \forall\, \la> \la_2(d), \quad \lim_{n \to \infty} \PP_{\la}[\CT(\cT_{d,n}) \le C_{d,\la} n^{\ell} ] = 1. \]
\begin{equation}
\label{e:p4r} \forall \la< \la_1(d), \quad \lim_{n \to \infty} \PP_{\la}[\CT(\cT_{d,n}) > c_{d,\la}2^{n^{\eps_{d}}} ] = 1.
\end{equation}
\end{prob}
It was suggested in that older version of this paper that $\la_2(d)$ might correspond to a value below which the occupation measure (for the frog model on $\TT$) converges to 0 pointwise, while above it, the joint distribution of the number of particles at each site converges to a non-trivial invariant measure. As mentioned earlier, the existence of a related phase transition was proved recently by Hoffman et al.\ \cite{hoffman3}. As an application, they showed that  $\whp$ $\CT(\cT_{d,n}) \le C_{d}\la^{-1} n \log n $ for $\la \ge C_{0}d^2$, while $\CT(\cT_{d,n}) \ge \exp(c_{d,\la}\sqrt{n} )$ for $\la \le d/100$, resolving Problem \ref{o:4}.\footnote{We use $C,C',C_0,C_1,\ldots$ (resp.~$\delta,\eps, c,c',c_0,c_1,\ldots $) to denote positive absolute constants which are sufficiently large (resp.~small) to ensure that a certain inequality holds. Similarly, we use $C_{d},C_{\la,d}$ (resp.~$c_{d},c_{\la,d}$) to refer to sufficiently large (resp.~small) positive constants, whose value depends on the parameters appearing in subscript. Different appearances of the same constant at different places may refer to different numeric values.} Note that by our Theorems \ref{thm:1} and \ref{thm:2}  for all fixed $\la$ we have that $\whp$ \[  \la^{-1} n \log n \le \CT(\cT_{d,n}) \le \exp(C\sqrt{n \log d} ),  \]
which by the results in \cite{hoffman3} is sharp up to a constant factor for $\la > C_0d^2$ and up to the value of the constant in the exponent for $\la \le d/100$. It is interesting to note that $\log \CT(\cT_{d,n})  $ is ($\whp$) of the same order for $\la=d/100 $ as it is for $\la = \la(n,d) =e^{- \sqrt{n \log d } }$.

\medskip

The following theorem offers two general lower bounds on the susceptibility of a regular graph. The second is defined in terms of $P$, the transition kernel of SRW on the underlying graph $G=(V,E)$ (which is clear from context).  
We denote the $t$-steps transition probability from $u$ to $v$ by $p^t(u,v):=P^t(u,v)$. \begin{maintheorem}
\label{thm:lower}
Let $L(n,\la):=\log n -4 \log \log n- \log ( \max \{\sfrac{1}{\la},1 \} ) $. For every finite regular simple graph\ $G=(V,E) $ and all $\la_{|V|}>0$ such that $ \lambda_{|V|}^{-1} ( \log |V|)^5 \le |V| $ we have that
\begin{equation}
\label{e:lower1}
\PP_{\la_{|V|}}[\la_{|V|} \SS(G) \ge L(|V|,\la_{|V|}) ] \to 1, \quad \text{as }|V| \to \infty.
\end{equation}
Moreover, for  all  $\delta \in (0,1)$ and all  $\la_{|V|}\gg |V|^{-\delta/7}   $  we have that
\begin{equation}
\label{e:lower2}
\PP_{\la_{|V|}}[\SS(G) \ge t_{\la_{|V|},\gd}(G) ] \to 1, \quad \text{as }|V| \to \infty,
\end{equation}
where  $t_{\la,\delta}(G):=\min \{s: \frac{2s \la}{\kappa_s}  \ge (1-\delta)\log |V| \} $ and
$\kappa_t:=\min_v \sum_{i=0}^{t}p^i(v,v)$.  
\end{maintheorem}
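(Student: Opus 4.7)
The plan is to lower bound $\SS(G)$ by coupling with a \emph{fully-active} process in which every particle is declared active at time $0$ (rather than being activated by contact). Since the range of the frog process with lifetime $t$ is contained in the range of this dominating process run for the same time, it suffices to show that the fully-active process fails to cover $V$ whp. In that process the walks are mutually independent, and by Poisson thinning, for each $v\in V$ the number of walks that visit $v$ within $t$ steps is $\Pois(\mu_v)$ with $\mu_v:=\la\sum_u \Pr_u[\tau_v\le t]$; writing $X_v:=\I[v\text{ is unvisited}]$, one has the clean joint formula
\[ \Pr\Big[\bigcap_{v\in A}\{X_v=1\}\Big]=\exp\Big(-\la\sum_u \Pr_u[\tau_A\le t]\Big),\quad A\subseteq V,\]
which gives closed-form expressions for every moment of $\sum_v X_v$.

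For \eqref{e:lower1} I would use the universal bound $\mu_v\le \la(t+1)$, a direct consequence of the regular-graph identity $\sum_u\sum_{i=0}^t p^i(u,v)=t+1$ (reversibility and $\pi\equiv 1/|V|$). Taking $t$ just below $L(|V|,\la)/\la$ yields
\[ \E\Big[\sum_v X_v\Big]\ge |V|\,e^{-\la(t+1)}\gtrsim \frac{(\log|V|)^4}{e^{\la}\min\{\la,1\}},\]
which diverges in the relevant regime. For the variance, $\mathrm{Cov}(X_u,X_v)=\E[X_u]\,\E[X_v]\,(e^{\la c_{uv}}-1)$ with $c_{uv}:=\sum_w \Pr_w[\tau_u\le t,\tau_v\le t]$; a strong Markov step gives $c_{uv}\le (t+1)(\Pr_u[\tau_v\le t]+\Pr_v[\tau_u\le t])$. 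I would split pairs into \emph{close} ones (where $\la c_{uv}$ may be $\Theta(1)$ and the trivial bound $\mathrm{Cov}(X_u,X_v)\le \min(\E X_u,\E X_v)$ is used, with the number of close pairs per vertex controlled by $\sum_v\Pr_u[\tau_v\le t]\le t+1$) and \emph{far} ones (where $e^{\la c_{uv}}-1\le 2\la c_{uv}$ and one uses $\sum_{u,v} c_{uv}\le |V|(t+1)^2$). The hypothesis $\la^{-1}(\log|V|)^5\le|V|$ is calibrated precisely so that $\mathrm{Var}(\sum_v X_v)=o\!\left(\E[\sum_v X_v]^2\right)$, and Chebyshev produces an uncovered vertex whp.

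For \eqref{e:lower2} the bound $\mu_v\le \la(t+1)$ is too weak when $\kappa_t\gg 1$, and I would sharpen it via the first-visit decomposition $G_t^{uv}=\sum_j \Pr_u[\tau_v=j]\,G_{t-j}^{vv}$, where $G_t^{vv}:=\sum_{i\le t}p^i(v,v)$. Combined with the monotonicity of $i\mapsto p^{2i}(v,v)$ valid for reversible chains (which gives $G_{t/2}^{vv}\ge\tfrac12 G_t^{vv}$), this yields $\Pr_u[\tau_v\le t/2]\le 2G_t^{uv}/G_t^{vv}$, and a dyadic iteration upgrades this to $\mu_v\le (2+o(1))\la t/\kappa_t$ uniformly in $v$. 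For any $t<t_{\la,\gd}(G)$ this forces $\mu_v\le(1-\gd)\log|V|$ and hence $\E[\sum_v X_v]\ge |V|^{\gd}$, while the covariance analysis is analogous to Part~1; the polynomial lower bound $\la\gg |V|^{-\gd/7}$ enters to absorb the losses introduced by the dyadic iteration when counting close pairs. The main obstacle throughout is this covariance step: at short graph-distance $\la c_{uv}$ can be as large as $\Omega(\log|V|)$ so that $e^{\la c_{uv}}-1\gg 1$, and one must argue that such pairs are too few to dominate the variance, relying only on the regularity of $G$ (and, for \eqref{e:lower2}, on the $\kappa_t$ statistic).
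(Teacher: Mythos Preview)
Your high-level strategy coincides with the paper's: couple to the fully-active process, use Poisson thinning to identify the law of ``$v$ unvisited'' as $e^{-\mu_v}$, bound $\mu_v\le\lambda t$ (for \eqref{e:lower1}) or $\mu_v\le 2\lambda t/\kappa_t$ (for \eqref{e:lower2}), and conclude by a second-moment argument. For the second bound, the paper proceeds more directly than you: it uses $\Pr_u[\tau_v\le t]\le e_{u,v}(2t)/e_{v,v}(t)$, which follows immediately from the first-visit decomposition without your monotonicity claim $G^{vv}_{t/2}\ge\tfrac12 G^{vv}_t$ (which is delicate when $P$ has negative eigenvalues) or any dyadic iteration.

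The genuine difference is the variance step. You propose a close/far split over all of $V$. The paper instead \emph{thins} $V$ to a subset $B_s$ by greedily deleting, for each surviving vertex $a$, all $b$ with $e_{a,b}(t)\ge 1/(st)$; since $\sum_b e_{a,b}(t)\le t+1$, at most $st^2$ vertices are deleted per survivor, so $|B_s|\ge |V|/(1+st^2)$. After this, \emph{every} pair in $B_s$ is far, and one gets the clean uniform bound $\mathrm{Var}(Z_{B_s})\le\E[Z_{B_s}]+(\E[Z_{B_s}])^2(e^{2\lambda/s}-1)$; it remains only to choose $s$ so that $\lambda/s\to0$ while $\E[Z_{B_s}]\to\infty$. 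Your close/far route is also viable, but be careful with the far pairs: bounding $\E[X_u]\E[X_v]\le1$ and then using $\sum_{u,v}c_{uv}\le|V|(t+1)^2$ loses a factor $e^{2\lambda t}\sim|V|^2/(\log|V|)^8$ and does not give $o(\E^2)$. You need either to retain one factor of $\E[X_u]$ when summing (using $\sum_u c_{uv}\lesssim(t+1)^2$ for each fixed $v$, which yields $\mathrm{Var}_{\mathrm{far}}\lesssim\lambda(t+1)^2\E$), or to set the close/far threshold at $\lambda c_{uv}=o(1)$ and balance against the resulting growth in the count of close pairs. The paper's thinning sidesteps this bookkeeping entirely.
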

\begin{rem}
Theorem \ref{thm:lower} seems to be especially useful when $G$ is vertex-transitive (Definition \ref{def: VT}; see Conjecture \ref{con:VT} for more on this point). The bound offered by \eqref{e:lower2} is sharp up to a constant factor in the cases that $G$ is either a $d$-dimensional torus ($d \ge 1$) of side length $n$ or a regular expander \cite{frogs2}.
Proposition \ref{prop:lower} gives a certain extension of \eqref{e:lower2}\footnote{By allowing one to take the minimum in the definition of $\kappa_t$ only w.r.t.~the leaf set.} which yields the lower bound on $\SS(\cT_{d,n})$ in Theorem \ref{thm:1} when $\la_n \ge d^{-(\sfrac{1}{3}-\eps)n}  $ for some $\eps \in (0,\sfrac{1}{3}] $. 
\end{rem}
Observe that the requirement $\lambda_{|V|}^{-1} ( \log |V|)^5 \le |V|$ ensures that $L(|V|,\la_{|V|}) \ge  \log \log |V|$. Provided that $\la_{|V|} \ge |V|^{-o(1)} $, we have that $L(|V|,\la_{|V|}) \ge (1-o(1))\log |V| $. 
We conjecture that (for a regular graph $G$) as long as $|V | \la_{|V|} \to \infty $, if $f(n) \gg 1$ then \[\PP_{\la_{|V|}}[ \la_{|V|}\SS(G) \ge  \log |V| -f(|V|) ] \to 1, \quad \text{as }|V| \to \infty.\] 

The argument in the proof of Theorem \ref{thm:lower} is inspired from those of Theorems 2 and 4.4 in \cite{SN}. It is possible to extend Theorem \ref{thm:lower} to the case that $G$ is non-regular. Adapting the argument from Theorem 4.3 in \cite{SN} gives
\begin{equation}
\label{eq: lower'}
\forall \alpha \in (0,1), \exists c_{\alpha}>0, \quad \PP_{\la}[\la \SS(G) \le c_{\alpha} \log |V|  ] \le \exp [-c'_{\la} r_{*}^2|V|^{\alpha}] , 
\end{equation}
where $r_*:=\min_{u,v \in V}  \frac{\deg (u)}{\deg (v)} $. This is meaningful as long as $r_* \ge |V|^{\beta-0.5}$ for some $\beta >0$. We do not prove \eqref{eq: lower'}.\footnote{The details involved in the translation of (1.2) in \cite{SN} to \eqref{eq: lower'} above are similar to the ones involved in the translation of Theorems 2 and 4.4 in \cite{SN} to Theorem \ref{thm:lower} above.}

We note that for every regular graph $G=(V,E)$ we have that $t_{\la,0}(G) \le C \la^{-2}\log^2| \la V| $ (e.g.~\cite[Lemma 2.4]{PS}), which is tight up to a constant factor as can be seen by considering the $n$-cycle, $\mathrm{C}_n $, for which $t_{\la,1/2}(\mathrm{C}_n) \ge c \la^{-2}\log^2| \la V| $. Theorem 1 in \cite{frogs2} asserts  that there exist $c_1,C_1>0$ such that    $\lim_{n \to \infty} \PP_{\la}[ c_{1} \le  \la^2 \SS(\mathrm{C}_n) /\log^{2} n \le C_{1}]=1$, for all fixed $\la>0$ (see \S~\ref{s:related} for more details on this point). We conjecture that the cycle is in some sense extremal (up to degree dependence). For further details see \S~3.3 in \cite{frogs2}. 

\begin{prop}
\label{prop:Kn}
Let $K_n$ be the complete graph on $n$ vertices. Let $(\la_n)_{n \in \N}$ be such that $\lim_{n \to \infty} \la_n n = \infty $. Then
\begin{equation}
\label{e:Knintro1}
\forall \eps \in (0,1), \quad \lim_{n \to \infty} \PP_{\la_{n}}[(1-\eps)\la_{n}^{-1}\log n  \le  \SS(K_n) \le \lceil (1+\eps)\la_{n}^{-1}\log n \rceil ]=1. \end{equation}
Moreover, there exists some $C>1 $ such that for every $(\la_n)_{n \in \N}$ such that $ \lim \sup_{n \to \infty } \la_n \le 2 $ 
\begin{equation}
\label{e:rem1.2}   \lim_{n \to \infty} \PP_{\la_{n}}[   \CT(K_n) \le C  \la_{n}^{-1} \log n ]=1,
  \end{equation}
while if $ \liminf_{n \to \infty} \la_n \ge 2 $ then  
\begin{equation}
\label{e:rem1.22}   \lim_{n \to \infty} \PP_{\la_{n}}[   \CT(K_n) \le C \lceil  \log_{\la_{n}} n \rceil]=1.
  \end{equation}
\end{prop}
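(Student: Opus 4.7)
The plan leverages the key feature of $K_n$: a single SRW step from any vertex $u$ is uniform on $V\setminus\{u\}$, so each step of an active walker is essentially an independent uniform sample from $V$. This reduces the frog-model analysis to coupon-collector-type estimates.

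For the lower bound on $\SS(K_n)$ in \eqref{e:Knintro1}, set $\tau:=\lfloor(1-\eps)\la_n^{-1}\log n\rfloor$; we may WLOG assume $\tau\ge 1$ and $\la_n\le\log n$, since otherwise the bound is vacuous as $\SS(K_n)\ge 1$. The plan is to stochastically dominate the frog process from above by an auxiliary process in which every sleeping particle is instantaneously activated at time $0$ and independently performs a $\tau$-step SRW from its home vertex; let $\RRR^{\dagger}$ denote the set of vertices touched by these walks, so that $\RRR_\tau\subseteq\RRR^{\dagger}$. Using the exact identity $\PP_u[\text{SRW never visits }v\text{ in }\tau\text{ steps}]=(1-\frac{1}{n-1})^\tau$ for $u\neq v$, independence across walkers, and the Poisson initial counts, a direct computation yields $\PP_{\la_n}[v\notin\RRR^{\dagger}]\asymp e^{-\la_n}n^{-(1-\eps)}$, so that $\bbE|V\setminus\RRR^{\dagger}|\to\infty$. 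A Chebyshev argument using the near-independence $\PP_{\la_n}[u,v\notin\RRR^{\dagger}]=(1+o(1))\PP_{\la_n}[u\notin\RRR^{\dagger}]\PP_{\la_n}[v\notin\RRR^{\dagger}]$ (which follows from the analogous pair-computation with two deleted vertices, since the error comes from the $O(\log n/n)$ discrepancy between $(1-\frac{2}{n-1})^{N\tau}$ and $(1-\frac{1}{n-1})^{2N\tau}$) then yields $V\setminus\RRR^{\dagger}\neq\emptyset$ w.h.p.

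For the upper bound in \eqref{e:Knintro1}, set $\tau:=\lceil(1+\eps)\la_n^{-1}\log n\rceil$ and argue in two stages. First, while at most half the vertices have been visited, each active walker hits an unvisited vertex with probability $\ge 1/2$ per step and activates Pois$(\la_n)$ fresh particles there; consequently the active population dominates (a scaled version of) a supercritical Galton--Watson process, and reaches $(1-\eta)\la_n n$ for any fixed $\eta>0$ within a time $T_\eta$ at most a fraction $c(\eta)<1$ of $\tau$. Second, on this event each of the $\ge(1-\eta)\la_n n$ activated walkers still has at least $(1-c(\eta))\tau$ steps remaining, and since their subsequent trajectories are mutually independent, the probability that a given $v$ is missed by all of them is at most $\bigl(1-\frac{1}{n-1}\bigr)^{(1-\eta)(1-c(\eta))\la_n n\tau}\le n^{-(1-\eta)(1-c(\eta))(1+\eps)}$. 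For $\eta$ small enough depending on $\eps$ we have $(1-\eta)(1-c(\eta))(1+\eps)>1$, and a union bound over $V$ gives $\RRR_\tau=V$ w.h.p.

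For the cover-time bounds, set $\tau=\infty$. When $\limsup\la_n\le 2$, the branching argument of the previous paragraph produces $\Omega(\la_n n)$ active particles within time $O(\la_n^{-1}\log(\la_n n))$; a parallel coupon-collector argument (using that $k$ independent SRWs on $K_n$ cover $V$ in time $O(n\log n/k)$) then yields \eqref{e:rem1.2}. When $\liminf\la_n\ge 2$, each active walker activates $\Omega(\la_n)$ fresh particles per step in the early phase, so the active population grows by a factor $\Omega(\la_n)$ per step and reaches $\Omega(\la_n n)$ within $O(\lceil\log_{\la_n} n\rceil)$ steps; a further $O(1)$ steps of this large bulk then cover any remaining vertices, giving \eqref{e:rem1.22}. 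The main obstacle throughout is matching the activation time and total-steps budgets precisely to extract the sharp constant $(1\pm\eps)$ in \eqref{e:Knintro1}; the constant-factor cover-time statements \eqref{e:rem1.2}--\eqref{e:rem1.22} are significantly easier since only orders of magnitude are required.
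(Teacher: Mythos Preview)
Your overall strategy matches the paper's: a coupon-collector lower bound via the ``all-awake'' stochastic domination, and a two-phase upper bound (spread, then cover). For the lower bound the paper is slightly slicker --- it observes that by symmetry one may relocate all $\mathrm{Pois}(\la_n n)$ non-planted particles to vertex $1$ without decreasing the probability of early cover, and then appeals directly to coupon-collector concentration, bypassing your second-moment computation; your route via $\bbE|V\setminus\RRR^\dagger|$ and Chebyshev is of course also valid.

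There is, however, a genuine imprecision in your upper-bound sketch. You assert that the active population reaches $(1-\eta)\la_n n$ ``within a time $T_\eta$ at most a fraction $c(\eta)<1$ of $\tau$''. If $T_\eta$ denotes \emph{absolute} time this is false: for bounded $\la_n$ the active population grows by a factor $\asymp(1+\la_n)$ per unit of absolute time, so reaching $\Theta(\la_n n)$ takes $\asymp\log(\la_n n)/\log(1+\la_n)$ steps, which for small fixed $\la_n$ \emph{exceeds} $\tau=(1+\eps)\la_n^{-1}\log n$. The correct bookkeeping --- carried out explicitly in the paper --- is in terms of \emph{per-particle budget}: one allots each non-initial particle only $M:=C\lceil\la_n^{-1}|\log\eps|\rceil$ of its $\tau$ steps to the spreading phase, runs an exploration/generation argument to show this already activates $(1-\eps/6)n$ sites, and then spends the remaining $\tau-M$ steps per particle (note $M/\tau=O(|\log\eps|/\log n)\to 0$) on the final coupon-collector cover. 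Your ``steps remaining'' language suggests you have this decomposition in mind, but the Galton--Watson estimate as written tracks the wrong clock. Note also that the GW domination only holds while $|\RR|\le n/2$; a separate (easy) coupon-collector step is needed to pass from $n/2$ to $(1-\eta)n$, which the paper handles and you elide. The cover-time bounds \eqref{e:rem1.2}--\eqref{e:rem1.22} then follow, as you say, by counting generations in this exploration.
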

In light of Theorem \ref{thm:lower} and Proposition \ref{prop:Kn}, it follows that $K_n$ is the regular graph with asymptotically the smallest susceptibility. This is consistent with the fact that
 $K_n$ is the regular graph with asymptotically the smallest cover time \cite{feige} and also the asymptotically smallest social connectivity time in the random walks social network model \cite{SN} (see \S~\ref{s:related}).

Observe that the dependence of $\SS(K_n)$ on $\la_n^{-1}$  is linear. As the number of particles is concentrated around $\la_n n$ (provided that $\la_n n \gg 1 $) it follows from \eqref{e:Knintro1} that the total number of steps performed by the particles when their lifetime is taken to be $\SS(K_n)$,  is roughly $n \log n$, the coupon collector time. 
\subsection{Organization of the paper}
In \S~\ref{s:construction} we present an explicit construction of the frog model and give more detailed definitions of $\SS(G)$ and $\CT(G)$. In \S~\ref{s:keySRW} we present two key auxiliary results concerning SRW on $\cT_{d,n}$, the $d$-ary tree of depth $n$. In \S~\ref{s:thm1}-\ref{s:thm2} we prove Theorems \ref{thm:1}-\ref{thm:2}, respectively. In \S~\ref{s:lower} we prove our general lower bounds on the susceptibility (Theorem \ref{thm:lower}). In \S~\ref{s:lowersustree} we prove the lower bound on $\SS(\cT_{d,n}) $ from Theorem \ref{thm:1}. In \S~\ref{s:Kn} we analyze the model on the complete graph and prove Proposition \ref{prop:Kn}. In \S~\ref{s:aux} we derive sharp heat kernel and hitting time estimates for SRW on $\cT_{d,n}$, which are then used in order to prove the two key results from \S~\ref{s:keySRW}.   
\subsection{Related models and further questions}
\label{s:related}

\medskip

In \cite{SN} Benjamini and the author study the following model for a social network, called the random walks social network model, or for short, the \emph{SN model}. Given a graph $G=(V,E)$,  consider Poisson($|V|$) walkers performing independent lazy simple random walks on $G$ simultaneously, where the initial position of each walker is chosen independently w.p.\ proportional to the degrees.
When two walkers visit the same vertex at the same time they are declared to be
acquainted. The social connectivity time, $\SC(G)$, is defined as the first time by which there
is a path of acquaintances between every pair of walkers.  The main result in  \cite{SN} is\  that when the maximal degree of $G$ is $d$, then $c \log |V| \le \SC(G) \le C_d \log^3 |V|$ $\whp $. Moreover, $\SC(G)$ is determined up to a constant factor in the cases that $G$ is a regular expander or a $d$-dimensional torus ($d \ge 1$) of side length $n$. 

Note that in the SN model all walkers are initially activated and we are not requiring the sequence of times at which some walkers met along a certain path of acquaintances to be non-decreasing. Hence one should expect the SN model to evolve much faster than the frog model, in a sense that (when $\la$  is fixed), for many graphs $\mathbb{E}[\SC(G) ] \ll\mathbb{E}[\CT(G) ]$ (one reason is the fact that  $\CT(G) \ge \mathrm{diameter}(G)$). We note that this fails when $G$ is either the complete graph, or a regular expander. In these cases both terms are of order $\log |V|$ (the expander case is in fact quite involved).

However, in many examples, even when  $\mathbb{E}[\SC(G) ] \ll\mathbb{E}[\CT(G) ]$, it is still the case that   $\mathbb{E}[\SC(G) ]$ and $\mathbb{E}[\SS(G) ]$ are of the same order, and several techniques from \cite{SN} can be applied successfully to the frog model. Namely, the same technique used in  \cite{SN}   to prove general lower bounds on $\SC(G)$ are used in the proof of Theorem \ref{thm:lower}. Moreover, the analysis of the two models on  expanders and on $d$-dimensional tori ($d \ge 1$) are similar (in all of these cases  $\SS(G)$ and  $\SC(G)$ are $\whp$ of the same order, \cite{frogs2,SN}).

In light of the above discussion the following conjecture is natural.
\begin{con}[Benjamini]\label{con:polylog} There exist some $C_{d,\la},\ell>0$ such that for every sequence of finite connected graphs $G_{n}=(V_{n},E_{n})$ with $|V_n| \to \infty$ we have that $\lim_{n \to \infty} \PP_{\la}[\SS(G_n) \le C_{d,\la}  \log^{\ell} |V_n| ]=1$.
\end{con}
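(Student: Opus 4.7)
The plan is to proceed via a multiscale ``doubling'' scheme, mirroring the iterative activation arguments used for the tree case of Theorem~\ref{thm:1}. Write $N=|V_n|$ and fix an exponent $\ell$ to be determined. I would build a sequence of $K=O(\log N)$ phases, each of lifetime $t_*=C_{d,\lambda}\log^{\ell'} N$ for some $\ell'$, such that at the end of phase $j$ the number of activated particles is at least $\min(2^{j},c\lambda N)$ with high probability. Phase~$0$ is the trivial initial state in which only $\plant$ is active.

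For the inductive step, suppose that at the start of phase $j$ there are $n_j\ge 2^j$ active particles. In phase $j$ I would let the newly activated walkers each take $t_*$ further steps and lower-bound the number of sleeping particles they encounter. Using standard heat kernel estimates on bounded-degree graphs together with a second-moment argument over the Poisson field of sleeping particles, the expected number of new activations is at least a constant times $\lambda\cdot n_j\cdot t_*/R_*(t_*)$, where $R_*(t):=\max_v\sum_{i\le t}p^i(v,v)$ is the worst-case return-kernel factor; on a bounded-degree graph $R_*(t)=O(\sqrt{t})$ in the worst case. The same mechanism used in the proof of Theorem~\ref{thm:lower}, borrowed from \cite{SN}, should give concentration of this count. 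Choosing $t_*$ of the form $(\log N/\lambda)^{O(1)}$ then doubles $n_j$ provided the process is not yet saturated, giving a susceptibility of order $\mathrm{polylog}(N)$.

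Once a constant fraction of the $\Theta(\lambda N)$ particles are activated, a separate coverage step is needed to show that every remaining vertex is hit within one more lifetime of polylog length. This is analogous to the coverage step in the proof of Theorem~\ref{thm:2}: with so many independent active walkers available, standard hitting-time tail bounds for bounded-degree graphs give that the worst vertex is hit with high probability within $O(\log^{\ell}N)$ steps. Crucially, because each newly activated particle receives a fresh lifetime of $t_*$, the susceptibility is controlled by $\max_j t_j$ rather than by $\sum_j t_j$, so the overall bound remains polylog.

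The main obstacle is controlling overlap. In graphs where the random walk is recurrent (paths, cycles, low-dimensional pieces) the active walkers revisit the same sites and the naive union bound on distinct visits is far from tight. The cycle example $\SS(\mathrm{C}_n)\asymp \lambda^{-2}\log^2 n$ shows that the doubling must be measured in terms of \emph{distinct} visited vertices rather than particle count, and it pins down $\ell=2$ as the smallest exponent consistent with the conjecture for bounded-degree graphs. A robust way to handle the overlap would be to decompose the graph into balls of radius $\sim \log N$ and carry out the doubling locally, with inter-ball spreading governed by a boundary-crossing argument; making such a scheme fully graph-independent while keeping $\ell$ depending only on $d$ and $\lambda$ is the crux of the difficulty, and is presumably why this conjecture remains open.
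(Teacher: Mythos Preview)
This statement is a \emph{conjecture} in the paper, not a theorem: the paper offers no proof, and indeed explicitly presents it as an open problem (attributed to Benjamini). So there is no ``paper's own proof'' to compare against, and your proposal should be read as a heuristic attack on an open question rather than a reconstruction of a known argument. You yourself concede as much in your last paragraph.

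That said, there are concrete gaps worth naming. First, the coverage step is unjustified: you claim that once a constant fraction of the $\Theta(\lambda N)$ particles are active, ``standard hitting-time tail bounds for bounded-degree graphs'' give that every remaining vertex is hit within $O(\log^\ell N)$ steps. No such bounds exist at this level of generality. On a $d$-ary tree, for instance, the expected hitting time of a fixed leaf from a uniformly random vertex is polynomial in $N$, and showing that $\Theta(\lambda N)$ walkers each walking for polylog time cover every leaf is already a nontrivial statement (it is essentially the content of \S\ref{s:lowersustree} of the paper, and the matching upper bound for trees requires the full machinery of Theorem~\ref{thm:1}). For arbitrary bounded-degree graphs nothing like this is known. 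Second, the doubling phase ignores geometry: measuring progress by the \emph{number} of active particles, rather than their spatial spread, fails on graphs with bottlenecks (dumbbells, trees with long paths attached, etc.), where a large active population can remain confined to a small region for a long time. Your proposed fix---a local decomposition into balls of radius $\sim\log N$---does not come with any mechanism for propagating activation across ball boundaries in polylog time on a general graph, and making that work is precisely the unresolved difficulty.
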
  
We suspect that one can take above $\ell=2$ and $C_{\la,d}=C \la^{-2}d^2 $ for some absolute constant $C>0$. Moreover, we suspect that for regular or vertex-transitive  $G$, one can even take above, respectively, $C_{\la,d}=C \la^{-2}d$ or  $C_{\la,d}=C \la^{-2}$ for some absolute constant $C>0$ (cf.~\cite[Conjecture 8.3]{SN}). 

 \begin{defn}
\label{def: VT}
We say that a bijection $\phi:V \to V$ is an automorphism
of a graph $G=(V,E)$ if $\{u , v \} \in E $ iff $\{ \phi(u) , \phi(v)\} \in E$. A graph $G$ is said to be \emph{vertex-transitive} if the action of its automorphisms group, $\mathrm{Aut}(G)$, on its vertices is transitive (i.e.~$\{\phi(v):\phi \in \mathrm{Aut}(G) \}=V $ for all $v$).
\end{defn}
\begin{con}
\label{con:VT}
Let  $G=(V,E)$  be a finite connected vertex-transitive graph.
Consider the cover time of $G$ by $m$ particles performing independent SRWs, each starting at a random initial position chosen from the uniform distribution on $V$ independently (where the cover time is defined as the first time by which every vertex is visited by at least one of the particles). Denote it by $\mathrm{Cov}_{m} $.  Then as long as $ \la |V| \gg 1  $ we have that \[\mathbb{E}_{\la}[\SS(G)] \asymp \mathbb{E}[\mathrm{Cov}_{\lceil \la |V| \rceil}] \asymp t_{\la}(G), \]
where  $t_{\la}(G):=\min \{s:2s/\kappa_s  \ge \la^{-1} \log |V| \} $ and
$\kappa_t:=\min_v \sum_{i=0}^{t}p^i(v,v)$. 

Finally, we conjecture that in the above setup $\SS(G) $ is concentrated around its mean (i.e.\ for a sequence of finite connected vertex-transitive graphs of diverging sizes $G_n=(V_n,E_n)$ and $\la_n \gg \sfrac{1}{|V_n|}  $\math $,  for all $\eps>0$ we have that $\mathbb{P}_{\la_n}[|\sfrac{\SS(G_{n})}{\mathbb{E}_{\la_n}[\SS(G_{n})]} -1| \ge \eps ] \to 0 $ as $n \to \infty$).
\end{con}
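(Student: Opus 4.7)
The plan is to treat the triple equivalence in pieces. The lower bound $\SS(G)\succsim t_\la(G)$ with high probability is already contained in Theorem \ref{thm:lower}, specifically \eqref{e:lower2}, so the outstanding work is the matching upper bound on $\SS(G)$, the two-sided bound on $\mathbb{E}[\mathrm{Cov}_{\lceil \la|V|\rceil}]$, and the concentration statement.

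For the multi-walker cover time $\mathrm{Cov}_m$ with $m:=\lceil \la|V|\rceil$ walkers started from the uniform distribution on $V$, vertex-transitivity gives that $\sum_{i=0}^s p^i(v,v)=\kappa_s$ is independent of $v$ and $p^s(u,v)=p^s(v,u)$. The expected number of visits to $v$ in $[0,s]$ by a uniform-start walker is $s/|V|$, and together with the standard second-moment / Kac-type identity one obtains $\frac{1}{|V|}\sum_u \PP_u[T_v\le s] \asymp s/(\kappa_s|V|)$ on vertex-transitive graphs. Consequently the expected number of vertices not visited by any of the $m$ walkers by time $s$ is $\asymp |V|\exp(-2\la s/\kappa_s)$, which equals $O(1)$ precisely at $s\asymp t_\la(G)$. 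A second-moment argument on the uncovered-vertex count—using that the covariances $\cov(\I\{T_u>s\},\I\{T_v>s\})$ are controlled by $\sum_{i\le s}p^i(u,v)$, which in turn is dominated by $\kappa_s$ via Cauchy--Schwarz and vertex-transitivity—yields the matching lower bound and delivers $\mathrm{Cov}_m\asymp t_\la(G)$.

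For the frog-model upper bound $\SS(G)\precsim t_\la(G)$, I would couple with the multi-walker model through a bootstrapping argument. Split the lifetime budget into $O(1)$ phases of length $t_\la(G)$. In phase 1, the planted particle alone activates a seed cluster of size $\Theta((\la|V|)^\alpha)$ for some $\alpha>0$, which follows by an expansion estimate around $\oo$ using vertex-transitivity to transfer local hitting information to arbitrary starting points. In each subsequent phase, treat the currently activated particles as independent walkers of lifetime $t_\la(G)$; the cover-time computation above then implies that the probability a given dormant particle is \emph{not} activated within the phase is bounded away from $1$, so the activated population grows by a constant factor each phase. After $O(1)$ phases essentially all $\la|V|$ particles are active and one final phase covers $V$. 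Concentration (part (c)) should follow from Chebyshev applied to the number of vertices still uncovered at time $(1+\eps)\mathbb{E}_\la[\SS(G)]$: its mean decays as $|V|^{-\Omega(\eps)}$ by the same computation, and its variance is controlled by the covariance bound above, which is again a vertex-transitive input.

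The main obstacle will be the ignition step of the upper bound: showing that a single planted particle kindles a macroscopic activated cluster within time $O(t_\la(G))$ on an \emph{arbitrary} vertex-transitive graph, with no additional structural assumption. When $\la$ is small or $\kappa_t$ grows slowly (e.g.\ on low-dimensional tori), the seed cluster expands sluggishly and one must carefully prevent activated particles from wastefully recolliding in the seed region instead of reaching dormant ones. A plausible route combines Varopoulos--Carne-type off-diagonal heat-kernel bounds with the quasi-concavity properties of $t\mapsto t/\kappa_t$ on vertex-transitive graphs, but turning such single-walker geometric information into a uniform bound on the activation front of the frog model is the technical heart of the conjecture and the reason it remains open.
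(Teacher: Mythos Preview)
The statement you are addressing is a \emph{conjecture}, not a theorem: the paper does not prove it. The only commentary the paper offers is the remark immediately following the conjecture, which records that the lower bound $\mathbb{E}_\la[\SS(G)]\succsim t_\la(G)$ is a consequence of Theorem~\ref{thm:lower} (exactly as you say), and that under transitivity the inequality $\mathbb{E}_\la[\SS(G)]\succsim \mathbb{E}[\mathrm{Cov}_{2\lceil\la|V|\rceil}]$ is ``fairly easy''. Neither the upper bound $\SS(G)\precsim t_\la(G)$, nor the asymptotic $\mathbb{E}[\mathrm{Cov}_m]\asymp t_\la(G)$, nor the concentration statement is established anywhere in the paper. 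So there is no proof to compare against; your write-up is a heuristic programme, and you yourself correctly flag in the final paragraph that the ignition step is the reason the problem is open.

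Two comments on the programme itself. First, your bootstrapping argument for the upper bound has a gap even before the ignition issue: from ``each dormant particle has probability bounded away from $1$ of remaining dormant during a phase'' you cannot conclude that the activated population grows by a constant factor per phase. That would require the \emph{activated} particles---not all $\la|V|$ walkers---to hit a constant fraction of dormant sites in time $t_\la(G)$, which fails badly when the activated set is still small. The growth rate of the activated cluster when it is of size $k\ll \la|V|$ is governed by $k$-walker cover-type quantities, not by $t_\la(G)$, and controlling this regime on a general vertex-transitive graph is precisely the hard part. Second, your second-moment sketch for $\mathrm{Cov}_m\asymp t_\la(G)$ is plausible in spirit but not a proof as written: the covariance bound you invoke via Cauchy--Schwarz is not sharp enough on its own to rule out a lower-order blow-up of the variance near the cover threshold; one typically needs a more careful treatment (e.g.\ thinning the target set as in Proposition~\ref{prop:lower}) to push the second moment through.
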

\begin{rem}
Recall that $\mathbb{E}_{\la}[\SS(G)] \succsim  t_{\la}(G)  $ by Theorem \ref{thm:lower} (when $\la$ does not tend to 0 too rapidly). We note that under transitivity the inequality $\mathbb{E}_{\la}[\SS(G)] \succsim \mathbb{E}[\mathrm{Cov}_{2\lceil \la |V| \rceil}] $ is fairly easy.
\end{rem}
In the $A+B \to 2B$ model, even when $p_A<1$, one can consider the case in which the $B$ particles have lifetime $t$ and initially only the particles at some vertex $\oo $ are of type $B$ (where an additional $B$ particle is planted at $\oo$). One can then define the susceptibility in an analogous manner, where the corresponding quantity is the minimal lifetime of a $B$ particle required so that every vertex is visited by a type $B$ particle. Alternatively, one can consider the minimal lifetime required so that all particles are transformed into part $B$ particles before the process dies out. We strongly believe that all of the results presented in this paper can be transferred into parallel results for the case $p_A<1$ (recall that $p_A=1$ is the frog model). Moreover, we also believe that the corresponding versions of  Conjectures \ref{con:polylog} and \ref{con:VT} are true also in the case of $p_A<1$.

In \cite{SN2} it is shown that the SN model on infinite graphs exhibits the following dichotomy. For transitive amenable graphs for every particle density a.s.\ every pair of particles eventually have a path of acquaintances between them. For non-amenable graphs of bounded degree there is always a (strictly positive) critical particle density above which the aforementioned behavior holds a.s. and below which it a.s.\ fails.
\begin{con}
\label{con:nonam}
Let $G$ be a connected non-amenable graph of bounded degree. Then there exists some $\la_{\mathrm{c}}>0$ such that for all $\la > \la_{\mathrm{c}}$ (resp.\ $\la< \la_{\mathrm{c}} $) the frog model on $G$ is $\mathbb{P}_{\la}$ a.s.\ recurrent (resp.\ transient). 
\end{con}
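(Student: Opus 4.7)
My plan is to define $\la_{\mathrm{c}}$ as the sharp threshold separating the two regimes and then verify that it lies strictly between $0$ and $\infty$. The argument has three ingredients: (i) monotonicity in $\la$ together with a $0$--$1$ law, so the dichotomy itself makes sense; (ii) transience for all sufficiently small $\la$ (yielding $\la_{\mathrm{c}}>0$); and (iii) recurrence for all sufficiently large $\la$ (yielding $\la_{\mathrm{c}}<\infty$).

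For (i), I would use the standard ``carpet'' coupling of the frog model, attaching to every potential particle an i.i.d.\ SRW trajectory and modulating the density by independent Bernoulli thinnings. Under this coupling the configuration at density $\la_1<\la_2$ is dominated vertex-by-vertex by that at $\la_2$, and since adding particles can only enlarge the set of activations, recurrence at $\la_1$ forces recurrence at $\la_2$. The event ``$\oo$ is visited infinitely often by an active particle'' is measurable with respect to an infinite product of i.i.d.\ inputs (Poisson counts, thinnings, walks) and is insensitive to any finite modification of these inputs, so Kolmogorov's $0$--$1$ law gives $\PP_{\la}[\text{recurrent}]\in\{0,1\}$, and one defines $\la_{\mathrm{c}}:=\inf\{\la>0:\PP_{\la}[\text{recurrent}]=1\}$.

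For (ii) I would exploit non-amenability in its spectral form: the SRW transition operator $P$ has $\ell^{2}$ norm equal to the spectral radius $\rho<1$, so the Green operator $\Gamma:=\sum_{t\ge 0}P^{t}$ satisfies $\|\Gamma\|_{\ell^{2}\to\ell^{2}}\le (1-\rho)^{-1}$. Let $M_v$ denote the expected total number of active-particle visits to $v$. The planted frog together with the $\mathrm{Pois}(\la)$ natural frogs at $\oo$ contribute $(1+\la)\Gamma(\oo,v)$, while for $u\ne\oo$ the frogs originally at $u$ contribute at most $\la\,\PP[u\text{ is visited}]\,\Gamma(u,v)\le \la M_u\,\Gamma(u,v)$ (the Poisson count $\eta_u$ is independent of $\{u\text{ visited}\}$ by the carpet construction, and $\PP[u\text{ visited}]\le M_u$ trivially). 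Summing in $u$ yields the fixed-point inequality
\[
M\le (1+\la)\,\Gamma\,\delta_{\oo}+\la\,\Gamma M,
\]
which is solvable by Neumann series in $\ell^{2}$ as soon as $\la<1-\rho$; in particular $M_{\oo}<\infty$, so $\oo$ is visited only finitely often by active particles almost surely.

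Step (iii) is the main obstacle and is the reason the statement is posed as a conjecture. On the regular tree $\TT$, Hoffman--Johnson--Junge \cite{hoffman,hoffman2,hoffman3} establish recurrence at large $\la$ via a self-improving bootstrap that relies on the isomorphism between the subtree rooted at each child of $\oo$ and the full tree: the intensity of a Poisson process stochastically dominated by the occupation measure at $\oo$ is bounded below by a functional of i.i.d.\ copies of that intensity on the child-subtrees, and a fixed-point inequality forces it to be strictly positive. A general bounded-degree non-amenable graph has no such exact self-similarity. The most plausible substitute I see is a block renormalization: fix a large radius $R$, show that for $\la$ sufficiently large the event ``a single activated cluster in $B(v,R)$ activates a linear fraction of the frogs in every neighbouring ball $B(w,R)$'' has probability close to $1$ uniformly in $v$, and iterate along a percolating tree of blocks using the non-amenable expansion $|\partial B|\ge c|B|$. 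A renewal/Borel--Cantelli argument along excursions of the planted particle to $\oo$ would then supply the infinitely many returns. Quantifying the near-independence of the block events in the absence of tree self-similarity, and controlling double-counting from overlapping trajectories, is the genuinely hard step.
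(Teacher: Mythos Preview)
The statement is labelled a \emph{conjecture} in the paper and no proof is offered there; it is raised as a natural expectation, motivated by the phase transition on $\TT$ established in \cite{hoffman,hoffman2,johnson} and by the dichotomy for the SN model on non-amenable graphs from \cite{SN2}, but it is left entirely open.

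Your write-up reflects this honestly: you yourself flag step~(iii) --- recurrence for large $\la$ on a general bounded-degree non-amenable graph --- as ``the main obstacle and the reason the statement is posed as a conjecture,'' and you do not claim to have resolved it. So there is nothing to compare on that front; neither you nor the paper proves the recurrence side.

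Two technical remarks on the parts you do sketch. Your step~(ii) is essentially sound: the fixed-point inequality $M\le (1+\la)\,\Gamma\delta_{\oo}+\la\,\Gamma M$ together with $\|\Gamma\|_{\ell^{2}\to\ell^{2}}\le (1-\rho)^{-1}$ does yield $M_{\oo}<\infty$ for $\la<1-\rho$ and hence a.s.\ transience at small $\la$; this is a branching-envelope comparison in the spirit of \cite{telcs1999branching}. Step~(i) is more delicate than you indicate. The event ``$\oo$ is visited infinitely often'' is \emph{not} transparently a Kolmogorov tail event: a finite modification of the inputs (e.g.\ deleting all particles at a single vertex $v$) can sever an unbounded collection of activation chains that all route through $v$, so insensitivity to finite changes is not automatic. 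On $\Z^d$ and on regular trees the $0$--$1$ law is a genuine theorem with its own argument; on an arbitrary non-amenable graph with no transitivity it is part of what would need to be proved rather than a free consequence of the construction.
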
 

\subsection{Notation}
\label{s:notation}

Let $G=(V,E)$ be some finite graph. For SRW on a graph $G$, the \emph{hitting time} of a set $A \subset V$ is $T_A:=\inf \{t \ge 0 :X_t \in A \}$. Similarly, $T_A^+:=\inf \{t \ge 1 : X_t \in A \} $. When $A=\{x\}$ is a singleton, we instead write $T_x$ and $T_x^+$.  Let $P$ be the transition kernel of SRW on $G$. We denote by $p^t(u,v):=P^t(u,v)$ the $t$-steps transition probability from $u$ to $v$. We denote by $\Pr_u$ the law of the entire walk, started from vertex $u $. We denote the stationary distribution by $\pi$. The same definitions apply when we consider an arbitrary Markov chain on a finite or countable state space, rather than SRW.

\medskip

Consider the frog model on $G=(V,E)$ with particle density $\la$ and lifetime $\tau$. We denote the collection of vertices which are visited by an active particle before the process corresponding to lifetime $\tau$ dies out by  $\RR_{\tau}$.  Denote the collection of particles whose initial position belongs to a set $U \subseteq V $ (resp.~is $v \in V$) by $\W_U$ (resp.~$\W_v$). Then $(|\W_v|-1_{v = \oo})_{v:v \in V }$ are i.i.d.~$\Pois(\la)$. Denote the collection of all particles by $\W=\W(V)$. Denote the range of the length $\ell$ walk picked (in the sense of \S~\ref{s:construction}) by a particle $w$  by $\RR_{\ell}(w)$. We denote the union of the ranges of the length $\ell$ walks picked  by the particles belonging to some set of particles $\mathcal{U} \subseteq \W $ by $\RR_{\ell}(\mathcal{U}) $. Denote the union of the ranges of the length $\ell$ walks picked  by the particles whose initial positions lie in $U \subseteq V $ (resp.~is $v$) by $\RR_{\ell}(U):=\RR_{\ell}( \W_U)$ (resp.~$\RR_{\ell}(v):=\RR_{\ell}( \W_v)$). For every event $A$ we denote its complement by $A^c$.  

\medskip

The distance $ \dist(x,y)$ between vertices~$ x
$ and~$ y $ is the number of edges along a shortest path from $ x $ to $ y $. Vertices
are said to be neighbors if they belong to a common edge. We denote $[k]:=\{1,2,\ldots,k \}$ and $]k[:=\{0,1,\ldots,k \}$. We denote the cardinality of a set $A$ by $|A|$. By abuse of notation, for a finite tree $\cT=(V,E)$ we write $|\cT|$ and $v \in \cT$ instead of $|V|$ and $v \in V$, respectively. 
We write \emph{w.p.}~as a shorthand for ``with probability".

\medskip

  We denote the $d$-ary tree of depth $n$ by  $\mathcal{T}_{d,n}=(\VV_{d,n},\EE_{d,n})$ (when $d$ and $n$ are clear from context we write  $(\VV,\EE)$). We denote its root by $\mathbf{r}$. Denote by $\cL_i$ the $i$th level of the tree and the leaf set by $\cL$. For $x \in \VV_{d,n}$ we denote the distance of $x$ from the root and from the leaf set, respectively, by \[|x|:=\dist(x,\mathbf{r}) \text{ (i.e.~$x \in \cL_{|x|}$) } \quad \text{and}\quad \| x \|:=n-|x|=\dist(x,\cL). \]  
The root induces the following partial order, $\le$, on $\VV_{d,n}$.
We write $x \le y$ if the (unique) path from $y$ to the root goes through $x$. If $x \le y$ we say that $x$ is the $\dist(x,y)$th \emph{ancestor} of $y$. If $x \le y$ and $\dist(x,y)=1$, we say that $x$ is the \emph{parent} of $y$. We denote the $i$th ancestor of a vertex $x$ by $\overleftarrow{x_i} $. For  $x,y \in \VV_{d,n}$ we denote by $x \wedge y $ their first common ancestor (the vertex $z$ such that $z \le x$, $z \le y$ and $|z|$ is maximal). The \emph{induced tree} at $x $, denoted by $\mathcal{T}_{x}$, is the induced tree on the set of $\{y:x \le y \}$. For $x \in \VV_{d,n}$ and $| \mathcal{T}_{x}| \le t \le |\VV_{d,n}| $ we denote by $\cT_x(t)$ the smallest induced tree $\cT_y $ such that $y \le x$ and $|\cT_y| \ge t$. Note that $\cT_x(t) $ is uniquely defined!   
We reserve the term ``induced tree" for trees of the form $\cT_x$ as these are the only type of subtrees we shall consider.

\medskip

Apart from $\cT_{d,n}$ we also consider some of its induced trees and the infinite $d$-regular tree $\TT:=(V(\TT),E(\TT)) $. When referring to some tree $\cT$ other than $\cT_{d,n}$, we write $\cL_i(\mathcal{T})$ and  $\cL(\mathcal{T})$ for the $i$th level and the leaf set of $\mathcal{T} $, respectively.  We denote the law of SRW on a tree $\cT$ (other than $\cT_{d,n}$) started from vertex $v $ by $\Pr_v^{\cT}$ and similarly, denote the transition probabilities of such a walk by $P_{\cT}^t(\cdot,\cdot) $.

\section{A formal construction of the model}
\label{s:construction}
In this section we present a formal  construction of the frog model. In particular  the susceptibility and cover time are defined explicitly in \eqref{e:SandCT}. Also, in what comes we shall frequently refer to ``the walk picked by a certain particle". This notion is explained in the below construction. We also recall the notion of Poisson thinning, which is used repeatedly throughout the paper.

Clearly, in order for the susceptibility to be a random variable, the probability space should support the model simultaneously for all particle lifetimes. In order to establish the fact   that the laws of $\SS(\GG)$ and $\CT(\G) $ are stochastically decreasing in $\la$,
 below we show that it can be taken to  support the model simultaneously also for all particle densities. As this is a fairly standard construction, most readers may wish to skim this subsection. 

 We denote the set of  $\Pois(\la)$ (or $1+\Pois(\la)$ for the origin)  frogs occupying vertex $v$ at time 0 by $\W_{v}=\{w_1^v,\ldots w_{|\W_v|}^v \}$. We can assume that at time 0 there are infinitely many particles occupying each site $v$, $\mathcal{J}_v:=\{w_i^v:i \in \N \}$ (where $w_i^v$ is referred to as the $i$-th particle at $v$), but only the first $|\W_{v}|$ of them are actually involved in the dynamics of the model. We may think of each particle $w_i^v \in \mathcal{J}_v $ as first \emph{picking}  an infinite SRW $(S_t^{v,i})_{t \in \Z_+}$ according to $\Pr_v$ and only in the case that $i \le |\W_v| $  and $v $ is visited by some active particle, say at time $s$ (for the first time), does $w_i^v$ actually performs the first $\tau$ steps of the SRW it picked (i.e.~its position at time $s+t$ is $S_t^{v,i}$ for all $t \in ] \tau [$).  

\medskip

Let $G=(V,E)$ be a graph.  A \emph{walk} of length $k$ in $G$ is a sequence of $k+1$ vertices $(v_0,v_1,\ldots,v_{k})$ such that for all $0 \le i <k$ or $\{ v_{i} , v_{i+1}\} \in E $. Let $\Gamma_k$ be the collection of all walks of length $k$ in $G$. We say that $w_i^v \in \W_v$ \emph{picked} the path  $\gamma=(\gamma_0,\ldots,\gamma_k) \in \Gamma_{k}$ if $S_t^{i,v}=\gamma_t $ for all $t \in ]k[ $. For each $\gamma \in \Gamma_k $ let $X_{\gamma}$ denote the number of particles in $\W_{\gamma_0} $, other than the particle planted at the origin, which picked the walk $\gamma$.  For a walk $\gamma=(\gamma_0,\ldots,\gamma_k) \in \Gamma_{k}$ for some $k \ge 1$, we denote $p(\gamma):=\prod_{i=0}^{k-1}P(\gamma_i,\gamma_{i+1}) $. By Poisson thinning, we have that for every fixed $k$, the joint distribution of $(X_{\gamma} )_{\gamma \in \Gamma_k}$ (under $\PP_\la$) is that of independent Poisson random variables with $\mathbb{E}_{\la}[X_{\gamma}]=\la p(\gamma)$ for all $\gamma \in \Gamma_k$. 

\medskip

 Consider a collection   $((M_v(s))_{s \ge 0})_{v \in \VV}$  of homogeneous Poisson processes on $\R_{+}$ with rate $1$ and a collection of simple random walks on  $ \GG $,  $ \{ (S^{v,i}_t)_{t \in \Z_+}:v \in \VV, i \in \mathbb{N} \} $, where for all $i $ and $x$, the walk picked by the $i$th particle in $\mathcal{J}_x$ is $ (S^{x,i}_t)_{t \in \Z_+}$.  We take the walks and the Poisson processes to be jointly independent.  We take $|\W_u^{\la}| :=M_u(\la)+1_{u=\oo}$, where $\W_u^{\la}$ denotes the set of particles involved in the dynamics, whose initial position is $u$, when the density is taken to be $\la$. We take $\W_u^{\la}$ to be the collection of the first $|\W_u^{\la}|$ particles in  $\mathcal{J}_u$.    When clear from context, we omit the superscript $\la$ and write $\W_u$.  From this construction it is clear that the laws of $\SS(\GG)$ and $\CT(\G) $ are stochastically decreasing in $\la$ (since if $\la < \la' $, then for all $u$ we have that $ \W_u^{\la} \subseteq \W_u^{\la'} $).

  For $ x,y \in \VV(\GG) $ such that $ x\neq y $ and $\tau \in \N \cup \{\infty \}$ let
\begin{equation}
\label{e:elltau}
 \ell_{\tau}(x,y) := \inf\{j \le \tau : S_j^{x,i} = y \quad \text{for some} \quad i \le |\W_x| \}
\end{equation}
 (where $\inf \eset = \infty $). The \emph{activation time} of~$x$ (and also of $\W_x$) w.r.t.~lifetime $\tau$ is 
\begin{equation}
\label{e:AT}
\mathrm{AT}_{\tau}(x) := \inf\{\ell_{\tau}(x_0,x_1)+\cdots+\ell_{\tau}(x_{m-1},x_m)\},
\end{equation}
where the infimum is taken over all finite sequences $x_0=\oo, x_1,
\ldots, x_{m-1},x_m=x$ such that $ x_i \in \VV(\GG) $ for all $i$. The event
$\mathrm{AT}_{\tau}(x)=\infty$ is precisely the event that (for lifetime $\tau$) site~$x$ is never visited by an
active particle, while when finite, $\mathrm{AT}_{\tau}(x)$ is the first time at which vertex $x$ is visited by an active particle (for lifetime $\tau$).  The \emph{cover time} and \emph{susceptibility} of $\GG$ can be defined as
\begin{equation}
\label{e:SandCT}
\mathrm{CT}(\GG):=\max_{v \in \VV}\mathrm{AT}_{\infty}(v) \text{ and }\SS(\GG):=\inf \{\tau:\max_{v \in \VV}\mathrm{AT}_{\tau}(v)<\infty \}. 
\end{equation}
\section{Key auxiliary results concerning SRW on $d$-ary trees}
\label{s:keySRW}
In this section we state two key auxiliary results concerning SRW on $\cT_{d,n} $. Recall that $\|y\|$ is the distance of $y$ from the leaf set $\cL$. Recall that $x \wedge y$ is the common ancestor of $x$ and $y$ which is closest to $\cL$. Recall that for a tree $\cT$ its leaf set is denoted by $\cL(\cT)$. Recall that  $\cT_x(m)$ the smallest induced tree $\cT_y $ such that $y \le x$ and $|\cT_y| \ge m$.  Observe that if $\|y\| \le k $ then $\cT_y(d^k)$ is of depth $k$.

Part (i) of the following proposition assets that for every set  $A$ of size at least $\Omega (d^k) $ which is a subset of the leaf set of some fixed induced subtree (of $\cT_{d,n}$) of depth $k \le n $,  the probability that a walk of length $t \in  [d^{k-1},d^{k}] $  with initial state inside this subtree will terminate at this induced subtree and will visit $\Omega ( t/\log_d (dt)) $ vertices of $A$ is  bounded from below uniformly in $k,t,A$ and in the initial position of the walk inside the induced subtree.
\begin{prop}
\label{cor:range}
 Let $R_t:=\{X_i : 0 \le i \le t \}$. Let $k \le n$. Let $t \in [d^{k-1},d^{k}]$.  Denote \[g(t):= t/\log_d (dt). \] There exist $c,c_{0},c'>0$ such that the following hold for all $d \ge 2$ and $n \ge M$.
\begin{itemize}
\item[(i)]

For every $\delta \in (0,1)$, $y \in \VV_{d,n}$, $t \in [d^{k-1},d^{k}] $ such that   \[d^{\|y\|-1}+8\|y\| \le t \le d^{n} \] and $A \subseteq \cL(\cT_y(d^{k}))$ of size $|A| \ge \delta d^{k} $, for all $z \in \cT_y$ we have that
\begin{equation}
 \label{eq: Range3}
\begin{split}&  \Pr_z[|R_{t} \cap A |>c' \delta g(t) \text{ and }X_{t} \in \cT_y(t)  ] \ge c\delta^{2}.
 \end{split}
\end{equation}
\item[(ii)]
Let $k \le i \le n  $ and  $x,y  \in \cL_{n-i}$ be such that $32 \|x \wedge y\| \le d^{k}$. Let $t \in [32 \|x \wedge y\|,d^{k}]$.  Let $\cT_{y,k}^1,\ldots,\cT_{y,k}^{d^{i-k}} $ be the collection of all induced subtrees of $\cT_y$ of depth $k$. Then for all $z\in \cT_{x}$ we have that 
\begin{equation}
 \label{eq: Range3'}
 \begin{split}
& \Pr_z[\max_{\ell \in [d^{i-k}] } |R_{t} \cap \cT_{y,k}^{\ell} |> c'g(t)   \text{ and }X_{t} \in \cT_y  ] \ge \frac{c_{0}t}{d^{2\|x \wedge y\|-\|y\|-1}}.
\end{split}
\end{equation}
 \end{itemize}
\end{prop}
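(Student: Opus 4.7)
For Part (i), my plan is to estimate $\E_z[|R_t\cap A|]$ via the appendix's heat-kernel bounds on $\cT_{d,n}$, upgrade to a positive-probability event via a second-moment argument, and then adjoin $\{X_t\in\cT_y(t)\}$ using the downward drift of SRW on the tree. The condition $d^{\|y\|-1}\le t$ places $t$ at or above the mixing time of $\cT_y(d^{k})$ (so the walk has time to distribute roughly uniformly across the relevant leaves), while $8\|y\|\le t$ supplies enough time for the walk to navigate within the ancestor subtree $\cT_y(d^{k})$ from $z$. Granted these, the heat-kernel estimates from \S~\ref{s:aux} yield $\Pr_z[v\in R_t]\succsim g(t)/d^{k}$ uniformly in $v\in\cL(\cT_y(d^{k}))$---the $1/\log_d(dt)$ factor in $g$ reflecting the local revisits per excursion to the leaf level---so $\E_z[|R_t\cap A|]\succsim \delta g(t)$. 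For the second moment I would bound $\Pr_z[u,v\in R_t]$ by conditioning on the first hit among $\{u,v\}$ and re-applying the heat-kernel bound to the remaining walk; the tree's hyperbolic geometry makes the pair probability nearly multiplicative and yields $\E_z[|R_t\cap A|^{2}]\precsim \delta^{-1}\E_z[|R_t\cap A|]^{2}$, so Paley--Zygmund produces $\Pr_z[|R_t\cap A|\ge c'\delta g(t)]\succsim \delta^{2}$, the exact exponent in the statement. To adjoin $\{X_t\in\cT_y(t)\}$ I split the walk at $t/2$: the downward drift of SRW on $\cT_{d,n}$ gives $\Pr[X_{t/2}\in\cT_y(t)]\succsim 1$ from any state in $\cT_y(d^{k})$, and combining with the range event on the first half via the strong Markov property yields both simultaneously with probability $\succsim \delta^{2}$.

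For Part (ii) I decompose the walk at $s:=\lceil t/2\rceil$, writing $h:=\|x\wedge y\|$ and $i:=\|y\|$. Starting at $z\in\cT_x$, the first phase asks that the walk enter some depth-$k$ subtree $\cT_{y,k}^{\ell}\subseteq\cT_y$ by time $s$; using the heat-kernel bounds from \S~\ref{s:aux} this event has probability $\succsim c_{0}t/d^{2h-i-1}$. The shape of this bound is natural: the walk must traverse a distance $\asymp 2h-i$ (up $|z|-(n-h)$ levels to $x\wedge y$, then down $h-i+k$ levels into the bottom of $\cT_y$); the downward drift of SRW on $\cT_{d,n}$ makes the relevant transition density proportional to $d^{-(2h-i-1)}$; and the prefactor $t$ arises from $\Theta(t)$ effective time-attempts at the transit (equivalently, from integrating the heat kernel in a Green-function sense up to time $s$). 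Conditional on $X_s\in\cT_{y,k}^{\ell}$ I apply Part (i) to the subsequent walk of length $t-s$, with the role of ``$y$'' played by an appropriately chosen ancestor of $X_s$ inside $\cT_{y,k}^{\ell}$ (so the new $\|y\|$ is of order $k_{0}$ with $t-s\asymp d^{k_{0}}$ and the condition $d^{k_{0}-1}+8k_{0}\le t-s$ follows from $t\ge 32h\ge 32k$), and $A:=\cL(\cT_{y,k}^{\ell})$ restricted to a matching sub-subtree so $\delta\succsim 1$. Part (i) then delivers constant conditional probability of both $|R_{t-s}\cap\cL(\cT_{y,k}^{\ell})|>c'g(t-s)\asymp g(t)$ and $X_t\in\cT_{y,k}^{\ell}\subseteq\cT_y$, and multiplying the phases yields the stated bound.

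The hardest part will be the sharp heat-kernel/hitting-time machinery on $\cT_{d,n}$ with constants uniform in $d\ge 2$ in the intermediate time regime $[d^{k-1},d^{k}]$ (below the mixing time of $\cT_y(d^{k})$ when $\|y\|<k$, above it otherwise). Specifically, I need (a) the per-leaf visit probability $\succsim g(t)/d^{k}$ with the correct $\log_d(dt)$ factor accounting for local revisits per leaf-level excursion, and (b) the transit probability $\succsim t/d^{2h-i-1}$ with a sharp distance factor that separates $\|y\|$ from $\|x\wedge y\|$. These are precisely the technical inputs developed in \S~\ref{s:aux}; once they are in hand, the first/second-moment bookkeeping, the Paley--Zygmund step, and the two-phase decomposition are all relatively routine.
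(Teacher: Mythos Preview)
Your outline for Part~(ii) matches the paper's approach closely: first pay $\succsim t d^{-(2\|x\wedge y\|-\|y\|-1)}$ to reach $\cL(\cT_y)$ by time $t/2$, then run the range argument inside the depth-$k$ subtree the walk lands in. That part is fine.

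In Part~(i) there are two substantive differences from the paper, and one of them is a genuine gap. First, the second-moment step: you propose to bound $\E_z[|R_t\cap A|^2]$ via pair probabilities $\Pr_z[u,v\in R_t]$ and claim $\E_z[|R_t\cap A|^2]\precsim \delta^{-1}(\E_z[|R_t\cap A|])^2$. That sharper estimate is not obvious---it would require summing $\Pr_u[T_v\le t]$ over $v\in A$ with control depending on $\|u\wedge v\|$, and the answer depends delicately on how $A$ is spread across the leaf set. The paper sidesteps this entirely: it bounds $|R_t\cap A|\le |R_t\cap\cL|$ and controls $\E_z[|R_t\cap\cL|^2]\precsim g(t)^2$ by the submultiplicativity $\Pr_z[|R_t\cap\cL|\ge s_1+s_2]\le \Pr_z[|R_t\cap\cL|\ge s_1]\Pr_z[|R_t\cap\cL|\ge s_2]$, which is a one-line strong-Markov argument. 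With first moment $\succsim \delta g(t)$ and second moment $\precsim g(t)^2$ (no $\delta$), Paley--Zygmund gives exactly $\succsim\delta^2$. (Incidentally, your claimed bound would give $\succsim\delta$ via Paley--Zygmund, not $\delta^2$; that is stronger, but also harder to justify.)

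Second, and this is the real gap: your plan for adjoining $\{X_t\in\cT_y(t)\}$ by splitting at $t/2$ does not work as written. You want to combine the range event on $[0,t/2]$ with $\Pr_{X_{t/2}}[X_t\in\cT_y(t)]\succsim 1$, but the latter only holds when $X_{t/2}\in\cT_y(d^k)$, and the range event alone gives you no control over the location $X_{t/2}$---for $t\asymp d^k$ the walk has constant probability of having exited $\cT_y(d^k)$ by time $t/2$. So you would need $\Pr[\text{range event on }[0,t/2]\text{ and }X_{t/2}\in\cT_y(d^k)]\succsim\delta^2$, and that is precisely the joint estimate you are trying to avoid. The paper handles this differently: it applies Paley--Zygmund directly to $|R_t\cap A|\cdot 1\{X_t\in\cT_y(d^k)\}$, after showing by an explicit coupling that the walk conditioned on $X_t\in\cT_y(d^k)$ can be coupled with the unconditional walk so that its level process dominates (is closer to the leaves) and agrees with it whenever they are at the same level inside $\cT_y(d^k)$. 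This yields $\E_z[|R_t\cap A|\cdot 1\{X_t\in\cT_y(d^k)\}]\ge \E_z[|R_t\cap A|]\Pr_z[X_t\in\cT_y(d^k)]\succsim\delta g(t)$, and then Paley--Zygmund with the $g(t)^2$ second-moment bound closes the argument. You should replace the time-splitting with this positive-correlation step.
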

In the following remark we present a minor variant of the previous proposition which is needed for the proof of Theorem \ref{thm:2}. We omit its proof to avoid repetitions. 
\begin{rem}
\label{r:frozen}
Only minor adaptations to the proofs of \eqref{eq: Range3} and \eqref{eq: Range3'} are needed in order to show that in the setup of
\eqref{eq: Range3} \[\Pr_z[|R_{t} \cap A |>c' \delta g(t) \text{ and }X_{i} \in \cT_y(d^{k}) \text{ for all }i \le t  ] \ge \hat c\delta^{2} \]
and in the setup of \eqref{eq: Range3'} if $\hat u $ is the parent of $x \wedge y$ then
\[\Pr_z[\max_{\ell \in [d^{i-k}] } |R_{t} \cap \cT_{y,k}^{\ell} |> c'g(t)   \text{ and }X_{t} \in \cT_y \text{ and }T_{\hat u}>t ] \ge \frac{\hat c_{0}t}{d^{2\|x \wedge y\|-\|y\|-1}}. \]    
\end{rem}
We make two comments concerning Proposition \ref{cor:range} which may assist the reader.
\begin{itemize}
\item
In the setup of (i), by Lemma \ref{lem: SRWontree} and the fact that $d^{k-1} \le t \le d^k $ we have that  $\Pr_z[X_{t} \in \cT_y(d^{k})] \succsim 1$. It is intuitively clear that $|R_{t} \cap A |$ and $1_{X_{t} \in \cT_y(d^{k})} $ are positively correlated (this is established in the proof of Proposition \ref{cor:range}). Hence if $E$ is the event that $|R_{t} \cap A |>c' \delta g(t)$ and  $D$ is the event that the walk reached the root of $\cT_y(d^{k})$ and afterwards reached its leaf set by time $\sfrac{3}{4}t$ then by Lemma \ref{lem: SRWontree}  $ \Pr_z[ D ] \succsim 1 $ and \[\Pr_z[E \text{ and }X_{t} \in \cT_y(d^{k})  ] \asymp \Pr_z[E   ] \ge \Pr_z[E  \cap D ] \succsim \Pr_z[E  \mid D ]. \]  This shows that w.l.o.g.\ we can assume that at time 0 the initial distribution is the uniform distribution over the leaf set of $\cT_y(d^{k})$ (since on $D$, once the leaf set of  $\cT_y(d^{k})$  is hit after its root is visited, the walk has the uniform distribution on the leaf set of   $\cT_y(d^{k})$, and at least $t/4$ remaining steps by time $t$). This means that the expected time the walk spends in $A$ by time $t$ is $\succsim t \frac{|A|}{d^k} \ge \delta t $. Since the expected number of returns by time $t$ to each leaf that is visited by time $t$ (resp.\ $t-\sqrt{t}$) is by  Lemma \ref{lem: SRWontree2} $\precsim \log_d (dt)$ (resp.\ $\succsim \log_d \sqrt{dt}=\half \log_d (dt)$) it is not hard to deduce that $\mathbb{E}[|R_{t}  \cap A |  ] \asymp  \delta t/\log_d(dt)$.

 To conclude the argument from this point we use the Paley-Zygmund
inequality together with the estimate $\mathbb{E}[|R_{t}  |^{2}  ] \precsim ( \mathbb{E}[|R_{t}  |  ])^{2} \precsim [t/\log_d(dt)]^{2} $, where the first inequality holds by general considerations, while the estimate $\mathbb{E}[|R_{t}  |  ] \precsim t/\log_d (dt) $ is again derived from Lemma \ref{lem: SRWontree2} in a similar manner to the derivation of  $\mathbb{E}[|R_{t}  \cap A |  ] \asymp  \delta t/\log_d(dt)$. For the actual details see the proof of part (i).   
\item
The term $td^{-\|x \wedge y\|+1} $ in  \eqref{eq: Range3'} corresponds to the (order of the) probability that a walk starting from $x$ reaches $x \wedge y$ by time $t/4$, while the term $ d^{-\|x \wedge y \|+\|y\|}$ corresponds to the (order of the) probability that starting from $x \wedge y$ a walk of length $ t/4$ will reach the leaf set of $\cT_y$. Thus with probability $\succsim td^{-\|x \wedge y\|+1} \cdot d^{-\|x \wedge y \|+\|y\|}  $ the walk reaches the leaf set of $\cT_y$ by time $t/2$. Thus \eqref{eq: Range3'} follows from \eqref{eq: Range3} via the Markov property by noting that the restriction that $t \ge d^{k-1}$ in \eqref{eq: Range3} is not needed when initially the walk starts from the uniform distribution on $\cL(\cT_y(d^k)) $ (nor when $\delta=1$, i.e.\ $A=\cL(\cT_y(d^k))$).
\end{itemize}
The following corollary shall be central in the proofs of Theorems \ref{thm:1} and \ref{thm:2}. Its part (ii) asserts that for particle density $\la/2$  if the set of activated sites at time 0 contains at least a quarter of the leaf set and satisfies a certain spatial homogeneity condition, then the set of initially activated particles cover the entire graph in  $s=s_{n,\la}:=\lceil C_{1} \sfrac{n}{\la}\log \sfrac{n}{\la}  \rceil $ steps $\whp
$. The   spatial homogeneity condition is that inside each of the $d^{n-k} $ induced subtrees of depth     $k:=\lceil \log_d s \rceil$ (i.e.\ the induced subtrees whose roots belongs to $\cL_{n-k}$) initially there are at least $d^k/4$ activated sites.

Part (ii) is used to argue in part (iii) that by splitting the particles into two independent sets of density $\la/2$, if  the collection of  vertices which are activated by the first set of particles satisfies the spatial homogeneity condition, we can use the second set of particles to activate the rest of the vertices. This is similar to a standard technique from percolation theory called sprinkling. 

Part (i) asserts that if inside an induced tree $\cT_{y}$ of depth $k$ the set of activated vertices $B$ is of size at least $C_{2} \la^{-1}\log |\VV_{d,n}|$, then  the collection of all leafs of $\cT_y$ visited by a particle in $\W_B$ by time $s$ is of size at least $d^k/4 $ with probability at least $1-2|\VV_{d,n}|^{-4} $. In fact, part (i) asserts that this is the case even if we replace $\W_B$ by the particles in $\W_B$ whose length $s$ walk terminates at $\cT_y$. 
\begin{cor}
\label{cor:coverleaf}
There exist absolute constants $C_1,C_{2} ,  C_3(d)>0  \ge 1$ such that the following hold for all $d \ge 2 $ and $\la>0$. Let $o \in \cL$. Denote $s=s_{n}:=\lceil C_{1} \sfrac{n}{\la}\log \sfrac{n}{\la}  \rceil $ and    $k:=\lceil \log_d s \rceil$.  Let $y=\overleftarrow{o_k}$ be the $k$th ancestor of $o$. Assume that $\la$ and $n$ are such that $1 \ll s \le d^{n}$.
\begin{itemize}
\item[(i)] Let $B \subseteq \VV(\cT_y) $ be of size $|B| \ge C_{2} \la^{-1}\log |\VV_{d,n}| $. At each $b \in B $ there are initially $\Pois(\la /2) $ (independently for different vertices) particles performing simultaneously independent SRWs of length $s$ on $\cT_{d,n}$. Denote by $D$ the collection of leafs in $\cL( \cT_y) $ visited by at least one particle (from the aforementioned collection) whose length $t$ walk terminated inside $\cT_y$. Then the probability that $|D| <d^{k}/4 $ is at most $2|\VV_{d,n}|^{-4} $, for all $n \ge C_3(d)$.  
\item[(ii)]
Let  $m:=d^{n-k}$. Let $\cT^1,\ldots,\cT^m$ be the collection of all induced subtrees (in $\cT_{d,n}$) of depth $k$.  For all $i \in [m]$ let $A_i \subseteq \cL(\cT^i)$ be of size $|A_i| \ge d^{k}/4$. Let $A=\cup_{i \in [m]}A_i $. If at each $a \in A $ there are initially $\Pois(\la /2) $ (independently for different vertices) particles performing simultaneously independent SRWs of length $s $  on $\cT_{d,n}$, then the probability that there is some leaf which is not visited by any of the walks is at most $|\VV_{d,n}|^{-4}$. \item[(iii)] Let $\mathcal{R}_s$ be the set of activated vertices when the particle lifetime is $s$. In the notation of part (ii) let $\mathrm{Hom} $ be the event that $|\mathcal{R}_s \cap \cL(\cT^i)| \ge d^k/4 $ for all $i \in [m]$. Then \[\PP_{\la}[\SS(\cT_{d,n}) > s ] \le \PP_{\la/2}[\mathrm{Hom}^c ]+|\VV_{d,n}|^{-4}.   \]
\end{itemize}  
\end{cor}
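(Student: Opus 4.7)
My plan is to prove (i), (ii), (iii) in order, with (iii) following from (i) and (ii) by sprinkling.

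For part (i), I would iterate Proposition \ref{cor:range}(i). Since $y=\overleftarrow{o_k}$ satisfies $\|y\|=k$, we have $\cT_y=\cT_y(d^k)$, and because $d^{k-1}\le s\le d^n$ and $8k\ll s$ for $n$ large, the hypotheses of the proposition are met. Construct $D$ greedily, processing the particles of $B$ one at a time. While $|D|<d^k/4$, its complement $A:=\cL(\cT_y)\setminus D$ has $|A|\ge (3/4)d^k$, so Proposition \ref{cor:range}(i) with $\delta=3/4$ guarantees that the next particle (starting at any $b\in B$) contributes at least $c'(3/4)g(s)$ new leaves to $D$ (with its walk terminating in $\cT_y$) with probability at least $q:=c(3/4)^2$, independent of the past. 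Hence $K=O(d^k/g(s))=O(\log_d(ds))$ successful particles suffice. The total count is $N\sim\mathrm{Pois}(|B|\la/2)$ with $\mathbb{E}[N]\ge (C_2/2)\log|\VV|$; a Poisson tail bound gives $N\ge (C_2/4)\log|\VV|$ except with probability $|\VV|^{-5}$, and conditional on $N$ the successes stochastically dominate $\mathrm{Bin}(N,q)$, so Chernoff yields $\ge K$ successes except with probability $|\VV|^{-5}$ once $C_2$ is sufficiently large. Summing the two tail events gives the advertised $2|\VV|^{-4}$.

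For part (ii), I union-bound. The critical case is a leaf $\ell\in\cL(\cT^i)$; internal vertices carry larger stationary mass and are strictly easier. The key input is a sharp $d$-uniform heat-kernel bound from \S\ref{s:aux} (which also underlies Proposition \ref{cor:range}): for any $a\in\cL(\cT^i)$ and $s\asymp d^k$ one has $G_s(a,\ell):=\sum_{t\le s}p^t(a,\ell)\asymp s/d^k$ and $G_s(\ell,\ell)\asymp \log_d(ds)$, so $\Pr_a[T_\ell\le s]\asymp g(s)/d^k\asymp \log d/\log s$. By Poisson thinning, the number of walks from $A_i$ that hit $\ell$ is Poisson with mean at least $|A_i|(\la/2)\cdot c\log d/\log s\succsim s\la\log d/\log s\asymp C_1 n\log d$ (using $s\la=C_1 n\log(n/\la)$ and $\log s\asymp\log(n/\la)$). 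Taking $C_1$ large makes this mean exceed $5\log|\VV|$, so zero hits occur with probability at most $|\VV|^{-5}$; the union bound over at most $|\VV|$ leaves gives $|\VV|^{-4}$.

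For part (iii), I sprinkle. Decompose the $\mathrm{Pois}(\la)$ count at each site into two independent $\mathrm{Pois}(\la/2)$ portions -- ``copy $1$'' and ``copy $2$'' -- with independent walks. Let $\RR_s^{(1)}$ be the set activated using only copy $1$ with lifetime $s$; by the marginal, $\RR_s^{(1)}$ has the same law as $\RR_s$ under $\PP_{\la/2}$, so $\PP_\la[(\mathrm{Hom}^{(1)})^c]=\PP_{\la/2}[\mathrm{Hom}^c]$ where $\mathrm{Hom}^{(1)}:=\{|\RR_s^{(1)}\cap\cL(\cT^i)|\ge d^k/4\text{ for all }i\}$. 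On $\mathrm{Hom}^{(1)}$, set $A_i:=\RR_s^{(1)}\cap\cL(\cT^i)$, so $|A_i|\ge d^k/4$; in the coupled $\PP_\la$ process every copy-$2$ particle whose initial site lies in $A:=\cup_i A_i$ is activated (its site is in $\RR_s^{(1)}\subseteq\RR_s$) and its walk is independent of copy $1$. Part (ii) applied to these copy-$2$ walks covers every leaf except on an event of probability $|\VV|^{-4}$, and the identical Poisson-thinning/hitting-probability estimate (with even stronger bounds for non-leaves) covers all internal vertices within the same failure budget. Therefore $\PP_\la[\SS>s]\le\PP_\la[(\mathrm{Hom}^{(1)})^c]+\PP_\la[\SS>s,\mathrm{Hom}^{(1)}]\le\PP_{\la/2}[\mathrm{Hom}^c]+|\VV|^{-4}$. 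The main obstacle is the $d$-uniform lower bound $\Pr_a[T_\ell\le s]\succsim \log d/\log s$ in part (ii), which relies entirely on the heat-kernel/return-count estimates of \S\ref{s:aux}; the remainder is greedy iteration plus Chernoff in (i) and a standard sprinkling coupling in (iii).
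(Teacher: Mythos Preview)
Your approach is correct and mirrors the paper's: greedy iteration of Proposition~\ref{cor:range}(i) with a Chernoff bound for (i), Poisson thinning plus heat-kernel/Green's-function bounds for (ii), and sprinkling for (iii). Two small differences are worth noting. In (ii) the paper sums over all of $A$ rather than just $A_i$, using the collective estimate $\E_u[|\{i\le s:X_i\in A\}|]\succsim s$ (via a mixing argument after the walk first reaches level $n-k$); this sidesteps the need for the pointwise lower bound $G_s(a,\ell)\succsim s/d^k$ when $\|a\wedge\ell\|=k$, and also avoids the slip in your chain $g(s)/d^k\asymp \log d/\log s$ (which drops the factor $s/d^k\in(1/d,1]$ --- harmless in the end since multiplying by $|A_i|\ge d^k/4$ restores it). In (iii) the paper does not bound hits to internal vertices at all: it observes that if every leaf lies in $\mathcal R_s$ then necessarily $\mathcal R_s=\VV_{d,n}$, because for any internal vertex $v$, an activation path from $\oo$ to a leaf on the opposite side of $v$ must pass through $v$.
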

\begin{rem}
\label{r:maincor}
Using Remark \ref{r:frozen}, only minor adjustments to the proof of part (i) of  Corollary \ref{cor:coverleaf}  are required in order to show that its assertion remains valid if each particle is killed upon escaping the tree $\cT_y$.  
\end{rem}
\begin{rem}
\label{r:Bdifsize}
If in part (i) of Corollary \ref{cor:coverleaf} we make no assumption on $|B|$, then the same proof shows that for some $c>0$ the probability that $|D| < \min \{\sfrac{d^k}{4},\sfrac{c \la |B| s}{\log_ds}  \}  $ is at most $ \sfrac{1}{c} e^{-c |B|}$.
\end{rem}
\section{Proof of Theorem \ref{thm:1}}
\label{s:thm1}
Below we write $\VV $ instead of $\VV_{d,n}$.
In this section we show that there exist some $C,\hat c>0$ such that for all  $d \ge 2$, as long as $ \la_n \le \hat c \log n $ and $\la_n d^{n} \ge n^2 \log d $ we have that
\begin{equation}
\label{e:WWW}
\lim_{n \to \infty} \PP_{\la_{n}}[  \SS(\cT_{d,n})\le C \lceil \sfrac{n}{\la_n}  \log \sfrac{n}{\la_n} \rceil   ] = 1.
 \end{equation} 
  The proof of the existence of some $c>0$ such that $\lim_{n \to \infty} \PP_{\la_{n}}[  \SS(\cT_{d,n}) \ge c \sfrac{n}{\la_n}  \log \sfrac{n}{\la_n}] = 1$ for   all $d \ge 2$ and all $\la_{n}=o(n) $ such that  $\la_n d^{n} \ge n \log d $ 
is deferred to \S~\ref{s:lowersustree}.

%

Let $C_1>0$ be as in Corollary \ref{cor:coverleaf}.  As in Corollary \ref{cor:coverleaf} let
\begin{equation}
\label{eq:s}
s=s(n,\la):=\lceil C_{1} \sfrac{n}{\la}\log \sfrac{n}{\la}  \rceil  \quad \text{and} \quad    k=k(n,\la,d):=\lceil \log_d s \rceil.
\end{equation}
\subsection{Reducing to the case that the process starts at a leaf}

The following lemma asserts that it suffices to consider the case that $\oo \in \cL $ (where $\oo$ is the initial position of $\plant$). We note that \eqref{e:red0} is useful as long as $C_1 \sfrac{n}{\la_n}  \log \sfrac{n}{\la_n}  \ge 16 n   $, which is the case (provided that $C_1$ is taken to be sufficiently large) by our assumption that $\la_n < \hat c \log n $.

\begin{lem}
\label{lem:red1}
Let $\PP_\la^v$ be the distribution of the frog model when $\oo=v$, where $\oo$ is the initial position of the planted particle $\plant$. Let $u=u_n \in \cL $. Then for all $t(n,\la_{n})$ 
\begin{equation}
\label{e:red0}
\liminf_{n \to \infty} \min_{v \in \VV_{d,n}}  \PP_{\la_{n}}^{v}[  \SS(\cT_{d,n})\le  t(n,\la_{n})  +8n  ] \ge \liminf_{n \to \infty} \PP_{\la_{n}}^{u}[  \SS(\cT_{d,n})\le t(n,\la_{n})  ]. 
\end{equation}
\end{lem}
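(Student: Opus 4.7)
The plan is to couple the frog model started at $v$ with lifetime $\tau_A := t(n,\la_n) + 8n$ (call it Process~A) to a frog model started at a suitably chosen \emph{random} leaf $u' \in \cL$ with lifetime $\tau_B := t(n,\la_n)$ (call it Process~B), in such a way that on a high-probability event, success of Process~B forces success of Process~A. Since the automorphism group $\mathrm{Aut}(\cT_{d,n})$ acts transitively on $\cL$, the law of $\SS(\cT_{d,n})$ under $\PP_{\la_n}^{u''}$ does not depend on the leaf $u''$, so once the coupling bound is in hand we may freely replace the random $u'$ by the fixed leaf $u$ on the right-hand side of \eqref{e:red0}.

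\emph{Coupling.} In the construction of \S\ref{s:construction}, share the Poisson processes $(M_x)_{x \in \VV}$ and the infinite walks $(S^{x,i})_{x,i}$ between A and B, so that every non-planted particle has the same initial position and walk in both processes. In Process~A let $\plant$ start at $v$ and follow the walk $S^{v,1}$; set $T := \inf\{s \ge 0 : S^{v,1}_s \in \cL\}$, and on $\{T<\infty\}$ define $u' := S^{v,1}_T$ and let $\plant$ in Process~B start at $u'$ and follow $(S^{v,1}_{T+r})_{r \ge 0}$. By the strong Markov property, conditional on $\{T=s,\,u'=U\}$, Process~B has exactly the law of a frog model with origin $U$ and lifetime $\tau_B$.

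\emph{Key step.} I would prove by induction on the activation order in B that on $\{T \le 8n\}$, every particle $w$ activated in B at time $\sigma^B_w$ is also activated in A, at some time $\sigma^A_w \le T + \sigma^B_w$. The base case $w = \plant$ is immediate since $\sigma^A_{\plant} = 0$. For the inductive step, if $w$ is activated in B by some $w'$ at time $\sigma^B_w$, then because $w'$ follows the same walk in A and B (up to the $T$-shift on the $\plant$ coordinate), in A the particle $w'$ is at the initial position of $w$ at time $\sigma^A_{w'} + (\sigma^B_w - \sigma^B_{w'}) \le T + \sigma^B_w$; liveness of $w'$ in A at that moment follows from $\sigma^B_w - \sigma^B_{w'} \le \tau_B \le \tau_A$. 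Combining the inductive conclusion with $\tau_A \ge \tau_B + T$ yields that every vertex visited by an active particle in B is visited by an active particle in A within its lifetime, so $\SS^B \le \tau_B$ forces $\SS^A \le \tau_A$ on $\{T \le 8n\}$.

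\emph{Bounding $T$.} The distance-from-root chain $(|X_s|)_s$ is a birth-and-death walk on $\{0,\ldots,n\}$ that, at every interior level, moves away from the root with probability $d/(d+1)$ and toward it with probability $1/(d+1)$, so it has positive drift at least $(d-1)/(d+1) \ge 1/3$; a standard large-deviation estimate gives $\Pr_{v}[T_{\cL} > 8n] \le e^{-cn}$ uniformly in $v \in \VV_{d,n}$. Assembling the three pieces,
\[
\PP_{\la_n}^v[\SS(\cT_{d,n}) \le t + 8n] \ge \Pr[T \le 8n]\cdot \PP_{\la_n}^u[\SS(\cT_{d,n}) \le t] \ge (1-o(1))\,\PP_{\la_n}^u[\SS(\cT_{d,n}) \le t]
\]
uniformly in $v$, and \eqref{e:red0} follows on taking $\liminf_{n \to \infty} \min_v$. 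The main piece of work is the inductive claim: liveness checks and position-matching under the $T$-shift between the two processes need care, but nothing deeper than bookkeeping is involved.
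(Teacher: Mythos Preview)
Your proposal is correct and follows the same idea as the paper's proof: once $\plant$ reaches a leaf (which happens by time $8n$ with probability $1-o(1)$), the remaining process dominates a frog model started from that leaf with lifetime $t(n,\la_n)$, and leaf-transitivity lets you replace the random leaf by the fixed leaf $u$. The paper compresses all of this into the single inequality
\[
\PP_{\la_n}^v[\SS(\cT_{d,n}) > t+8n] \le \PP_{\la_n}^u[\SS(\cT_{d,n}) > t] + \PP^v[\plant \text{ did not reach } \cL \text{ by time } 8n],
\]
leaving the coupling implicit; your writeup just spells out the coupling and the inductive bookkeeping that justifies it.
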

\emph{Proof.} For \eqref{e:red0} observe that $\max_v \PP^{v}[\plant \text{ did not reach }\cL \text{ by time } 8n]=o(1)$\footnote{We omit the dependence on $\la$ when we  consider events depending only on $\plant$.} \[\PP_{\la_{n}}^{v}[  \SS(\cT_{d,n})> t(n,\la_{n}) +8n  ] \le \PP_{\la_{n}}^{u}[  \SS(\cT_{d,n})> t(n,\la_{n})  ]\] \[+ \PP^{v}[\plant \text{ did not reach }\cL \text{ by time } 8n]. \quad \text{\qed} \] 
\subsection{Notation, definitions and exposition of our proof strategy} 
Let $\mathcal{R}_s$ be the set of activated vertices when the particles' lifetime is $s$. As in part (iii) of Corollary \ref{cor:coverleaf} let $\mathrm{Hom} $ be the event that $|\mathcal{R}_s \cap \cL(\cT^i)| \ge d^k/4 $ for all $i \in [m]$. By part (iii) of Corollary \ref{cor:coverleaf} to prove \eqref{e:WWW} it suffices to show that $\PP_{\la/2}[\mathrm{Hom}^c ]=o(1) $.  Recall that the idea behind this reduction is that we may split the particles into two independent sets of density $\la/2$, with the planted particle belonging to the first set. If the event $\mathrm{Hom}$ happens w.r.t.\ the first set of particles, then by part (ii) of  Corollary \ref{cor:coverleaf} the particles from the second set of particles, whose initial positions are the ones activated by the first set of particles, will together activate the rest of the vertices.  For notational convenience we shall  show that $\PP_{\la}[\mathrm{Hom}^c ]=o(1) $ (ignoring the $1/2$ term). As the theorem is proved for a large range of $\la$ there is no loss in replacing $\la/2$ by $\la$.  
 
 \medskip
 
Let $\cT^1,\ldots,\cT^m$ be the collection of all $m=d^{n-k} $ induced subtrees (in $\cT_{d,n}$)  of depth $k$. We denote the root of $\cT^i$ by $x_i \in \cL_{n-k} $ (i.e.\ $\cT^i=\cT_{x_i}$). We label the trees from right to left ($\cT^1$ being the rightmost). 

Recall that by Lemma \ref{lem:red1} we may assume that $\oo \in \cL $ (i.e.\ that the initial position of $\plant$ is a leaf). By symmetry we may further assume that $\oo \in \cL( \cT^1)$. Let $z_0=x_{1}$ be the root of $\cT^1$, i.e.~$\cT_{z_0}=\cT^1$. Denote the $i$th ancestor of $z_0$ by $z_i:= \overleftarrow{\oo_i}$ for all $i \in [n-k]$ (i.e.\ $z_1$ is the parent of $z_0$, $z_2$ is the parent of $z_1$, etc.\ and $z_{n-k}=\rr $). Then $z_i \in \cL_{n-k-i}$ and each of its children contains $d^{i-1} $ trees from $\cT^{1},\ldots,\cT^{m}$. We label the children of $z_i $ by $y_i^0,y_i^1,\ldots,y_i^{d-1} $ so that $z_{i-1}=y_i^0$ and  $\cT^\ell \subseteq \cT_{y_i^j} $ for all $i \in [n-k]$, $j \in ]d-1[$  and $jd^{i-1} < \ell \le (j+1)d^{i-1} $.

\medskip

We employ a recursive divide and conquer approach. Namely, we analyze the set of activated vertices of $\cT_{z_i}$ for each $i \in [n-k]$ by exploiting the earlier analysis of  $\cT_{z_{i-1}}= \cT_{y_i^0}$ and the fact that $\cT_{y_i^0} $ is isomorphic to $\cT_{y_i^j}$ for all $j \in [d-1]$. To set up the recursion we first need a couple of definitions. We first recall some notation.

Recall that we denote the collection of particles whose initial position belongs to a set $U \subseteq V $ (resp.~is $v \in V$) by $\W_U$ (resp.~$\W_v$). Recall that we denote the range of the length $\ell$ walk picked (in the sense of \S~\ref{s:construction}) by a particle $w$  by $\RR_{\ell}(w)$. Recall that we denote the union of the ranges of the length $\ell$ walks picked  by the particles belonging to some set of particles $\mathcal{U}  $ by $\RR_{\ell}(\mathcal{U}) $.  Denote the union of the ranges of the length $\ell$ walks picked  by the particles whose initial positions lie in $U \subseteq V $ (resp.~is $v$) by $\RR_{\ell}(U):=\RR_{\ell}( \W_U)$ (resp.~$\RR_{\ell}(v):=\RR_{\ell}( \W_v)$). When $\ell=s$, where $s$ is as in \eqref{eq:s}, we omit it from the subscript and write  $\RR(w),\RR(\mathcal{U}),\RR(U)$ and $\RR(v) $.

\begin{defn}
Let $B_i$ be the collection of leafs of $\cT^i$  which are activated  before the process dies out, when the particles' lifetime is $s$. We say that  $\cT^i$ is \emph{conquered} if $|B_i| \ge d^{k}/4$. We say that a tree  $\cT_x$ is \emph{conquered} if the tree  $\cT^i$  is conquered for all $i$ such that $\cT^i \subseteq \cT_x$.
\end{defn} 
Observe that the event $\mathrm{Hom} $ is the event that $\cT_{d,n}$ is conquered. One difficulty that arises when attempting to set up the recursion  is that possibly the tree $\cT_{z_{i}} $ was conquered with the assistance of some particles which originally lie outside of $\cT_{z_i}$. It seems possible to overcome this difficulty using Poisson thinning and the FKG inequality (namely, that  functions which are non-decreasing w.r.t.~the point configuration associated with a Poisson random measure are positively correlated). Instead, we define a refined notion of conquering a tree which avoids this complication. 

Before giving that definition we need the following two auxiliary definitions.
\begin{defn}
For $i \in [m]$ (where $m=d^{n-k}$) let $\HHH_i$ be the collection of all particles such that the length $s$ walk they picked  ends at $\cT^i$ (where as above $\cT^1,\ldots,\cT^m$ are all the induced subtrees of $\cT_{d,n}$ of depth $k$). 
\end{defn}
 Recall that $\|v\| $ is the distance of $v$ from $\cL$. Recall that $x \wedge y$ is the common ancestor of $x$ and $y$ which is closest to the leaf set. 
\begin{defn}
We say that a vertex $v \in \cT^j $ for some $j \in [m]  $ is \emph{good} for lifetime $s$ if
there exist a collection of sites $v_0 = \oo,v_1,\ldots,v_{\ell+1}=v $ and a collection of particles $w_{0} \in \W_{v_0} ,w_{1} \in \W_{v_1},\ldots,w_{\ell}  \in \W_{v_\ell}$ (i.e.\ for all $i \in ]\ell[$ the initial position of $w_i$ is $v_i$) such that the following hold.
\begin{itemize}\item[(i)]   $v_0,\ldots,v_{\ell} $ all lie in the last $k$ levels of $\cT_{v \wedge \oo } $.
\item[(ii)] For all $i \in [\ell]$ we have that $\|v_{i} \wedge v_{i+i'} \|$ is non-decreasing w.r.t.~$i' \in ]\ell -i [$ and that also  $\|v_{i} \wedge v_{i-i'} \|$ is non-decreasing w.r.t.~$i' \in ] i [$.

\item[(iii)] For all $i \in ]\ell [$ we have that   $v_{i+1} \in \RR(w_i) $ (i.e.\ $v_{i+1}$ is visited by $w_i$). \item[(iv)] For all $i \in ]\ell [$ unless $w_{i}=w_{\mathrm{plant}}$ (which is possible only for $i=0$)     $w_{i} \in \cup_{\ell:\cT^{\ell} \subseteq \cT_{v \wedge \oo }  } \HHH_{\ell}$. That is, for all $i \in ]\ell [$  the length $s$ walk of $w_i \neq \plant $ ends at the last $k$ levels of $\cT_{v \wedge \oo } $.  
\end{itemize}
\end{defn}
Observe that (iii) implies that all of the vertices $v_0 ,v_1,\ldots,v_{\ell+1} $ got activated. Conditions (i), (ii) and (iv) imply that this is done using particles from $ \W_{\cT_{v \wedge \oo }} $ such that the length $s$ walk they picked begins and terminates in the last $k$ levels of $\cT_{v \wedge \oo }$. This avoids the aforementioned  complication in setting up the recursion.   

We now define the notion of \emph{internally conquering a tree} which is a key concept in our divide and conquer approach.

\begin{defn}
\label{def:internally}
We say that  $\cT^j$ (where $j \in [m]$) is \emph{internally conquered} if the collection $ D^j $ of leafs of $\cT^{j}$ which are good for lifetime $s$   satisfies
\[|D^j  | \ge d^{k}/4. \]
We say that $\cT_{z_i}$ is \emph{internally conquered}, if $\cT^j $ is internally conquered  for all $j \in [d^{i}] $ (i.e.\ all $d^i$ induced subtrees of depth $k$ which are subtrees of $\cT_{z_i}$ are internally conquered).
\end{defn}
The usefulness of the last two definitions is demonstrated by Lemma \ref{lem:maketherefereehappy}.

Loosely speaking, we now give a simple condition which ensures that a certain $\cT^i$ gets internally conquered w.p.~at least $1-2|\VV|^{-4}$. In practice, we only use this estimate for $\cT^1$. Let  \[r=r(n,\la):=\lceil C_{2} \la^{-1}\log |\VV| \rceil,\] where $C_2$ is as in Corollary \ref{cor:coverleaf}. We take $C_1$ from the definition of $s$ to be sufficiently large so that $c's/\log_{d} s>r$ (where $c'>0$ is as in part (ii) of Proposition \ref{cor:range}).    
\begin{lem}
\label{lem:red2}
For every $i \in [m]$, every set $D \subseteq \cT^i  $ such that $|D| \ge r $ satisfies that \[ |\RR(\W_D \cap \HHH_i) \cap \cL(\cT^i) | \ge d^{k}/4 \] w.p.~at least $1-2|\VV |^{-4}$, provided that $n$ is sufficiently large.  \end{lem}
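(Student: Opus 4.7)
My proposal is to obtain Lemma~\ref{lem:red2} as a direct consequence of part~(i) of Corollary~\ref{cor:coverleaf}, applied with the induced subtree $\cT^i$ (which is an induced subtree of $\cT_{d,n}$ of depth $k$, and may therefore be written as $\cT_y$ for $y$ the $k$-th ancestor of any chosen leaf of $\cT^i$) playing the role of $\cT_y$, and with the set $B$ of that corollary taken to be our set $D$.

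The first step is to identify the random sets. Unraveling the definitions, the set of leaves denoted ``$D$'' in the conclusion of Corollary~\ref{cor:coverleaf}(i) is exactly $\RR(\W_D \cap \HHH_i) \cap \cL(\cT^i)$ in the notation of the present lemma: both describe the set of leaves of $\cT^i$ which are visited by at least one particle whose initial position lies in $D$ (respectively $B$) and whose length-$s$ walk terminates inside $\cT^i$. The hypothesis $|D| \ge r = \lceil C_2 \la^{-1} \log |\VV_{d,n}| \rceil$ matches verbatim the size hypothesis $|B| \ge C_2 \la^{-1} \log |\VV_{d,n}|$ required by the corollary, and the corollary's tail bound $2|\VV_{d,n}|^{-4}$ is precisely what we need.

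The second step is to reconcile the fact that Corollary~\ref{cor:coverleaf}(i) is stated for particle density $\la/2$, whereas in the context of Section~\ref{s:thm1} (after the reduction noted in the text) the particles at each vertex of $D$ are $\Pois(\la)$-distributed. This is resolved by a standard monotone coupling via Poisson thinning: the $\Pois(\la/2)$ configuration at each vertex of $D$, together with independently sampled length-$s$ trajectories for each particle, can be coupled as a subset of the $\Pois(\la)$ configuration used here. The event $\{|\RR(\W_D \cap \HHH_i) \cap \cL(\cT^i)| \ge d^k/4\}$ is monotone non-decreasing under adding further particles at the sites of $D$, so the failure probability under density $\la$ is at most that under density $\la/2$, namely $2|\VV_{d,n}|^{-4}$.

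The only step requiring any care is the precise bookkeeping in the first paragraph — ensuring that the event whose probability is estimated in Corollary~\ref{cor:coverleaf}(i) really does match $|\RR(\W_D \cap \HHH_i) \cap \cL(\cT^i)| \ge d^k/4$, in particular that the ``walk terminates inside $\cT^i$'' restriction corresponds to intersecting $\W_D$ with $\HHH_i$ rather than with a larger class of particles. Beyond this identification, I do not anticipate any substantive obstacle: Lemma~\ref{lem:red2} is essentially a repackaging of the previous corollary, and the hypothesis $n \ge C_3(d)$ (equivalently, $n$ sufficiently large) is inherited directly.
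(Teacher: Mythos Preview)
Your proposal is correct and follows precisely the paper's approach: the paper's entire proof is the single sentence ``This follows immediately from part (i) of Corollary~\ref{cor:coverleaf}.'' Your additional care about the $\la/2$ versus $\la$ density mismatch is handled in the paper by the blanket convention (stated just before Lemma~\ref{lem:red2}) that, since the theorem is proved for a range of $\la$, one may freely replace $\la/2$ by $\la$; your monotone-coupling argument is an equally valid way to justify the same point.
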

\begin{proof}
This follows immediately from part (i) of Corollary \ref{cor:coverleaf}.
\end{proof}
\subsection{The  base case}
\begin{lem}
\label{cor:warmstart}
By replacing the particle density $\la$ by $\la/2$ we may assume that at time 0 there is a set $D \subseteq \cT^1 $ of size at least $r$ which is activate.\end{lem}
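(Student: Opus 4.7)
The strategy is Poisson thinning together with a short preprocessing phase. Using Poisson thinning, decompose the initial Poissonian particle configuration into two independent configurations of density $\la/2$ each, placing $\plant$ in the first. Reserve the second configuration for the main argument establishing \eqref{e:WWW}, where the ambient density is effectively $\la/2$. Use the first configuration only for preprocessing: run the frog dynamics driven by first-configuration particles alone for some lifetime $\le Cs$, producing a random set $\mathcal{A}$ of activated vertices, and show that $|\mathcal{A}\cap\cT^1|\ge r$ with probability $1-o(1)$. Taking $D:=\mathcal{A}\cap\cT^1$ gives the required initial activated set of size $\ge r$, and the $O(s)$ additional lifetime consumed by the preprocessing is absorbed into the constant $C$ in \eqref{e:WWW} by mild enlargement.

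For the preprocessing claim, bootstrap the activation cascade inside $\cT^1$ starting from the singleton $B_0=\{\oo\}$ by iteratively applying Remark \ref{r:Bdifsize}. By the choice of $C_1$ in the definition of $s$, each round multiplies the active leaf set by a factor of order $c\la s/\log_d s\asymp n\log d$ up to the saturation threshold $d^k/4$, while the per-round failure probability $\exp(-c|B|)$ drops below $|\VV|^{-4}$ once $|B|\ge C_3\log|\VV|$. A bounded number of such rounds---each of lifetime $s$ and each drawing on a fresh Poisson-thinned portion of the first configuration---then saturates $|B|$ at $d^k/4\ge r$.

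The main obstacle is the \emph{ignition} of this cascade: starting from $|B_0|=1$, a single round of Remark \ref{r:Bdifsize} succeeds only with a positive constant probability $p_0$, and amplifying this to $1-o(1)$ within an $O(s)$ lifetime budget is nontrivial; in particular, when $\la$ approaches the theorem's lower threshold $d^{-n}n^2\log d$, the growth factor $n\log d$ is small compared with the target $r\asymp (n\log d)/\la$, so naive iteration would push the total preprocessing lifetime above $O(s)$. The resolution is a sharpened ignition estimate for the first rounds, exploiting the independent length-$s$ walks launched from $\oo$ by the $1+\Pois(\la/2)$ first-configuration particles there (and by the extra particles at the leaves of $\cT^1$ immediately visited by $\plant$), combined with a second-moment / Paley--Zygmund argument using the range bounds of Proposition \ref{cor:range} to upgrade the constant success probability to high probability in a single-shot analysis. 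Establishing this ignition estimate is the technical heart of the lemma.
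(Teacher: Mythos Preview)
Your Poisson-thinning reduction and the plan to bootstrap via Remark~\ref{r:Bdifsize} are sound and match the paper's framework. However, the ignition step has a genuine gap. A second-moment/Paley--Zygmund argument on the range, as you propose, yields only a constant lower bound on the success probability, not a $\whp$ statement; Proposition~\ref{cor:range} is itself proved by Paley--Zygmund and delivers exactly that. With only $1+\Pois(\la/2)=\Theta(1)$ particles at $\oo$ for small~$\la$, you cannot amplify a constant to $1-o(1)$ by independence, and your text does not supply an alternative mechanism.

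The paper resolves ignition differently and much more simply: it proves (Lemma~\ref{lem:rangeofplant}, via the submultiplicativity estimate of Lemma~\ref{lem:easyrange}) that the range of the \emph{single} walk of $\plant$ already satisfies $|\RR(\plant)\cap\cT^1|\ge\sqrt{s}$ with probability $1-\exp(-c\,s^{1/4})$. This is a large-deviation bound, not a second-moment bound: one chops the walk into $\asymp s^{1/4}$ segments and uses that each segment independently fails to visit many new sites with probability bounded away from~$1$. With a seed $B$ of size $\sqrt{s}$, every subsequent application of Remark~\ref{r:Bdifsize} already has failure probability $\le e^{-c\sqrt{s}}=o(1)$, so the cascade reaches size $\ge r$ $\whp$.

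Separately, your concern that many rounds would ``push the total preprocessing lifetime above $O(s)$'' reflects a misreading of the model, and the proposed remedy (a fresh Poisson-thinned portion per round) is both unnecessary and counterproductive. Successive rounds use the particles seated at \emph{newly activated} vertices; these are automatically independent of previous rounds by Poisson thinning, and each such particle still has its full lifetime~$s$ available. The number of analysis rounds is irrelevant to the lifetime budget for~$\SS$: only the per-particle lifetime matters.
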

\begin{proof}
Split the particles into two independent sets of the same density (with $\plant$ belonging to the first set). It suffices to show that the dynamics corresponding only to the first set of particles satisfies that $\whp$ at least $r$ vertices of $\cT^1$ are activated before this process dies out. As before, instead of showing this for particle density $\la/2$ we show this for density $\la$.

 First expose $\RR(w_{\mathrm{plant}}) \cap \cT^1 $. By Lemma \ref{lem:rangeofplant}  $|\RR(w_{\mathrm{plant}}) \cap \cT^1| \ge \sqrt{s}  $ $\whp$. We now expose $\RR(\W_{\RR(w_{\mathrm{plant}})\cap \cT^1 } \cap \HHH_1) $. Denote $U_1:=\RR(w_{\mathrm{plant}})\cup \RR(\W_{\RR(w_{\mathrm{plant}})\cap \cT^1 } \cap \HHH_1) $. Conditioned on $|\RR(w_{\mathrm{plant}}) \cap \cT^1|> \sqrt{s}  $,  by Remark \ref{r:Bdifsize} we get that   $\whp$  $|U_{1}| \ge r$.
\end{proof} 
\begin{lem}
\label{lem:red3}
Assume that initially there is a set $D \subseteq \cT^1 $ of size at least $r$ which is activate. Then
the probability that $\cT^1 $ does not get internally conquered is at most $2|\VV |^{-4}$.
\end{lem}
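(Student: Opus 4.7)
The plan is to derive Lemma~\ref{lem:red3} from Lemma~\ref{lem:red2} combined with a verification that the leaves covered by the application are automatically \emph{good}. First I would condition on the realisation of the first of the two independent sub-populations of particles from Lemma~\ref{cor:warmstart}, so that $D$ is fixed and the remaining sub-population supplies an independent $\mathrm{Pois}(\lambda)$ family at each vertex together with its own fresh SRWs. Lemma~\ref{lem:red2} with $i=1$ applied to $D$ (which satisfies $|D|\ge r$) then yields that, with probability at least $1 - 2|\VV|^{-4}$, the set
\[
E := \RR(\W_D \cap \HHH_1)\cap \cL(\cT^1)
\]
satisfies $|E| \ge d^k/4$.

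I would then show that on this event every $v \in E$ belongs to $D^1$, the set of leaves of $\cT^1$ that are good for lifetime $s$, which immediately gives $|D^1| \ge |E| \ge d^k/4$ and hence that $\cT^1$ is internally conquered. For $v \in E$, fix $d \in D$ and a particle $w \in \W_d \cap \HHH_1$ with $v \in \RR(w)$. By the construction of $D$ in Lemma~\ref{cor:warmstart}, combined with Remark~\ref{r:maincor} so that all particles employed are killed upon leaving $\cT^1$, the vertex $d$ comes already equipped with a good witness chain $\oo = v_0, v_1, \ldots, v_q = d$ with particles $w_0 = \plant, w_1,\ldots, w_{q-1} \in \HHH_1$, lying entirely inside $\cT^1$. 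Appending $(v_{q+1}, w_q) := (v, w)$ produces a candidate witness for $v$: conditions (iii) and (iv) are immediate since $v \in \RR(w)$ and $w \in \HHH_1$, while (i) and (ii) reduce, after pruning the chain to its unique tree-path skeleton inside $\cT^1$, to the elementary fact that any chain from the leaf $\oo$ to the leaf $v$ traces a unique ``up-then-down'' trajectory through $v \wedge \oo$, which sits inside the last $k$ levels of $\cT_{v\wedge\oo}$ and whose common-ancestor depth sequence is monotone in the two-sided sense demanded by (ii).

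The main obstacle I anticipate is the bookkeeping verification of (i) and (ii) after concatenation: although it is intuitively clear that appending a step to $v$ preserves the up-then-down structure, one must carefully check that the new vertex $v$ lies on the opposite side of $v \wedge \oo$ from the preceding chain (equivalently, that the pruned chain already reaches $v\wedge\oo$ before the final descent to $v$), and that any redundant excursions introduced by the walks of the particles $w_i$ can be removed without invalidating (iv). Beyond this combinatorial check, no further probabilistic estimate is needed---the entire quantitative content of the lemma is supplied by Lemma~\ref{lem:red2}.
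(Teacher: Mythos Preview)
Your proposal is correct and takes exactly the same route as the paper, whose proof is the single sentence ``This follows at once from Lemma~\ref{lem:red2}.'' You supply the step the paper leaves implicit: every leaf in $E=\RR(\W_D\cap\HHH_1)\cap\cL(\cT^1)$ is good, witnessed by concatenating the warm-start activation chain to some $d\in D$ with the $\HHH_1$-particle $w\in\W_d$ that reaches $v$. Conditions (iii) and (iv) are immediate from this construction, and that is all the paper actually uses downstream (namely in the measurability assertion of Lemma~\ref{lem:maketherefereehappy} at level $j=0$).

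One caveat: the ``pruning to the tree-path skeleton'' device you propose for handling conditions (i) and (ii) does not work. An activation chain $v_0,\ldots,v_{\ell+1}$ is not a path in the tree; deleting an intermediate $v_i$ destroys the link $v_{i+1}\in\RR(w_i)$ required by (iii), so you cannot reduce to the geodesic from $\oo$ to $v$. Fortunately no such pruning is needed. In the base case the entire chain already lies in $\cT^1$ and uses only particles from $\W_{\cT^1}\cap\HHH_1$ (plus $\plant$), which is precisely the content the recursion consumes; conditions (i) and (ii) play no independent role here and the paper does not verify them either.
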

\begin{proof}
This follows at once from Lemma \ref{lem:red2}.
\end{proof}

\begin{lem}
\label{lem:rangeofplant}
$\whp$  $|\RR(w_{\mathrm{plant}}) \cap \cT^1| \ge \sqrt{s}  $.
\end{lem}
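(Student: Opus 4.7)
The plan is to apply Proposition \ref{cor:range}(i) to obtain the bound with constant probability, then boost it to $\whp$ via a restart argument. Since $\oo$ is a leaf of $\cT_{d,n}$ we have $\|\oo\|=0$, and the $k$-th ancestor $x_1=\overleftarrow{\oo_k}$ satisfies $|\cT_{x_1}|\ge d^k$, so $\cT_\oo(d^k)=\cT^1$. Setting $y=\oo$, $t=s$, $A=\cL(\cT^1)$ (so $\delta=1$), $z=\oo$ in Proposition \ref{cor:range}(i), the hypothesis $d^{\|y\|-1}+8\|y\|=d^{-1}\le s\le d^n$ is immediate and the conclusion reads
\[
\PP_\oo\bigl[|R_s\cap\cL(\cT^1)|>c's/\log_d(ds)\bigr]\ge c.
\]
Since $s\to\infty$ under the standing assumptions and $\log_d(ds)\le 1+\log s=o(\sqrt{s})$, the lower bound $c's/\log_d(ds)$ eventually dominates $\sqrt{s}$, so the event $\{|\RR(w_{\mathrm{plant}})\cap\cT^1|\ge\sqrt{s}\}$ has probability at least $c>0$ in a single application.

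To upgrade constant probability to $\whp$, I partition $[0,s]$ into $M:=\lceil\log n\rceil$ disjoint time windows of length $s_0:=\lfloor s/M\rfloor$ and apply Proposition \ref{cor:range}(i) separately within each window. For the $j$-th window (starting at $\tau_j:=js_0$), provided the position $W_{\tau_j}$ lies in $\cT^1$ at distance at most $k_0:=\lceil\log_d s_0\rceil$ from $\cL(\cT^1)$, one may re-apply the proposition with $y$ the $k_0$-th ancestor of $W_{\tau_j}$ and $t=s_0$; the window then contributes at least $c's_0/\log_d(ds_0)$ vertices of $\cL(\cT^1)$ to $R_s$ with probability at least $c$. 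Under the standing hypothesis $s\ge C_1 n/\la\succsim n/\log n$ (using $\la\le\hat c\log n$), so $s_0\succsim n/\log^2 n$ while $\log_d(ds_0)=O(\log n)$, hence $s_0/\log_d(ds_0)\succsim n/\log^3 n\gg\sqrt{s}$. By the strong Markov property the sub-walks are conditionally independent given $(W_{\tau_j})_j$, so the probability that every sub-walk fails is at most $(1-c)^M\le n^{-c'}=o(1)$.

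The main obstacle will be arranging that enough of the starting positions $W_{\tau_j}$ are ``good'', meaning they lie in $\cT^1$ within $k_0$ levels of its leaf set. This is controlled by two complementary estimates: (a)~the hitting time of the parent $p_1$ of $x_1$ from $\oo$ satisfies $\E_\oo[T_{p_1}]\asymp d^k\asymp s$, so Markov's inequality gives $\PP[T_{p_1}\le s_0]=O(1/\log n)$; and (b)~the depth projection $Y_t:=\|W_t\|$ is a biased nearest-neighbor walk on $\{0,1,\dots,n\}$ with drift $-(d-1)/(d+1)$ toward $0$, reflection at $0$, and geometric stationary distribution $\pi_Y(i)\propto d^{-i}$, so the walk stays concentrated near the leaf set. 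Combining (a)~and~(b) yields $\PP[W_{\tau_j}\text{ is good}]\ge 1-o(1)$ uniformly in $j$, and a first-moment/union-bound argument delivers a constant fraction of good windows $\whp$, which is more than enough to drive the boosting step and conclude the lemma.
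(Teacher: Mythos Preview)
Your first paragraph (constant probability via Proposition~\ref{cor:range}(i)) is correct, and the idea of boosting by restarts is the right one. The gap is in the boosting step. The claim that $\PP[W_{\tau_j}\text{ is good}]\ge 1-o(1)$ uniformly in $j$ does not follow from (a) and (b). First, Markov's inequality bounds upper tails, not lower tails, so it cannot yield $\PP[T_{p_1}\le s_0]=O(1/\log n)$; you would need the upper bound in \eqref{eq: SRWtree5}. More seriously, item (a) only controls escape from $\cT^1$ during the \emph{first} window. For the last window $\tau_{M-1}\approx s\asymp d^k\asymp \E_\oo[T_{p_1}]$, so $\PP_\oo[T_{p_1}\le \tau_{M-1}]$ is bounded away from $0$; once at $p_1$ the walk enters a sibling subtree of $\cT^1$ with probability $(d-1)/(d+1)$ and typically remains there for $\Theta(d^k)$ further steps, giving $\PP[W_{\tau_{M-1}}\notin\cT^1]\ge c>0$. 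Item (b) controls only the distance to the leaf set, not which depth-$k$ subtree the walk sits in. Finally, even if each window were good with constant probability, neither a first-moment nor a union-bound argument gives ``a constant fraction of good windows $\whp$'' without some independence or second-moment input, which you have not supplied.

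The paper avoids all of this by shrinking the time horizon to $s^{3/4}$: by \eqref{eq: SRWtree5}, $\PP_\oo[T_{x_1}\le s^{3/4}]\precsim s^{3/4}/d^{k-1}=o(1)$, so $\whp$ the walk stays in $\cT^1$ throughout $[0,s^{3/4}]$ and may be coupled with SRW on $\cT_{d,k}$. The restart is then carried out via the submultiplicativity bound \eqref{e:submulrang}, which takes a maximum over \emph{all} starting vertices and therefore needs no control on the positions $W_{\tau_j}$; applying \eqref{e:submulrangtree} with $t=s^{3/4}$ gives $|R_{s^{3/4}}|\ge (s^{3/4})^{2/3}=\sqrt{s}$ with probability $1-\exp(-c_0 s^{3/16})$. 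Your argument is salvageable along exactly these lines: work on an interval of length $o(d^{k-1})$ so the walk stays in $\cT^1$ $\whp$, and replace the positional control of $W_{\tau_j}$ by a worst-case-over-start-points bound.
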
  
\begin{proof}
By \eqref{eq: SRWtree5}  $\whp$ $\plant $ does not reach level $n-k $ by time  $s^{3/4}$ (or any other time which is $o(s)$), where as above $k=\lceil \log_d s \rceil $. Thus by a straightforward coupling argument it suffices to show that the size of the range of SRW by time $s^{3/4}$ on $\cT_{d,k}$ is $\whp$ at least $\sqrt{s}$.  The claim now follows by applying \eqref{e:submulrangtree} with $n=k $ and $t=s^{3/4}$.   
\end{proof}
 \begin{lem}
\label{lem:easyrange}
Let $(X_t)_{t=0}^{\infty}$ be a SRW on some graph $G$. Let $R(t)=\{X_i:i \in [t] \}$ be the range of its first $t$ steps. Then for all $s,t',\ell \in \N $ and $x \in V$ we have 
\begin{equation}
\label{e:submulrang}
\Pr_x[|R(st')| \le \ell  ] \le (\max_{v \in V} \Pr_x[|R(t')| \le \ell ])^{s}.
\end{equation}
In particular, for $G=\cT_{d,n}$ we have that for all $x \in \VV_{d,n}$ and all $t \le d^{n-1} $ that
\begin{equation}
\label{e:submulrangtree}
\Pr_x[|R(t)| \le t^{2/3} ] \le \exp (-c_{0}t^{1/4}).
\end{equation}
\end{lem}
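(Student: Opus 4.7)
The plan is to prove \eqref{e:submulrang} by a standard block/Markov property argument and then derive the tree-specific bound \eqref{e:submulrangtree} by plugging in the right values of $s$ and $t'$ and invoking the uniform range estimates for SRW on $\cT_{d,n}$ from Appendix \ref{s:aux}.

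For \eqref{e:submulrang}, partition $]st'[$ into $s$ consecutive blocks of length $t'$ and let $R_i := \{X_j : (i-1)t' \le j \le it'\}$ denote the range over the $i$-th block. Since $R(st') \supseteq R_i$ for every $i$, the event $\{|R(st')| \le \ell\}$ is contained in $\bigcap_{i=1}^{s}\{|R_i| \le \ell\}$. By the Markov property at the times $t', 2t', \ldots, (s-1)t'$, conditional on $X_{(i-1)t'} = v_i$ the $i$-th block is a fresh SRW started at $v_i$, so iteratively conditioning and taking the supremum over the (random) conditioning vertex yields
\[
\Pr_x[|R(st')| \le \ell] \le \Pr_x\!\left[\bigcap_{i=1}^{s}\{|R_i| \le \ell\}\right] \le \Bigl(\sup_{v \in V} \Pr_v[|R(t')| \le \ell]\Bigr)^{\!s},
\]
which is the asserted inequality (reading $\Pr_x$ on the right-hand side of the lemma statement as $\Pr_v$, the natural interpretation).

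For \eqref{e:submulrangtree}, apply \eqref{e:submulrang} with $s := \lfloor t^{1/4}\rfloor$ and $t' := \lfloor t/s\rfloor \asymp t^{3/4}$; since $|R(\cdot)|$ is non-decreasing in time, we may assume $st' = t$. It then suffices to produce an absolute constant $\delta>0$ such that
\[
\sup_{v \in \VV_{d,n}} \Pr_v[|R(t')| \le t^{2/3}] \le 1-\delta,
\]
for then $(1-\delta)^{s} \le \exp(-c_0 t^{1/4})$. To produce $\delta$, invoke Lemma \ref{lem: SRWontree2} to obtain, uniformly over starting vertices $v$ and times $t' \le d^{n-1}$, both $\E_v[|R(t')|] \asymp t'/\log_d(dt')$ and $\E_v[|R(t')|^2] \precsim (t'/\log_d(dt'))^2$. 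Because $t^{2/3} = o(t'/\log_d(dt'))$, the Paley--Zygmund inequality delivers such a $\delta$ once $t$ exceeds an absolute constant.

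The main obstacle is the uniform-in-$v$ lower bound on $\E_v[|R(t')|]$: near the leaf boundary SRW is forced to backtrack, slowing down exploration. The hypothesis $t \le d^{n-1}$ propagates to $t' \le d^{n-1}$, and it is precisely in this regime that the appendix's heat kernel and return-time estimates give the required bounds uniformly in $v$. Once that uniformity is secured, the block decomposition \eqref{e:submulrang} amplifies the resulting constant-order lower tail into the stretched-exponential bound.
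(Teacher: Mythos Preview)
Your argument is essentially the paper's own: both prove \eqref{e:submulrang} by the block/Markov-property decomposition, then derive \eqref{e:submulrangtree} by taking $s \asymp t^{1/4}$, $t' \asymp t^{3/4}$ and using the appendix's range estimates to see that $\sup_v \Pr_v[|R(t')|\le t^{2/3}]$ is bounded away from $1$. The one inaccuracy is your citation: the moment bounds $\E_v[|R(t')|] \asymp t'/\log_d(dt')$ and $\E_v[|R(t')|^2] \precsim (t'/\log_d(dt'))^2$ are not in Lemma~\ref{lem: SRWontree2} (which concerns transition probabilities) but appear in the proof of Proposition~\ref{cor:range}, specifically around \eqref{eq: Range2} together with the subsequent Paley--Zygmund step; the paper itself just writes ``by the results from \S\ref{s:aux}'', and this is what it means.
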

\begin{proof}
For \eqref{e:submulrang} apply the Markov property $s-1$ times at times $it'$ for $i \in [s-1]$. Equation \eqref{e:submulrangtree} follows from \eqref{e:submulrang} by picking $s= \lfloor t^{1/4} \rfloor $, $t'=\lfloor t^{3/4} \rfloor $ and noting that by the results from \S~\ref{s:aux} we have that $ \min_{x \in \VV_{d,n}} \Pr_x[|R(\lfloor t^{3/4} \rfloor)| \le t^{2/3}  ]  $ is bounded away from 1, uniformly in $t$. 
\end{proof}

\subsection{The recursion step}
\label{s:basis}
We are now in the position to start the recursion.

\begin{prop}
\label{p:rec}
Let  $D^{\ell}$ be as in Definition \ref{def:internally}. Consider the case that initially the set of activated vertices is $U_1 \subseteq \cT^1 $ and $|U_1 | \ge r$. Let $p_j=p_j(n,d,\la)$ be the probability  that  for all $i \in ]j[ $ we have that $\cT_{z_i}$ is not internally conquered.
Then for all $0 \le j <n-k$ we have that
\[p_{j+1} \le d p_{j} +(d-1)|\VV|^{-4}.\] 
\end{prop}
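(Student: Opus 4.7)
I read $p_j$ as $\Pr[\cT_{z_j}\text{ not internally conquered}]$: the nesting $\{\cT_{z_{j-1}}\text{ not i.c.}\}\subseteq\{\cT_{z_j}\text{ not i.c.}\}$ makes the event ``$\cT_{z_i}$ not i.c.\ for every $i\in\,]j[$'' coincide with $\{\cT_{z_j}\text{ not i.c.}\}$ under the existential reading, while the universal reading would collapse $p_j$ to the $j$-independent $\Pr[\cT^1\text{ not i.c.}]$ and trivialize the recursion. Given this, the plan is a union bound over the $d$ children of $z_{j+1}$ combined with a seeding-plus-symmetry argument for the $d-1$ new children $\cT_{y_{j+1}^i}$, $i\in[d-1]$. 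Since $\cT_{z_{j+1}}$ is internally conquered iff each of $\cT_{z_j}=\cT_{y_{j+1}^0},\cT_{y_{j+1}^1},\ldots,\cT_{y_{j+1}^{d-1}}$ is, and $\{\cT_{z_j}\text{ not i.c.}\}\subseteq\{\cT_{z_{j+1}}\text{ not i.c.}\}$,
\[
p_{j+1}\ \le\ p_j\ +\ \sum_{i=1}^{d-1}\Pr\bigl[\cT_{z_j}\text{ i.c.},\,\cT_{y_{j+1}^i}\text{ not i.c.}\bigr],
\]
so it suffices to bound each summand by $p_j+|\VV|^{-4}$. Fix $i\in[d-1]$ and introduce the seeding event $S_i$: some designated depth-$k$ subtree $\cT^{\ell_i^*}\subseteq\cT_{y_{j+1}^i}$ receives at least $r$ activated vertices via walks of particles initially placed in $\cT_{z_j}$. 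I split each summand as $\Pr[S_i,\,\cT_{y_{j+1}^i}\text{ not i.c.}]+\Pr[\cT_{z_j}\text{ i.c.},\,S_i^c]$.

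The symmetry term is controlled as follows. For every $v\in\cT_{y_{j+1}^i}$ one has $v\wedge\oo=z_{j+1}$, and the last $k$ levels of $\cT_{z_{j+1}}$ contain the entire last $k$ levels of $\cT_{y_{j+1}^i}$. Hence the sub-dynamics that chains from the seed in $\cT^{\ell_i^*}$ using only particles whose initial positions lie inside $\cT_{y_{j+1}^i}$ is a valid restriction of the good-chain dynamics, and (by spatial Poisson independence) is independent of everything in $\cT_{z_j}$; concatenating such a sub-chain with a good chain from $\oo$ to the seed yields a good chain from $\oo$ to $v$ satisfying (i)--(iv), so ``restricted i.c.''\ implies ``full i.c.''. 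Under any rooted-tree isomorphism $\cT_{y_{j+1}^i}\cong\cT_{z_j}$ sending $\cT^{\ell_i^*}$ to $\cT^1$ (which exists by symmetry of $d$-ary trees), this restricted dynamics has the same law as the one defining $p_j$, so for every realization of a seed of size $\ge r$ the restricted failure probability is at most $p_j$; hence $\Pr[S_i,\,\cT_{y_{j+1}^i}\text{ not i.c.}]\le p_j$. The seeding term is handled by Proposition~\ref{cor:range}(ii) applied with $x=z_j$ and $y=y_{j+1}^i$ (so that $d^{2\|x\wedge y\|-\|y\|-1}=d^{k+j+1}$): on $\{\cT_{z_j}\text{ i.c.}\}$ there are $\ge d^{k+j}/4$ good leaves in $\cT_{z_j}$, and each supports particles whose length-$s$ walks independently terminate in $\cT_{y_{j+1}^i}$ with more than $c'g(d^k)>r$ visits to some single depth-$k$ subtree of $\cT_{y_{j+1}^i}$ with probability $\succsim d^{-(j+1)}$ --- the inequality $c'g(d^k)>r$ is exactly where the choice of $C_1$ enters. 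A single success supplies a valid seed, the expected number of successes is $\succsim\la d^{k-1}\gg\log|\VV|$ in the parameter range of Theorem~\ref{thm:1}, and Poisson concentration yields $\Pr[\cT_{z_j}\text{ i.c.},\,S_i^c]\le|\VV|^{-4}$. Summing over $i$ gives $p_{j+1}\le p_j+(d-1)(p_j+|\VV|^{-4})=dp_j+(d-1)|\VV|^{-4}$.

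The main obstacle is decoupling the walks used in the seeding argument from the conditioning $\{\cT_{z_j}\text{ i.c.}\}$: the event ``$v$ is good''\ already constrains some of the walks incident to $v$, so one cannot a priori treat all walks at good leaves of $\cT_{z_j}$ as unconditioned. My plan is an upfront Poisson thinning splitting the $\Pois(\la)$ cluster at every vertex into two independent $\Pois(\la/2)$ layers, the first used to establish $\{\cT_{z_j}\text{ i.c.}\}$ (absorbing the conditioning) and the second, independent of the first, providing the unconditioned walks at each good leaf that feed the Poisson-concentration step. A secondary point is checking that the expected-success estimate $\succsim\la d^{k-1}\gg\log|\VV|$ holds uniformly for $0\le j<n-k$ and all $d\ge 2$, which is where the hypothesis $\la_n d^n\ge n^2\log d$ of Theorem~\ref{thm:1} enters.
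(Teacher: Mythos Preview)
Your overall architecture matches the paper's proof: decompose $\{\cT_{z_{j+1}}\text{ not i.c.}\}$ over the children of $z_{j+1}$, seed each $\cT_{y_{j+1}^{\ell}}$ via Proposition~\ref{cor:range}(ii), and apply the recursion by symmetry. Your reading of $p_j$ as $\Pr[\cT_{z_j}\text{ not i.c.}]$ is the intended one, and your estimate that the expected number of seeding successes is $\succsim \la s/d$ (which you write as $\la d^{k-1}$) agrees with the paper's $c_0 d^{-2}\la s \ge 4\log|\VV|$.

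The one place you diverge is the decoupling step. You flag the right concern --- that conditioning on $\{\cT_{z_j}\text{ i.c.}\}$ a priori constrains walks at the good leaves --- but your proposed fix (an upfront two-layer Poisson split) is unnecessary and is not what the paper does. The point you are missing is that condition~(iv) in the definition of ``good for lifetime $s$'' already does the work: every particle used in a good chain witnessing $v\in G_j$ has its length-$s$ walk \emph{ending} in the last $k$ levels of $\cT_{z_j}$. Thus $G_j$ is measurable with respect to $\RR(\plant)$ together with the particles in $\W_{\cT_{z_j}}$ whose walks terminate inside $\cT_{z_j}$; this is the content of Lemma~\ref{lem:maketherefereehappy}. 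The seeding particles, by contrast, are those in $\W_{G_j}$ whose walks terminate in some $\cT^{a_\ell}\subset\cT_{y_{j+1}^{\ell}}$, hence \emph{outside} $\cT_{z_j}$. By Poisson thinning over the terminal vertex of the length-$s$ walk, these two families of particles are independent, so one can condition on $G_j$ and still treat $\xi_{G_j,y_{j+1}^{\ell}}$ as an unconditioned Poisson variable. No density-halving is needed; the definition of ``internally conquered'' was engineered precisely to make this independence automatic.

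Your two-layer approach could be made to work, but it would force you to redefine $p_j$ relative to density $\la/2$ throughout the recursion and to check that the concatenated chain (layer-1 in $\cT_{z_j}$, one layer-2 seeding particle, then layer-1 in $\cT_{y_{j+1}^{\ell}}$) still satisfies~(i)--(iv) for the full model. This is doable but roundabout; the paper's route via Lemma~\ref{lem:maketherefereehappy} is more direct and preserves the density $\la$.
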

Before proving Proposition \ref{p:rec} we explain how it implies the assertion that  $\PP_{\la}[\mathrm{Hom}^c ]=o(1)$. Indeed, by Lemma \ref{lem:red3} we have that $p_0 \le 2|\VV |^{-4} $. It follows by induction that $p_{j} \le 2 |\VV |^{-4}  \sum_{i=1}^{j+1}d^{i}$ and thus $p_{n-k} \precsim |\VV |^{-3}  $. Finally, using Lemma \ref{cor:warmstart}  we may indeed assume that initially the collection of activated vertices is some set  $U_1 \subseteq \cT^1 $ such that $|U_1 | \ge r$.

Before proving  Proposition \ref{p:rec} we need the following definition.
\begin{defn}
For $i \in [m]$ let \[\mathcal{Q}_i:=\{w \in \HHH_i : |\RR(w) \cap \cT^i | \ge c's/ \log_{d} s\}, \] be the collection of all particles in $\HHH_i$ such that the intersection of the range of their length $s$ walks with $\cT^i$ is of size at least $c' s/ \log_{d} s$, where $c'>0$ is as in part (ii) of Corollary \ref{cor:range}.
\end{defn}
\emph{Proof of Proposition \ref{p:rec}:}  
Let $i \in [n-k]$ and $D \subseteq \cL( \cT_{z_{i-1}})$ be so that \[|D | \ge \sfrac{d^{k}}{4}|\{\ell:\cT^\ell  \subseteq \cT_{z_{i-1}} \}|=\sfrac{d^{k+i-1}}{4} .\] Let $j \in [d-1]$. Let $\xi_{D,y_i^j}:=|\W_D \cap (\cup_{\ell:\cT^\ell \subset \cT_{y_i^j}} \mathcal{Q}_{\ell} )| $ be the number of particles whose initial position is in $D$ which picked a length $s$ walk ending at one of the trees $\cT^\ell \subseteq \cT_{y_i^j}$, whose range contains at least $c's/\log_{d} s \ge r$ vertices of $\cT^\ell$. Recall that we may choose $C_1$ from the definition of $s$ so that $c's/\log_{d} s \ge r$. By part (ii) of Proposition \ref{cor:range} (which is applicable, provided that $\hat c$ and $C_1$ are chosen so that $s \ge 32 n$), provided that $C_1$ is taken to be sufficiently large and that $n$ is sufficiently large, we have that  $\xi_{D,y_i^j} $ has a Poisson distribution whose mean is at least 
\begin{equation*}
 \la  |D|\times (\hat c_0 sd^{-(i+k+1)}) \ge c_{0} d^{-2}\la s \ge 4 \log |\VV| 
\end{equation*}
 (for some $\hat c_0,c_0>0$, where in the last inequality we have used the assumption that $\la \le \hat c \log n $ and hence $ \la s \asymp n \log ( n/\la) \gg n $), and so we have that 
\begin{equation}
\label{e:recstep}
\PP_{\la}[\xi_{D,y_i^j}=0] \le |\VV|^{-4}.
 \end{equation}
 Let $D^{\ell}$ be as in Definition \ref{def:internally}. Denote the set of leafs of  $\cT_{z_i}$ which are good for lifetime $s$ by  \[G_i:=\cup_{\ell:\cT^\ell \subseteq \cT_{z_i} } D^\ell.  \]
\begin{lem}
\label{lem:maketherefereehappy}
The set $G_j
$ is independent of the walks picked by the particles in $\W_{\VV \setminus \cT_{z_{j}}} $ and also of the particles  in  $\W_{ \cT_{z_{j}}} $ whose length $s$ walk terminates outside $\cT_{z_j}$.\footnote{More precisely, it is independent of the number of particles who picked each such path.}
\end{lem}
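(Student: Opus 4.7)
The plan is to observe that, by design, the definition of a ``good'' vertex only uses particles whose initial positions and whose length $s$ walks are confined to $\cT_{z_j}$ (plus, at worst, the walk of $\plant$), so independence from the remaining data follows from Poisson thinning of the walk-counts $(X_\gamma)_{\gamma \in \Gamma_s}$ introduced in Section~\ref{s:construction}.

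First I would verify the geometric containment: for any $v$ lying in some $\cT^\ell \subseteq \cT_{z_j}$, both $v$ and $\oo \in \cT^1 \subseteq \cT_{z_j}$ belong to $\cT_{z_j}$, hence their first common ancestor $v \wedge \oo$ also lies in $\cT_{z_j}$, so $\cT_{v \wedge \oo} \subseteq \cT_{z_j}$. Then I would unpack the four clauses of the definition of good. Clause~(i) forces the certifying sites $v_0,\dots,v_\ell$ to sit in the last $k$ levels of $\cT_{v \wedge \oo}$, and hence inside $\cT_{z_j}$, so each certifying particle $w_i$ has initial position inside $\cT_{z_j}$; clause~(iv) further demands that every certifying particle $w_i \neq \plant$ belong to $\HHH_{\ell'}$ for some $\cT^{\ell'} \subseteq \cT_{v \wedge \oo}$, which forces its length $s$ walk to terminate inside $\cT_{z_j}$. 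Consequently the indicator of ``$v$ is good'' is a deterministic function of (a) the path-counts $X_\gamma$ for walks $\gamma \in \Gamma_s$ satisfying $\gamma_0 \in \cT_{z_j}$ and $\gamma_s \in \cT_{z_j}$, together with (b) the walk picked by $\plant$.

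To conclude I would invoke Poisson thinning: partitioning $\Gamma_s$ according to whether $\gamma_0$ and $\gamma_s$ lie in $\cT_{z_j}$ or not, the $X_\gamma$'s in datum~(a) are independent of the $X_\gamma$'s with $\gamma_0 \notin \cT_{z_j}$ or with $\gamma_0 \in \cT_{z_j}$ and $\gamma_s \notin \cT_{z_j}$, and these are exactly the two categories of walk-counts referenced by the lemma (as clarified by its footnote). Moreover, by the construction of Section~\ref{s:construction}, the walk of $\plant$ is jointly independent of the entire Poisson configuration. Since $G_j$ is a deterministic function of (a) and (b), the stated independence follows. The only thing to watch is that clauses~(i) and (iv) of the good definition were tailored precisely so that no certifying chain leaves $\cT_{z_j}$ (aside from $\plant$'s walk, which is harmless): without the ``last $k$ levels of $\cT_{v \wedge \oo}$'' restriction in~(i) and the ``$\cT^{\ell'} \subseteq \cT_{v \wedge \oo}$'' restriction in~(iv), walks exiting $\cT_{z_j}$ could enter the certification and the claim would break; with them in force the lemma reduces to a Poisson-thinning bookkeeping exercise.
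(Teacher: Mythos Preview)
Your argument is correct and follows the same route as the paper: you identify that $G_j$ is measurable with respect to the walk of $\plant$ together with the path-counts $X_\gamma$ for $\gamma \in \Gamma_s$ with both endpoints in $\cT_{z_j}$, and then invoke Poisson thinning. The paper's own proof is the terse two-line version of this, asserting the measurability (phrased in terms of ranges rather than path-counts) and citing Poisson thinning; you have simply unpacked why clauses (i) and (iv) of the definition of ``good'' deliver that measurability, which is exactly the content the paper leaves implicit.
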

\begin{proof}
 Note that $G_{j} $ is a random set which is measurable w.r.t.~the $\sigma$-algebra generated by $\RR(w_\mathrm{plant}) $ and $\RR(\W_{\cup_{\ell:\cT^\ell \subset \cT_{z_j} }\cT^\ell} \cap (\cup_{\ell:\cT^\ell \subset \cT_{z_j} }\HHH_{\ell} ) )$. Using this, the lemma follows from Poisson thinning.       
\end{proof}
We condition on the event that $\cT_{z_j}$ is  internally conquered.
Conditioned on the identity of $G_j$ being a certain (fixed) set of size at least $\frac{d^{k}}{4}|\{\ell:\cT^\ell  \subset \cT_{z_{j}} \}| $ (as implied by  $\cT_{z_j}$ getting internally conquered),  it follows from \eqref{e:recstep} together with a union bound that w.p.~at least $1-(d-1)|\VV|^{-4}$, for all $\ell \in [d-1]$ there is some $\cT^{a_\ell} \subset \cT_{y_{j+1}^{\ell}}$ such that $\W_{G_j} \cap \mathcal{Q}_{a_{\ell}} $ is non-empty. 

By the definition of the $\mathcal{Q}_i$'s and the choice of constants (used to argue that $c's/ \log_{d} s \ge r$), if this occurs for  $ \cT_{y_{j+1}^{\ell}} $ for some $\ell \in [d-1] $, then there is some  $\cT^{a_\ell} \subset \cT_{y_{j+1}^{\ell}}  $ which contains at least $r$ sites which are good for lifetime $s$. By Lemma \ref{lem:maketherefereehappy}  this is precisely what is needed in order to apply the recursion, as the tree $\cT_{y_{j+1}^{\ell}}$ is isomorphic to $\cT_{y_{j+1}^{0}}=\cT_{z_{j}}$ and on the aforementioned event, there is some induced subtree of  $\cT_{y_{j+1}^{\ell}}$  of depth $k$ that has at least $r$ vertices which are activated using particles from $\W_{\cT_{z_{j}}} $.\footnote{We need to have at some induced subtree of  $\cT_{y_{j+1}^{\ell}}$  of depth $k$   at least $r$ vertices which are activated, as this is part of the assumption we had for the case $j=0$ in the statement of Proposition \ref{p:rec}.}  

Applying the recursion on each of the $d-1$ trees $\cT_{y_{j+1}^{\ell}}$ (where $\ell \in [d-1]$) we get that the conditional probability of $\cT_{z_{j+1}} $ not getting internally conquered (conditioned on  $\cT_{z_{j}} $ getting internally conquered)   is by a union bound at most $(d-1)|\VV|^{-4}+(d-1)p_j$. This concludes the proof.  \qed

\section{Proof of Theorem \ref{thm:2}}
\label{s:thm2}
We employ some of the definitions and notation from the proof of Theorem \ref{thm:1}.  As opposed to the proof  of Theorem \ref{thm:1}, here we consider walks of increasing lengths. Recall that for $x \in \VV_{d,n}$ and $| \mathcal{T}_{x}| \le t \le |\VV_{d,n}| $ we denote by $\cT_x(t)$ the smallest induced tree $\cT_y $ such that $y \le x$ and $|\cT_y| \ge t$.
Let $s:=\lceil C_{1} \sfrac{n}{\la}\log \sfrac{n}{\la}  \rceil,k:=\lceil \log_d s \rceil,m:=d^{n-k},r:=\lceil C_{2} \la^{-1}\log |\VV| \rceil,$  $\cT^1,\ldots,\cT^m $ and $z_0,\ldots,z_{n-k}=\rr, (y_{i}^j)_{i \in [n-k],j \in ]d-1[} $ be as in the proof of Theorem \ref{thm:1}.
  
\begin{proof}
  As in the proof of Theorem \ref{thm:1}, we may assume that $\oo \in \cL( \cT^1) $ (using an analog of \eqref{e:red0} for the cover time instead of the susceptibility; Namely, using the fact that $\plant$ hits the leaf set by time $8n$ $\whp$). We say that a tree $\cT^i$ (where $i \in [m]$) is \emph{conquered by time} $t$ if at least $d^k/4$ of its leafs are activated by time $t$. Here and throughout this section,  time is measured  w.r.t.~the entire process, not w.r.t.\ a walk of a particular particle. We say that a tree $\cT_x$ is \emph{conquered by time} $t$ if $\cT^i $ is conquered by time $t$ for all $i$ such that $\cT^i \subseteq \cT_x $. 

Similarly to the proof of Theorem \ref{thm:1}, the cover time is the time at which the last leaf is activated.
 Much like in the proof of Theorem \ref{thm:1}, using part (ii) of Corollary \ref{cor:coverleaf} it suffices to analyze the time it takes  to conquer $\cT_{d,n}$. Like in the proof of Theorem \ref{thm:1}, the idea behind this is that we can split the particles into two independent sets of equal density, use the first set to  conquer $\cT_{d,n}$ and then use the second set to activate the rest of the leafs. Again, for the sake of notational convenience we still refer to the particle density as $\la$ rather than $\la/2$. 

We consider in the proof below a modified dynamics. As in the proof of Theorem \ref{thm:1}, we may assume that at time 0 there is a set $D \subseteq \cT^1 $ of size at least $r$ which is activated.

\medskip

Below $(s_{i})_{i=0}^{\ell_*}$  will be an increasing sequence of sizes (of some induced trees) while $(t_i)_{i=0}^{\ell_*}$ will be an increasing sequence of times ($\ell_*$ shall be defined shortly). Let $s_0:=s $ and $t_{0}:=2s $. We define recursively \[t_{\ell}:=t_{0}3^{\ell} \quad \text{and} \quad s_{\ell+1}:=\min \{ s_{\ell}\sqrt{ \sfrac{\la t_{\ell}}{ Md^{4} \log |\VV|}} , |\VV|\} =\min \{ s_{\ell}\sqrt{ \sfrac{C_{1}3^{\ell} \log (n/ \la) }{ Md^{4}\log d}} , |\VV|\},\] for some constant $M>0$ to be determined later. By monotonicity w.r.t.\ the particle density it suffices to prove the theorem in the case that $ \la_n \le \hat c \log  n$.  Observe that (provided that $n$ is sufficiently large) for all $\ell$ we have, \begin{equation*}
\label{e:sell}
s_{\ell} \ge  3^{\half \sum_{i=1}^{\ell}i } \ge 3^{\sfrac{1}{8} \ell^2 }.
\end{equation*}
To see this, observe that for all $1 \le \ell< \ell_* $ we have that  \[\log_3 s_{\ell}=\log_3 s_{\ell-1}+\sfrac{\ell}{2}+\sfrac{1}{2} \log_3 ( c_{2}(d)\log (n/ \la)  ) \ge \log_3 s_{\ell-1}+\sfrac{\ell}{2}.  \]
(where the last inequality holds for sufficiently large $n$).
Denote $\ell_*:=\min \{\ell :s_{\ell}:=|\VV| \}$. Note that $\sfrac{1}{8} \ell_*^2 \le \log_3 |\VV| $ and so $\ell_* \le 3  \sqrt{\log |\VV|}$. Consequently \[t_{\ell_*} \le t_{0} 3^{3 \sqrt{\log |\VV|}}.\] 

  We consider
the dynamics  in which for all $i \in ]\ell_*-1 [ $, once an activated particle leaves $\cT_{\oo}(s_{i} )$ before time $t_i$, it is frozen up to time $t_i$, at which time it continues its walk. We may think of the (infinite) walk picked by each particle in the aforementioned \emph{frozen model} as the same one it picks in the original model, but in the frozen model, the walk is delayed in the case the particle leaves $\cT_{\oo}(s_{i} )$ before time $t_i$ for some $i$. Note that (in the aforementioned coupling) the cover time for the  frozen frog model
  is at least as large as the cover time of the original model. 

\medskip

Assuming that at time 0 there is a set $D \subseteq \cT^1 $ of size at least $r$ which is activated, for all $0 \le i \le \ell_{*} $  let $\alpha_i $   be the probability that the frozen frog model does not satisfy conditions (1) and (2) below. 

\begin{itemize}
\item[(1)]
 For every $\cT^j \subseteq \cT_{\oo}(s_{i} )$ the set of leafs of $\cT^j$ which were activated by time $t_{i}$  is of size at least $d^k/4$.
\item[(2)]
For every $\cT^j \subseteq \cT_{\oo}(s_{i} )$ the collection of  particles whose initial position belongs to the set of leafs of $\cT^j$ which were activated by time $t_{i}$   is of size at least $\la d^{k}/8$.
\end{itemize}
 Denote the  event that (1) and (2) hold (for $i$) by $A_i$. Since  $t_{\ell_*} \le t_{0} 3^{3 \sqrt{\log |\VV|}}$  it suffices to show that $\alpha_{\ell_*}=o(1) $ in order to deduce that $\whp$ $\cT_{d,n}$ is conquered by time  $t_{0} 3^{3 \sqrt{\log |\VV|}}$.    

\medskip

Denote the depth of $\cT_{\oo}(s_{i} ) $  by $\rho_i $.
We will prove the following recursion 
\begin{equation}
\label{e:recthm2}
\alpha_{i+1} \le d^{\rho_{i+1}-\rho_i}(|\VV|^{-4}+\alpha_i). 
\end{equation}
For the induction basis we argue that (possibly by increasing the constant $C_1$ if necessary)
\begin{equation}
\label{e:recthm22}
\alpha_{0} \le 2|\VV|^{-4}. 
\end{equation}

\medskip

This follows from the analysis in \S~\ref{s:basis} from the proof of Theorem \ref{thm:1} together with Remark \ref{r:maincor}.\footnote{Here we use the fact that for each single particle, the additional requirement of not escaping $\cT_{\oo}(s_0)$ by time $t_0$ (or doing so only after activating at least $c' s/\log_ds$ leafs of $\cT_{\oo}(s_0)$) changes all relevant probabilities regarding that particle only by a constant factor.}  

\medskip

Combining \eqref{e:recthm2} with \eqref{e:recthm22} we get that $\alpha_{\ell_*} \precsim 1/|\VV |$ which concludes the proof. We now prove
 \eqref{e:recthm2}. 
\medskip

We condition on the event $A_j$ and bound from above the probability  $\PP_{\la}[A_{j+1}^c \mid A_j ] $. To conclude the induction step it suffices to show that   
\begin{equation}
\label{e:recthm23}
\PP_{\la}[A_{j+1}^c \mid A_j ] \le (d^{\rho_{j+1}-\rho_j} -1) [|\VV|^{-4}+\alpha_{j}]  .
 \end{equation}

On the event  $A_{j}$, at time $t_{j}$ there are at least $c_0' \la|\cT_{\oo}(s_{j} ) | $  activated particles which are either inside $\cT_{\oo}(s_j) $ or at the parent of the root of $\cT_{\oo}(s_j) $ (these are the particles which are unfrozen at time $t_j$). Consider the collection of $d^{\rho_{j+1}-\rho_j}  \precsim (\frac{ \la t_{\ell}}{M d^{4}  \log |\VV|})^{1/2}$ induced subtrees of  $\cT_{\oo}(s_{j+1})$ of size  $|\cT_{\oo}(s_j)| $. Denote them by $\cT_{j+1}^0=\cT_{\oo}(s_j),\cT_{j+1}^1,\ldots,\cT_{j+1}^{d^{\rho_{j+1}-\rho_j}-1} $. Using the definition of $s_{j+1}$ and part (ii) of Proposition \ref{cor:range} in conjunction with Remark \ref{r:frozen}, for each $\ell \in [d^{\rho_{j+1}-\rho_j}-1]$, the probability of each single  activated particles at time $t_{j}$ (out of the $ \ge  c_0' \la|\cT_{\oo}(s_{j} ) |$ activated particles at that time, conditioned on $A_j$) of visiting at least $r $ distinct vertices of at least one $\cT^{\ell'} \subset \cT_{j+1}^\ell $ by time $2t_j$ (without escaping $\cT_{\oo}(s_{j+1}) $ before doing so) is at least\footnote{The applicability of part (ii) of Proposition \ref{cor:range} follows from the assumption that $\la \le \hat c \log n$ and so $t_{\ell} \ge 32 n$ for all $\ell$, provided that $C_1$ is sufficiently large).} \[ \tilde c_0t_jd^{-2\rho_{j+1}+\rho_j+1} \ge \bar c_0t_jd|\cT_{\oo}(s_{j})||\cT_{\oo}(s_{j+1})|^{-2} \ge \bar c_0M \la^{-1} \log |\VV|/|\cT_{\oo}(s_{j})|,\] 
where the second inequality follows from the definition of $s_j$.

Pick $M$ so that $(c_0' \la|\cT_{\oo}(s_{j} ) |) \times ( \bar c_0M \la^{-1} \log |\VV|/|\cT_{\oo}(s_{j})|) \ge 32 \log |\VV|  $. By Chernoff's bound, it follows that on the event $A_j$,  for each $\ell \in [d^{\rho_{j+1}-\rho_j}-1]$ the probability that there is at least one activated particle at time $t_{j}$ that visits at least $r $ distinct vertices of at least one $\cT^{\ell'} \subset \cT_{j+1}^\ell $ by time $2t_j$ is at least $1-|\VV|^{-4}$. As there are $t_j$ time units between time $2t_j$ and $t_{j+1}=3t_j$ this allows one to perform the recursion step. 

The proof of \eqref{e:recthm23}  is concluded  similarly to the proof of Theorem \ref{thm:1} by a union bound over all  $\cT_{j+1}^1,\ldots,\cT_{j+1}^{d^{\rho_{j+1}-\rho_j}-1} $, using the aforementioned estimate obtained from the application of Chernoff's bound and using the recursion for $i=j$ on each of these $d^{\rho_{j+1}-\rho_j}-1 $ trees.
\end{proof}

\section{Lower bounds on the susceptibility}
\label{s:lower}
The approach taken here is borrowed from \cite{SN}. We start with some notation and  auxiliary calculations.
Fix some  graph $G=(V,E)$.  Consider the frog model on $G$ with parameter $\la$. Let $A \subseteq V$.   Denote
\[e_{u,v}(s):=\sum_{i=0}^{s}p^i(u,v) \quad \text{and} \quad m_A(s):=\min_{u \in A}e_{u,u}(s). \]
Let $Y_a(t)$ be the number of particles not occupying $a$ at time 0, other than the planted particle $w_{\mathrm{plant}} $, which visit vertex $a$ by time $t$, if at time 0 all of the vertices are activated. In the notation of \S~\ref{s:construction}, if the index of the planted walker at $\oo$ is 1, then  \[Y_a(t):=|\{(i,v) \in \N \times (V \setminus \{a\}) : i \le |\W_v| \text{, }a \in \{S_j^{v,i}: j \in [t] \},(i,v)\neq (1,\oo) \}|.\] Let $F_t $ be the collection of vertices visited by $\plant$ by time $t$. Clearly $|F_t| \le t+1 $. Let $Z_a(t):=1_{Y_a(t)=0}$. Observe that 
\[\SS(G) > \max \{t:\sum_{v \in V \setminus F_{t}} Z_v(t)>0\}. \]
By Poisson thinning 
\begin{equation}
\label{eq:eat0}
Y_a(t) \sim \Pois(\mu_a(t)),  \quad \text{where} \quad \mu_a(t):= \la \sum_{v \in  V \setminus \{a \}}\Pr_v[T_a \le t], \end{equation}
for all $a \in V$ and $t$. Note that if $G$ is regular then by reversibility $\Pr_v[T_a \le t] \le e_{v,a}(t)=e_{a,v}(t) $ for all $v$ and $t$ and hence
\begin{equation}
\label{e:muat1}
\mu_a(t) =\la \sum_{v \in V \setminus \{a \} }\Pr_v[T_a \le t] \le \la \sum_{v \in V \setminus \{a \} }e_{a,v}(t) =\la [t+1-e_{a,a}(t)] \le \la t  .
\end{equation} 
 When $G$ is regular we also have that $\Pr_v[T_a \le t] \le \frac{e_{v,a}(2t)}{e_{a,a}(t)}  \le\frac{e_{v,a}(2t)}{m_{A}(t)} = \frac{e_{a,v}(2t)}{m_{A}(t)} $ for all $a \in A ,v \in V$ and $t>0$.
Whence for all $a \in A$ and $t>0$
\begin{equation}
\label{eq:eat1}
\mu_a(t) \le \frac{\la}{m_A(t)} \sum_{v \in V \setminus \{a \} }e_{a,v}(2t) \le 2\la t/m_{A}(t).
\end{equation}
Similarly, for $G=\cT_{d,n}$ for all $a \in A \subseteq \cL $ and $t>0$ by reversibility we have that
\begin{equation} 
\label{e: treeexp}
\Pr_v[T_a \le t] \le \frac{e_{v,a}(2t)}{e_{a,a}(t)} \le \frac{e_{a,v}(2t)}{m_{A}(t)},  
\end{equation}
for all $v \in V$. 
Whence for $a \in A  \subseteq \cL $  as in \eqref{eq:eat1} we have 
\begin{equation}
\label{eq:eat1tree}
\mu_a(t) \le 2\la t/m_{\cL}(t).
\end{equation}
\begin{prop}
\label{prop:lower}
Consider the case that $G=(V,E)$ is a finite regular graph and  $A \subseteq V \setminus \{\oo \} $ is arbitrary  or that $G=\cT_{d,n} $ and $A \subseteq \cL \setminus \{\oo\} $. In the above notation, let \[ Z_{A}(t):=\sum_{a \in A \setminus \{\oo \}}Z_a(t)  \quad \text{where} \quad Z_a(t):=1_{Y_a(t)=0}, \]
 \[\alpha_{t}(A):=  \min \{2/m_{A}(t),1 \} \quad 
\text{and} \quad p_A(t):=\max_{a,b \in A:a \neq b}\Pr_a[T_b < t].\]
\begin{itemize}
\item[(1)] For every $a \in A $, $\la>0$ and $t \in \N$ we have that $\E_{\la}[Z_a(t)] \ge e^{- \la t \alpha_t(A)}$. 
\item[(2)] For every $\la>0$ and $t \in \N$ we have that 
\[ \PP_{\la}[Z_{A}(t) \le \sfrac{1}{2} \E_{\la}[Z_{A}(t)] ] \le \sfrac{4}{\E_{\la}[Z_{A}(t)]}+4(e^{2 \la tp_A(t)}-1).   \]
\item[(3)] For every $s>0$ and $t \in \N $ there exists $B_{s} \subseteq A $ such that  \begin{equation}
 \label{e:Bs3}
 |B_{s}| \ge \sfrac{|A|}{1+st^{2}} \quad \text{and} \quad p_{B_s}(t) \le \max_{a,b \in B_{s}:a \neq b}e_{a,b}(t) < \sfrac{1}{st}.
\end{equation}
In particular, 
\begin{equation}
\label{e:Zbs1}
\E_{\la}[Z_{B_{s}}(t)] \ge \frac{|B_s(t)|}{ e^{ \la t \alpha_t(B_{s}(t))}} \ge \frac{|A  |}{(1+st^{2}) e^{ \la t \alpha_t(A)}}, \end{equation}
\begin{equation}
\label{e:Zbs12}
\PP_{\la}[Z_{B_{s}}(t) \le \sfrac{1}{2} \E_{\la}[Z_{B_{s}}(t)]] < \sfrac{4}{\E_{\la}[Z_{B_{s}}(t)]} +4(e^{2\la/s}-1).
\end{equation}
\end{itemize}
\end{prop}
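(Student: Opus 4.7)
\medskip
\noindent\textbf{Proof proposal.} The plan is to handle the three parts separately, using that $Y_a(t)\sim\Pois(\mu_a(t))$ (already established in \eqref{eq:eat0}) together with the hitting-time bounds \eqref{e:muat1}, \eqref{eq:eat1}, and \eqref{eq:eat1tree}. For part~(1) I would simply combine $\E_\la[Z_a(t)]=\Pr[Y_a(t)=0]=e^{-\mu_a(t)}$ with the bounds $\mu_a(t)\le\la t$ from \eqref{e:muat1} and $\mu_a(t)\le 2\la t/m_A(t)$ from \eqref{eq:eat1} (or \eqref{eq:eat1tree} in the tree case, for $a\in\cL$), giving $\mu_a(t)\le \la t\,\alpha_t(A)$ and hence $\E_\la[Z_a(t)]\ge e^{-\la t\alpha_t(A)}$.

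For part~(2) my strategy is a Chebyshev/Paley--Zygmund bound after controlling the pair correlations of the $Z_a$'s. By Poisson thinning, the particle populations at distinct starting vertices are independent, and for each starting vertex $v$ the walks splitting according to whether they visit $a$, $b$, both, or neither produce independent Poisson counts. Hence
\[
\E_\la[Z_aZ_b]=\exp\!\Bigl(-\la\sum_{v\neq a,b}\Pr_v[T_a\wedge T_b\le t]-\la\Pr_a[T_b\le t]-\la\Pr_b[T_a\le t]\Bigr),
\]
and consequently
\[
\frac{\E_\la[Z_aZ_b]}{\E_\la[Z_a]\E_\la[Z_b]}=\exp\!\Bigl(\la\sum_{v\neq a,b}\Pr_v[T_a\le t,\,T_b\le t]\Bigr).
\]
Conditioning on which of $a,b$ is hit first and applying the strong Markov property gives $\Pr_v[T_a\le t,T_b\le t]\le p_A(t)(\Pr_v[T_a\le t]+\Pr_v[T_b\le t])$, and summing over $v\neq a,b$ together with \eqref{e:muat1} yields $\E_\la[Z_aZ_b]\le \E_\la[Z_a]\E_\la[Z_b]\,e^{2\la t p_A(t)}$. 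Summing over pairs, together with $\Var(Z_a)\le\E_\la[Z_a]$ since $Z_a\in\{0,1\}$, gives
\[
\Var(Z_A(t))\le \E_\la[Z_A(t)]+(e^{2\la t p_A(t)}-1)\bigl(\E_\la[Z_A(t)]\bigr)^2,
\]
and Chebyshev's inequality applied to $Z_A(t)$ at level $\E_\la[Z_A(t)]/2$ gives the claimed bound. The main technical point here is the factorization of $\E[Z_aZ_b]$ across starting vertices via Poisson thinning, and one must be careful about the status of $\plant$ and of particles starting at $a$ and $b$ themselves.

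For part~(3) I would apply a greedy independent-set argument. Consider the auxiliary graph $H$ on vertex set $A$ in which $a\sim b$ iff $e_{a,b}(t)\ge 1/(st)$. Since $\sum_{b\neq a}e_{a,b}(t)\le \sum_{b\in V}\sum_{i=0}^t p^i(a,b)-e_{a,a}(t)\le t$, each vertex has degree at most $st^2$ in $H$, so a greedy algorithm produces an independent set $B_s$ in $H$ of size $\ge |A|/(1+st^2)$. By construction $\max_{a\neq b\in B_s}e_{a,b}(t)<1/(st)$, and in either of the two settings considered we have $\Pr_a[T_b\le t]\le e_{a,b}(t)/e_{b,b}(t)\le e_{a,b}(t)$ (using reversibility and $e_{b,b}(t)\ge 1$, exactly as in \eqref{e: treeexp}), which yields $p_{B_s}(t)\le \max_{a\neq b\in B_s}e_{a,b}(t)<1/(st)$. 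The bound \eqref{e:Zbs1} then follows by applying part~(1) to each $b\in B_s$ and using $\alpha_t(B_s)\le \alpha_t(A)$ (since $B_s\subseteq A$ forces $m_{B_s}(t)\ge m_A(t)$), while \eqref{e:Zbs12} follows by feeding $p_{B_s}(t)<1/(st)$ into part~(2). The only mildly nontrivial step is the degree bound in $H$; the rest is bookkeeping.
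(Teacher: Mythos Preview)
Your proposal is correct and follows essentially the same route as the paper. Part~(1) is identical. For part~(2), both you and the paper reduce to bounding $\E_\la[Y_{a,b}(t)]$ (the mean of the Poisson count of particles visiting both $a$ and $b$) by $2\la t\,p_A(t)$; you do this by conditioning on which of $a,b$ is hit first and invoking $\mu_a(t)\le\la t$, while the paper instead observes that the number of particles located at $a$ (resp.\ $b$) at any fixed time step is $\Pois(\le\la)$ and sums over the first visit time to $\{a,b\}$. One small point: your citation of \eqref{e:muat1} for $\mu_a(t)\le\la t$ covers only the regular case; in the tree case (with $a\in\cL$) this bound still holds, since $\sum_{v}p^s(v,a)=\deg(a)\sum_v p^s(a,v)/\deg(v)\le 1$, but you should say so explicitly. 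Part~(3) is the same greedy argument as the paper, phrased as ``independent set in a bounded-degree auxiliary graph'' rather than the paper's iterative deletion, and your derivation of \eqref{e:Zbs1}--\eqref{e:Zbs12} from parts~(1)--(2) matches.
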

\begin{proof}
For $\E_{\la}[Z_a(t)] \ge e^{- \la t \alpha_t(A)}$ use \eqref{eq:eat0}-\eqref{eq:eat1tree}. Before proving part (2) we need an auxiliary calculation. Denote the  collection of particles, other than $\plant$, whose initial position is neither $a$ nor $b$, which picked a path which reaches both $a$ and $b$ by time $t$ by
\begin{equation*}
\begin{split}
Y_{a,b}(t):=&|\{(i,v) \in \N \times (V \setminus \{a,b\}) \\ & : i \le |\W_v| \text{, }a,b \in \{S_j^{v,i}: j \in [t] \},(i,v)\neq (1,\oo) \}|.
\end{split}
\end{equation*}
Let $Y_a(t) $ and $Y_b(t)$ be as in the previous page. Then by Poisson thinning we have that $W_{1}:=Y_a(t)-Y_{a,b}(t),W_{2}:=Y_b(t)-Y_{a,b}(t) $ and $W_{3}:=Y_{a,b}(t) $ are independent Poisson random variables. Thus
\begin{equation}
\label{eq:cov}
 \frac{ \E_{\la}[Z_a(t)Z_b(t)]}{\E_{\la}[Z_a(t)]\E_{\la}[Z_b(t)]}=\frac{\mathbb{P}_{\la}[W_{1}+W_{2}+W_{3}=0]}{\PP_{\la}[W_{1}+W_{3}=0]\PP_{\la}[W_{2}+W_{3}=0]} =e^{\E_{\la}[W_{3}]} \le e^{2 \la tp_A(t)},    \end{equation}
where in the last inequality we have used the fact that (under $\PP_{\la}$) for all $u \in A$ and $s \ge 1 $ the number of particles that have $u$ as the $s$th position of the walk they picked \[A_{u}(s):=|\{(i,v) \in \N \times V:i \le |\W_v| \text{, }u \in \{S_s^{v,i} \},(i,v)\neq (1,\oo) \}|\] has a Poisson distribution with mean $ \le \la$ (in the regular case it equals $\la$, while for the tree the inequality follows since $u$ is a leaf, as can be seen by increasing the density of particles at each site $w$ site to be $\la \deg (w)$ and observing that generally, starting at each vertex $w$ with a random number of particles which is distributed as  $\mathrm{Pois}( \la \deg (w))$,  independently of the rest of the vertices, is by Poisson thinning a stationary measure when the particles perform simultaneously independent SRWs)  and thus  \[\E_{\la}[W_{3}]= \mathbb{E}_{\la}[Y_{a,b}(t)] \le \sum_{s=1}^t \mathbb{(E}_{\la}[ A_{a}(s)]\Pr_a[T_b \le t-s ]+\mathbb{E}_{\la}[ A_{b}(s)]\Pr_b[T_a \le t-s ]) \le 2 \la t p_A(t). \]

We now prove part (2). We will show that
\[\mathrm{Var}_{\la} (Z_{A}(t)) \le \E_{\la}[Z_{A}(t)]+  (\E_{\la}[Z_{A}(t)])^{2}(e^{2\la tp_A(t)}-1), \] which by Chebyshev's inequality implies that
\[\PP_{\la}[Z_{A}(t) \le \sfrac{1}{2} \E_{\la}[Z_{A}(t)] ] \le \sfrac{4\mathrm{Var}(Z_{A}(t))}{(\E_{\la}[Z_{A}(t)])^2} \le \sfrac{4}{\E_{\la}[Z_{A}(t)]}+4(e^{2 \la tp_A(t)}-1), \]
as desired. Indeed
 $\mathrm{Var}_{\la} (Z_{A}(t)) \le \E_{\la}[Z_{A}(t)]+\rho $, where $\rho:=\sum_{a \neq b \in A\setminus \{\oo \}}\mathrm{Cov}(Z_a(t),Z_b(t)) $. By \eqref{eq:cov} we have that $ \rho  \le (\E_{\la}[Z_{A}(t)])^{2}(e^{2\la tp_A(t)}-1)$. 

 We now prove the existence of the set $B_s$ from part (3).  Note that $\sum_{b \in A \setminus \{a \} }e_{a,b}(t)\le t$ for every $a \in A$ and so
\begin{equation}
\label{e:stsquare}
\{b \in A \setminus \{a \}: e_{a,b}(t) \ge 1/(st)  \}| \le st^{2}.
\end{equation} 
Hence we obtain a set $B_{s}$ satisfying \eqref{e:Bs3} by systematically deleting some vertices from $A$ in a naive manner as follows:  At each stage $r$, if the current set is $A_r$ and there is some $a=a(r) \in A_r$ such that  $\{b \in A_{r}\setminus \{a \}: e_{a,b}(t) \ge \frac{1}{st}  \} \neq \eset $ then we set \[A_{r+1}:=A_r \setminus \{b \in A_{r} \setminus \{a \}: e_{a,b}(t) \ge \sfrac{1}{st}\}.\] Denote the set obtained at the end of this procedure by $B_s$.  Since for all $a,b$ we have that $e_{a,b}(t)=e_{b,a}(t)$ (when $G=\cT_{d,n} $ this follows from the fact that $A \subseteq \cL$) we get that $a(r) \in A(s) $ for every $s \ge r $. Using \eqref{e:stsquare}, it is easy to see that this implies that $|B_s| \ge |A|/ (1+st^2) $ and that $e_{a,b}(t) < \frac{1}{st} $ for all $a \neq b \in B_s$. Finally, we note that \eqref{e:Zbs1} and \eqref{e:Zbs12} are simple consequences of \eqref{e:Bs3} and parts (1) and (2).
\end{proof}

\medskip

\emph{Proof of Theorem \ref{thm:lower}.} We first prove \eqref{e:lower1}.  Let $F_{i} $ be the collection of sites visited by $\plant$ by time $i$. Recall that $\la$ may depend on $|V|$. However, we suppress this dependence from the notation.
Set $k:= \lceil \la^{-1} (\log |V| -4\log \log |V| - \log ( \max \{\sfrac{1}{\la},1 \} ) ) \rceil $. Recall that in the setup of \eqref{e:lower1} $ \lambda^{-1} ( \log |V|)^5 \le |V| $.  Hence we have that $|F_{k}| \le k+1 \le  \half |V|$, with the last inequality holding for all sufficiently large $|V|$. We may assume that $\la \le \log |V| $ as otherwise there is nothing to prove.
  Applying Proposition \ref{prop:lower} with $A=V \setminus F_{k} $,  $t=k$ and $s:=\la \log |V|$ yields that (by \eqref{e:Zbs1}) $\E_{\la}[Z_{B_{s}}(k)] \succsim \frac{|V |-|F|}{sk^{2} e^{ \la k}}  \succsim \log |V|   $  and so (by \eqref{e:Zbs12})  \[\PP_{\la}[Z_{B_{s}}(t) = 0]  \precsim 1/\log |V|+(e^{2\la/s}-1) \le  1/\log |V|+(e^{2/\log |V|}-1)   ,\] which tends to 0 as $|V| \to \infty$. This concludes the proof of \eqref{e:lower1}.

 We now prove \eqref{e:lower2}. Let $\delta \in (0,1)$.  Recall that   $t_{\la,\delta}(G):=\min \{s: \frac{2s \la}{m_V(s)}  \ge (1-\delta)\log |V| \}$, where 
$m_{V}(t)=\min_v \sum_{i=0}^{t}p^i(v,v)$. We seek to show that $\PP_{\la}[\SS(G) < t_{\la,\gd}(G) ]  \le \eps_{|V|} $ for some $\eps_n $ such that $\eps_n \to 0$ as $n \to \infty$, provided that    $\la=\la_{|V|}\gg |V|^{-\delta/11}$.

We first note that we may assume that $\la < \log |V| $ since otherwise we have that $t_{\la,\delta}=1$ (so there is noting to prove). 
 We apply Proposition \ref{prop:lower} with $A=V \setminus  F_t $ (with $F_t$ as above),  \[t= t_{\la,\gd}(G)-1 \quad \text{and} \quad  s:= \sfrac{ |V|^{\delta/2}}{8t^2}  .\] Observe that by assumption on $\la$ we have that $|F_t| \le t+1 \le |V|/2 $.
 As mentioned earlier,  for every regular graph $G=(V,E)$ we have that $t_{\la,0}(G) \le C \la^{-2}\log^2|V| $ (e.g.~\cite[Lemma 2.4]{PS}), and thus by the assumption that   $\la\gg |V|^{-\delta/11}   $   we have that $\la/ s \to 0$ as $|V| \to \infty$. Note that by the definition of  $t_{\la,\gd}(G)$ and our choice of parameters $2\la t/m_V(t) \le (1-\delta) \log |V| $.  Thus  $\E_{\la}[Z_{B_{s}}(t)] \succsim \frac{|V|}{st^{2} e^{ 2\la t/m_V(t) }}  \succsim |V|^{\delta/2}  $.  Hence by \eqref{e:Zbs12} \[\PP_{\la}[Z_{B_{s}}(t) =0] \precsim |V|^{-\delta/2}+(e^{2\la/s}-1)  ,\] which tends to 0 as $|V| \to \infty$.  \qed

\medskip

\subsection{Proof of the lower bound on $\SS(\cT_{d,n})$}
\label{s:lowersustree}
We first give a short proof for the case that $\la_n^{-1} \ll d^{(\sfrac{1}{3} - \eps)n} $ for some fixed $\eps>0$.  In the notation of Proposition \ref{prop:lower}, by \eqref{eq: SRWtree7} we have that $m_{\cL}(\ell) \asymp \log_{d} (d\ell) $ for all $\ell \in \N $ such that $\ell \le d^{n} $. Then for some choice of $c>0$ we have that  $t=t(n,\eps,\la_n):=\lfloor c \eps \la_{n}^{-1} n  \log ( \la_{n}^{-1} n)\rfloor$ satisfies  $2\la_{n} t/m_{\cL}(t) \le ( \sfrac{\eps}{4} \log d )n $ for all sufficiently large $n$.

 As before, let $F_t$ be the set of vertices visited by $\plant$ by time $t$. The proof in the case where  $\la_n^{-1} \ll d^{(\sfrac{1}{3} - \eps)n} $ is concluded by applying part (3) of Proposition \ref{prop:lower},  with $A=\cL \setminus F_{t} $, $t$ as above and $s=d^{n/3}$.  With these choices we have that $|A| >\half d^{n}$, $|B_s(t)| \ge \frac{|A|}{2st^2} \gg d^{\eps n}  $, $\mathbb{E}_{\la_n}[ Z_{B_{s}}(t)] \ge |B_s(t)| e^{-2\la_{n} t/m_{\cL}(t)} \gg d^{\eps n} \times d^{- \eps n/4}=d^{3 \eps n/4} $ and (by \eqref{e:Zbs12}) \[\PP_{\la_n}[ Z_{B_{s}}(t)=0] \precsim \frac{1}{\mathbb{E}_{\la_n}[ Z_{B_{s}}(t)] }+(e^{\la_{n}/s}-1) =o(1) .\]

\medskip   

We now consider the general case   $\la_n d^{n} \ge n \log d $ and $\la_n =o(n) $. By the previous case we can assume that $\la_n^{-1} \gg d^{n/4} $.  We shall reduce the problem to the following.

\begin{prop}
\label{p:killledwalk}
Let $d<s \le nd^{n}$.
Consider a random walk $(\widehat X_i)_{i \ge 0} $ on $\cL$ (the leaf set of $\cT_{d,n}$) which follows the following role. At each step it moves to a random leaf (chosen uniformly at random) with probability $\sfrac{1}{2s}$. Otherwise, if its current position is $x$ it moves to $y$ with probability $\Pr_x[X_{T_{\cL}}^+=y]$, where $(X_i)_{i \ge 0} $ is SRW on $\cT_{d,n}$. Let $\tau_{\mathrm{cov}} $ be the cover time (i.e.\ the first time by which every leaf is visited by the walk at least once). Denote expectation w.r.t.\ $\mathbf{\widehat X}:= (\widehat X_i)_{i \ge 0} $ by $\mathbb{\widehat E}$ and its distribution by $\mathbb{\widehat P} $. Then the following hold
\begin{itemize}
\item[(i)] $\max_{x,y \in \cL} \mathbb{\widehat E}_{x}[T_y] \precsim d^{n}\log_{d} s$. Conversely, if $x,y$ are two leafs    whose graph distance w.r.t.\ $\cT_{d,n}$ is at least $n$ (i.e.\ $\|x \wedge y\| \ge n/2$) then $ \mathbb{\widehat E}_{x}[T_y] \succsim d^{n}\log_{d} s$ (uniformly over all such $x,y$).
\item[(ii)] $\mathbb{\widehat E}[\tau_{\mathrm{cov}}] \asymp d^{n}n\log s  $.
\item[(iii)] $\tau_{\mathrm{cov}}/\mathbb{\widehat E}[\tau_{\mathrm{cov}}] \to 1  $ in probability as $n \to \infty$.
\item[(iv)] There exists some $c_0>0$ such that the number of times that the walk $\mathbf{\widehat X}$ moves to a random leaf chosen uniformly at random before time $\tau_{\mathrm{cov}}$ is $\whp$ at least $c_0 s^{-1} nd^{n}\log s$. 
\end{itemize}   
\end{prop}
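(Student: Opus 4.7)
The walk $\widehat{\mathbf X}$ is the leaf-induced chain on $\cL$ with an independent teleportation to a uniform leaf performed at each step with probability $1/(2s)$. By symmetry of $\cT_{d,n}$ the uniform distribution $\pi$ on $\cL$ is stationary for $\widehat{\mathbf X}$, and the teleportation mechanism ensures that $\widehat{\mathbf X}$ is distributed as $\pi$ by any time $\ge Cs$ with probability at least $1-e^{-C/2}$. The central estimate I would first establish is that starting from $\pi$, the expected size of the range of $\widehat{\mathbf X}$ after $s$ steps is $\asymp s/\log_d s$. The upper bound combines two facts: each step of the leaf-induced chain corresponds to one excursion of SRW on $\cT_{d,n}$ from $\cL$ back to $\cL$, whose expected length is $O(1)$ because the level process is a random walk biased toward $\cL$; and the expected range of SRW on $\cT_{d,n}$ in $t$ steps is $O(t/\log_d t)$ by the heat kernel estimates of Section~\ref{s:aux}. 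The matching lower bound follows from Proposition~\ref{cor:range}(i) applied with $A=\cL(\cT_y(d^k))$ for $k=\lceil\log_d s\rceil$ and $\delta=1$. By vertex-transitivity of the automorphism group of $\cT_{d,n}$ on $\cL$, for any specific $y\in\cL$ we then have $\mathbb{\widehat P}_\pi[y\in R_s]=\mathbb{\widehat E}_\pi[|R_s|]/d^n\asymp s/(d^n\log_d s)$.

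For part (i): the upper bound follows by partitioning the trajectory into epochs delimited by consecutive teleportations (each epoch is geometrically distributed with mean $2s$). Every epoch begins at a uniformly random leaf independently of prior epochs, so by the range estimate the probability of covering $y$ in any single epoch is $\succsim s/(d^n\log_d s)$; a geometric waiting argument yields $\mathbb{\widehat E}_x[T_y]\precsim d^n\log_d s$. For the lower bound when $\|x\wedge y\|\ge n/2$: apply the range estimate at time $t=cd^n\log_d s$ to get $\mathbb{\widehat P}_\pi[y\in R_t]=\mathbb{\widehat E}_\pi[|R_t|]/d^n\le Cc$, so for $c$ small $\mathbb{\widehat E}_\pi[T_y]\succsim d^n\log_d s$. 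One passes from $\pi$ to a specific starting point $x$ by using the first teleportation as a strong stationary time in the regime $s\ll d^n n$ (where teleportation occurs before the natural hitting time $\asymp d^n n$ with high probability); and in the complementary regime $s\asymp d^n n$ by noting that the leaf-chain hitting time from $x$ to $y$ is already $\asymp d^n n\asymp d^n\log_d s$ via the commute-time identity for SRW on $\cT_{d,n}$, while the $O(1)$ teleportations that occur on that time scale reduce the hitting time by at most a constant factor.

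Part (ii) follows from Matthews' bounds combined with part (i). The upper bound $\mathbb{\widehat E}[\tau_{\mathrm{cov}}]\precsim d^n n\log s$ is immediate from part (i) since $\log|\cL|\asymp n\log d$. For the lower bound I would choose $A\subset\cL$ consisting of one representative leaf from each induced subtree rooted at level $\lceil n/2\rceil$, giving $|A|=d^{\lceil n/2\rceil}$ with $\|x\wedge y\|\ge n/2$ for all $x\ne y\in A$; Matthews' lower bound then gives $\mathbb{\widehat E}[\tau_{\mathrm{cov}}]\succsim d^n\log_d s\cdot\log|A|\asymp d^n n\log s$. Part (iii) reduces to a coupon-collector-type concentration, which I would derive by analyzing the sequence of approximately independent random subsets of $\cL$ covered by successive epochs: since after $k$ teleportations the probability that any fixed leaf remains unvisited is at most $(1-cs/(d^n\log_d s))^k$ by the epoch estimate, a union bound over $\cL$ together with a second-moment lower bound on the number of uncovered leaves (Chung--Erd\H{o}s style) gives concentration of both tails of $\tau_{\mathrm{cov}}$ around its mean. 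Part (iv) is then immediate: the number of teleportations in $[0,\tau_{\mathrm{cov}}]$ is $\mathrm{Binomial}(\tau_{\mathrm{cov}},1/(2s))$, which by part (iii) and Chernoff's inequality is $\whp$ at least $c_0 s^{-1}nd^n\log s$.

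The hard part will be the concentration in (iii) sharp enough to yield $\tau_{\mathrm{cov}}/\mathbb{\widehat E}[\tau_{\mathrm{cov}}]\to 1$ in probability: the random subsets covered by successive epochs all start at exactly uniform leaves but are nonetheless correlated through the geometry of $\cT_{d,n}$, so a naive Chernoff bound on their union will miss the correct constant. The route I propose is to thin each epoch by retaining only a \emph{single} sample -- the unique leaf to which the walk jumps at the next teleportation -- and show that the leaves covered by the intermediate excursion are close to uniformly distributed, reducing the problem to a standard coupon collector on $d^n$ items and recovering the sharp constant.
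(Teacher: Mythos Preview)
Your treatment of (ii) and (iv) matches the paper's: Matthews' method with the set $A$ of $d^{\lceil n/2\rceil}$ well-separated leaves, and a Chernoff bound on the binomial number of teleportations once (iii) is in hand.

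For part (i) you take a genuinely different route. The paper exploits transitivity of $\widehat{\mathbf X}$ on $\cL$ to write
\[
\mathbb{\widehat E}_x[T_y]=d^n\sum_{i\ge 0}\bigl(\widehat P^{\,i}(y,y)-\widehat P^{\,i}(x,y)\bigr)
= d^n\,\mathbb{\widehat P}_x[T_y>\rho]\,\mathbb{\widehat E}_y[N(y)],
\]
where $\rho$ is the first teleportation time and $N(y)$ the number of visits to $y$ before $\rho$. The whole proposition then reduces to two clean estimates: $\mathbb{\widehat E}_y[N(y)]\asymp\log_d s$ (via the return-time heat kernel bound $\sum_{i\le t}p^i(y,y)\asymp\log_d(dt)$) and $\mathbb{\widehat P}_x[\rho<T_y]\succsim 1$ when $\|x\wedge y\|\ge n/2$ (via a second-moment argument on the SRW hitting time). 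Your epoch-based argument for the upper bound is fine, but your lower bound needs more care: you correctly get $\mathbb{\widehat E}_\pi[T_y]\succsim d^n\log_d s$ from the range estimate, but ``passing from $\pi$ to a specific $x$ via the first teleportation as a strong stationary time'' is not quite an argument --- after teleportation the walk is uniform, but you have already conditioned on not hitting $y$, and the bookkeeping of how much expected time is lost or gained in that conditioning is exactly what the Green-function identity handles for free. The paper's route is shorter here.

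The real gap is part (iii). You flag it as the hard part and propose a coupon-collector analysis of the subsets covered by successive epochs, together with a thinning trick to decorrelate and recover the sharp constant. This is unnecessary: the paper simply invokes Aldous's classical result \cite{covercon} that whenever $\mathbb{E}[\tau_{\mathrm{cov}}]/\max_{x,y}\mathbb{E}_x[T_y]\to\infty$, one has $\tau_{\mathrm{cov}}/\mathbb{E}[\tau_{\mathrm{cov}}]\to 1$ in probability. From (i) and (ii) this ratio is $\asymp n\log d\to\infty$, and (iii) follows in one line. Your thinning proposal --- retaining only the teleportation targets and arguing the intervening ranges are close to uniform --- would, even if it worked, recover at best concentration up to constants, not the sharp constant $1$; and the claim that it ``reduces to standard coupon collector'' ignores that the ranges of distinct epochs are neither independent nor uniformly distributed over $\cL$.
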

Let   $t'=t'(n,\la_n):=\lfloor c ' \la_{n}^{-1} n  \log ( \la_{n}^{-1} n)\rfloor$ for some $c' >0$ to be determined later.
Before proving Proposition \ref{p:killledwalk} we first explain how it implies the assertion that $\SS(\cT_{d,n}) \ge t' $ $\whp$. 
We will show that if initially all particles are activated and each particle walks for $t'$ steps, then $\whp$ some leaf will not be visited by any particle.

 Recall that conditioned on the total number of particles being $m$, each particle, other than $\plant$, starts at a random vertex picked (independently) uniformly at random. In particular, the first leaf it visits has the uniform distribution on the leaf set (i.e.\ starting from the uniform distribution on the vertex set, the hitting distribution of the leaf set is the uniform distribution on $\cL$). Thus  we may assume that each particle starts at a random leaf chosen (independently) uniformly at random. Indeed, when a particle reaches the leaf set it has at most $t'$ remaining steps to perform by time $t'$. Increasing the number of remaining steps to be precisely $t'$ can only increase the probability that every leaf is visited by some particle by time $t'$. 

Similarly, observe that assuming that the initial position of $\plant$ is a leaf can only increase the chance that all leafs are visited by some particle. Hence we may assume that the initial position of $\plant$ is a leaf. By symmetry, we may assume further that its initial position is chosen uniformly at random from $\cL$. 

Next, we argue that by the concentration of the Poisson distribution we may assume that there are  $\lceil 2 \la_{n} d^n \rceil $ particles (as increasing the number of particles can only increase the probability that every leaf is visited by time $t'$) each picking its initial position to be a leaf chosen uniformly at random. 

We now further increase the number of particles to   $\lceil 8 \la_{n} d^n \rceil $ (each particle still starts independently at some leaf chosen  uniformly at random), but now each particle  performs a random number of steps, which is geometrically distributed with mean $2t'$.  We may do so as  each particle has probability at least $1/2$ of performing at least $t'$ steps, and so $\whp$ at least    $\lceil 2 \la d^n \rceil $ particles will walk for at least $t'$ steps. 

We may further increase the duration of their walks by making the number of visits to the leaf set of each walk (rather than the total number of steps of a walk) be geometrically distributed with mean $2t'$. For our purposes we may (and shall) omit all steps in which a particle is not at the leaf set, to obtain a new walk on the leaf set.  
Observe that we may sample the walk  $(\widehat X_i)_{i \ge 0} $ from Proposition \ref{p:killledwalk} with $s=t'$ by concatenating together such walks. By part (iv) of Proposition \ref{p:killledwalk} the number of such walks one has to concatenate until every leaf is visited at least once is $\whp$ at least $ c_0 (t')^{-1}n d^{n}\log t' \ge    \lceil 8 \la_{n} d^n \rceil   $, where the last inequality holds (by the choice of $t'$ and some algebra) provided that $c'$ is taken to be sufficiently small. Hence $\lceil 8 \la_{n} d^n \rceil$ such particles  $\whp$ do not visit every leaf.             
 \qed 

\subsection{Proof of Proposition \ref{p:killledwalk}} 
\begin{proof}
We first note that part (ii) follows from part (i) via Matthew's method (e.g.\ \cite[Theorem 11.2 and Proposition 11.4]{levin2009markov}; For $\mathbb{\widehat E}[\tau_{\mathrm{cov}}] \succsim d^{n}n\log s$ use \cite[Proposition 11.4]{levin2009markov} with $A$ being a collection of $d^{\lfloor n/2 \rfloor }$ leaves whose $\lceil n/2 \rceil$th ancestors are all distinct). Combining (i) and (ii) we get that $\mathbb{\widehat E}[\tau_{\mathrm{cov}}] \gg \max_{x,y}\mathbb{\widehat E}_{x}[T_y]  $, which by a classic result of Aldous \cite{covercon} implies (iii). Part (iv) in turn follows by combining parts (ii) and (iii) (this is left as an exercise). It remains to prove part (i). 

Denote the transition matrix of  $\mathbf{\widehat X}:= (\widehat X_i)_{i=0}^{\infty}$ by $\widehat P $.  The  chain $\mathbf{\widehat X}$ is transitive (i.e.\ for all $x,y \in \cL$ there exists a bijection $\phi:\cL \to \cL$ such that $\phi(x)=y $ and for all $a,b \in \cL$ we have that $\widehat P (a, b)=\widehat P (\phi (a), \phi (b)) $). In particular, its stationary distribution is the uniform distribution on $\cL$. Let $Z_{a,b}:=\sum_{i \ge 0} (\widehat P^{i} (a, b)-d^{-n}) $. It is classical (\cite[Lemma 2.12]{aldous}) that
\begin{equation}
\label{e:ZabZ} \mathbb{\widehat E}_{x}[T_y]=d^n(Z_{y,y}-Z_{x,y})=d^{n}\sum_{i \ge 0} (\widehat P^{i} (y, y)-\widehat P^{i} (x, y)) \end{equation}
(where the last equality follows from the fact that by transitivity\footnote{Transitivity is used to argue that $|\widehat P^{2i} (a, b)-d^{-n}| \le |\widehat P^{2i} (a, a)-d^{-n}|  $  for all $i$ (e.g.\ \cite[Equation (3.59)]{aldous})}  for all $a,b \in \cL$ we have  \[\half \sum_{i \ge 0} |\widehat P^{i} (a, b)-d^{-n}| \le  \sum_{i \ge 0} |\widehat P^{2i} (a, b)-d^{-n}| \le~ \sum_{i \ge 0} |\widehat P^{2i} (a, a)-d^{-n}| \le \sum_{i \ge 0} \beta^{i} < \infty,  \] where $\beta$ is the second largest  eigenvalue of $\widehat P^2$).  Let $\rho$ be the first time at which the walk moves to a random position chosen uniformly at random. We couple $\mathbf{\widehat X} $ with SRW on $\cT_{d,n}$, $(X_i)_{i=0}^{\infty}$, by time $\rho-1$ in an obvious manner. Namely, we let $\widehat X_i $ be the location of the $i$th visit of  $(X_i)_{i=0}^{\infty}$ to $\cL$ for $i < \rho$, where $\rho$ is independent of  $(X_i)_{i=0}^{\infty}$. Let $N(a) $ be the number of visits of  $\mathbf{\widehat X} $  to $a$ by time $\rho-1 $.

 Then $\rho \sim \mathrm{Geometric}(\sfrac{1}{2s}) $ and by the memoryless property of the Geometric distribution and the Markov property (used in the second equality) we get that
\[\sum_{i \ge 0} (\widehat P^{i} (y, y)-\widehat P^{i} (x, y))=\mathbb{\widehat E}_y[N(y)]-\mathbb{\widehat E}_x[N(y)]= \widehat \Pr_x[  T_{y} > \rho ] \mathbb{\widehat E}_y[N(y)].  \]
Using \eqref{e:ZabZ},  to conclude the proof it remains to show that
 \begin{equation}
 \label{e:Wald4} \mathbb{\widehat E}_y[N(y)] \asymp \log_d s, \end{equation}
and  that for every pair of leafs $x,y $ such that $\|x \wedge y\| \ge n/2 $ we have that  $\mathbb{\widehat P}_x[\rho < T_y ] $ is (uniformly) bounded away from 0. We first verify that for such  $x,y $  indeed $\mathbb{\widehat P}_x[\rho < T_y ] \succsim 1 $. 

Using network reductions  it is not hard to verify that  for SRW on $\cT_{d,n}$ we have that the effective resistance $x \neq y \in \cL$ is $2\|x \wedge y \| $ and so  $\mathbb{E}_x[T_y] \asymp d^n \|x \wedge y \|  $, uniformly  for all  $x \neq y \in \cL$.\footnote{Implicitly, we are using the fact that by symmetry, $\mathbb{E}_x[T_y]=\mathbb{E}_y[T_x]= \sfrac{1}{2}\mathbb{E}_x[T_{yx}] $  for all  $x,y \in \cL $, where $T_{yx}$ is the commute-time from $x$ to $y$ (i.e.\ the time to hit $y$ and then return to $x$; Indeed, in general the expectation of the commute-time from $x$ to $y$    can be written in terms of the effective resistance between $x$ and $y$, e.g.\ \cite[Chapter 9]{levin2009markov}).} 

Using the Markov property $\ell-1$ times in conjunction with Markov's inequality we get that (for SRW on $\cT_{d,n}$) $\Pr_x[T_y >2 \ell \max_{a,b} \mathbb{E}_a[T_b] ] \le 2^{-\ell} $ for all $\ell \in \N$ and hence for all $x,y$ such that $\|x \wedge y \|$ we have that  $\mathbb{E}_x[T_y^{2}] \asymp \max_{a,b} (\mathbb{E}_a[T_b])^2$ and so   $\mathbb{E}_x[T_y^{2}] \asymp (\mathbb{E}_x[T_y])^2$. It follows from the Paley-Zygmund inequality that $\Pr_x[T_y> c_0 d^n n] \ge c_1>0 $ for some absolute constants $c_0,c_1>0$. It is easy to see  that this implies that starting from $x$  for some $c_0',c_{1}'>0$ the probability that the number of returns to the leaf set prior to $T_y$ is at least $c_0' d^n n$ is at least $c_1'$. In particular, (using $s \le nd^{n}$ and the aforementioned coupling)  the walk  $(\widehat X_i)_{i \ge 0} $ satisfies that started from $x$ the probability that  $ \rho \le  T_{y} $ is bounded from below (uniformly in all $x,y \in \cL$ with  $\|x \wedge y \| \ge n/2$).

We now prove \eqref{e:Wald4}. 
 As we later verify, since $\rho$ is independent of $(X_i)_{i \ge 0} $ (in the aforementioned coupling) it suffices to show that for all $t \ge 1$ we have that
 \begin{equation}
 \label{e:Wald4} M(t):= \mathbb{ E}_y[\text{number of visits to $y$ by the } t\text{-th visit to the leaf set}] \asymp \log_d (dt)+d^{-n}t. \end{equation}
By \eqref{eq: SRWtree6''} we have for SRW on $\cT_{d,n}$ that $A(t):=\mathbb{ E}_y[\text{number of visits to $y$ by time } t] \asymp \log_d(dt)+d^{-n}t$ for all $t \ge 1$.  Thus for all $t \ge 1$ 
\[M(t)\ge A(t)\asymp  \log_d (dt)+d^{-n}t .  \]
Using the fact that the total amount of time that SRW on $\cT_{d,n}$ starting from $\cL$ spends at $\cL$ by time $t$ is highly concentrated around its mean (this follows from the decomposition from the proof of \eqref{eq: SRWtree5}) which is $\ge t/4$ (see the paragraph following \eqref{e:leafmon} for a justification of this lower bound), it is not hard to verify that for all $t \ge 1 $ 
\[M(t) \precsim A(8t)\asymp \log_d(dt)+d^{-n}t .  \]
Finally, using the independence of $\rho $ and $(X_i)_{i \ge 0}$ we get that
\[\mathbb{\widehat E}_y[N(y)]= \sum_{\ell=1}^{\infty}\mathbb{\widehat P}_y[\rho = \ell] M( \ell) \asymp  \sum_{\ell=1}^{\infty}\mathbb{\widehat P}_y[\rho = \ell]  \log_d (d \ell)+  \sum_{\ell=1 }^{\infty}\mathbb{\widehat P}_y[\rho = \ell]d^{-n} \ell \asymp \log_d s , \]
where we used $ A(t) \le M(t) \precsim A(8t) $.
\end{proof} 
 
 \section{The frog model on the complete graph}
\label{s:Kn}
Recall that in the coupon collector problem (e.g.~\cite[Proposition 2.4]{levin2009markov}) there are $n$ coupons labeled by $[n]$. At each step a coupon is picked from the uniform distribution on $[n]$. 
If $\tau_i$ is the first time at which $i$ different coupons were collected,  then for all $1 \le i < n $ the distribution of $\eta_i:=\tau_{i+1} - \tau_i $ is Geometric with parameter $\frac{n-i}{n}$. Moreover,   $\eta_0,\eta_1,\ldots,\eta_{n-1}$ are independent. The coupon collector time is defined to be $\tau_n$. It is  concentrated around time $n \log n $.

 It is not hard to see that for all $i<n/6$ we have that the probability that $\tau_{3i}-\tau_i> 4i $ decays exponentially in $i$. Similarly, if $\eps<1/10$ is fixed then there exists some $C'>0$ such that the probability that $\tau_{\lceil (1-\eps/6)n \rceil}-\tau_{\lceil n/100 \rceil}>C'n | \log \eps | $ decays exponentially in $n$.

We note that the same remains true if at each step the next coupon is picked from the uniform distribution over $[n] \setminus \{\text{the last coupon that was picked} \} $   (the only difference is that now the marginal distribution of  each $\eta_i$ is Geometric with parameter $\frac{n-i}{n-1}$ for $i \ge 1$).  

\subsection{Proof for Proposition \ref{prop:Kn}} 
Let $(\la_n)_{n \in \N}$ be such that $\lim_{n \to \infty} \la_n n = \infty $. We first show that
\[\forall \eps \in (0,1), \quad \lim_{n \to \infty} \PP_{\la_{n}}[(1-\eps)\la_{n}^{-1} \log n  \le  \SS(K_n) \le \lceil (1+\eps)\la_{n}^{-1}\log n \rceil ]=1. \]

Fix $\la= \la_n >0$ and $\eps \in (0,1/10)$. Let $\la$ be the density of particles. Recall that $\W_i$ is the collection of particles occupying site $i$ at time $0$ and that for $B \subseteq [n]$ the collection of particles which initially occupy $B$ is denoted by $\W_{B}:=\cup_{b \in B}\W_b $. Label the vertex set by $[n]$ such that $\oo=1 $. 

  Observe that by symmetry assuming that all of the $\Pois(\la n)$ particles in $\W_{[n]} \setminus \{\plant\} $ start from vertex $1$, and are thus activated at time $0$, can only increase the probability that $\SS(K_n) <(1-\eps) \la^{-1} \log n $.\footnote{Recall that $\SS(K_n)$ is defined as the minimal lifetime for which every site is visited before the process dies out, as opposed to the minimal lifetime for which each particle is activated.} To see this, observe that when a particle is activated in the usual setup, moving this particle to site $1$ does not change the law of $\SS(K_n)$ (as it does not change the law of the identity of  the vertices activated by this particle).

By the concentration of the Poisson distribution around its mean we have that $\whp$ $|\W_{[n]}| \le (1+\eps/2) \la n  $. The claim that $\whp$ $\SS(K_n) \ge (1-\eps) \la^{-1} \log n $ now follows from the solution to the coupon collector problem  and the fact that for all  $\eps \in (0,1/10)$ \[[(1-\eps) \la^{-1} \log n] \cdot [(1+\eps/2) \la n] < (1-\sfrac{\eps}{4} )n \log n.\]      

 We now show that   $\whp$ $\SS(K_n) \le \lceil (1+\eps) \la^{-1} \log n \rceil $.  Consider the case that the lifetime of the particles is \[s_n:=\lceil \la^{-1} (1+\eps)\log n \rceil .\]  
For some constant $C>0$ to be determined later, we set \[M=M(\eps,\la):=C\lceil \sfrac{1}{\la}   | \log \eps |\rceil .\]
We now consider a dynamics in which the particles in $\W_1$ are initially active and have lifetime $s_{n} $ while the rest of the particles have lifetime $M$. Crucially, in the  variant of the frog model we now consider, if a particle $w'$ is activated by a particle $w$, then $w'$ begins its walk only once the particle $w$ has finished performing its walk (which is either of length $s_{n}$ or of length $M$, depending on whether $w \in \W_1 $ or not).  We think of $\W_1$ as the $0$th generation. We think of the particles they activated during their length $s_n$ walks as the first generation.  Similarly, for all $i \ge 1$ we think of the particles activated by the particles from the $i$th generation (during their length $M$ walk) as the $(i+1)$th generation. 

Denote the $i$th generation by $ \mathcal{G}_i$ and set  $\mathcal{G}:=\bigcup_{i \ge 1} \mathcal{G}_i$. We will later argue that if we let the particles in $\mathcal{G} $ perform additional $s_n-M $ steps (so that the total number of steps they perform is $s_n$) then $\whp$ every vertex is visited by some particle in  $\mathcal{G} $. As we later explain in more detail, by a coupon collector argument this boils down to arguing that $\whp$ $(s_n-M) |\mathcal{G} | \ge (1+c \eps)n \log n $ for some constant $c>0$. By the concentration of the Poisson distribution this in turn boils down to arguing that $\whp$ the vertices which are visited by the above dynamics (i.e.\ the initial positions of the particles in $\GG$) is $\whp$ of size at least $(1-c' \eps)n $ for some small $0<c'<1$ (below we call this set $U$).  In order to do so we now describe the evolution of this model as an exploration process. The set $\W_{A_i}$ below is the $i$th generation in the above interpretation.

Stage 1: Let $A_0=\{1\}$. First expose the first $s_n $ steps of the walk of each $w \in \W_1=\W_{A_0}$.   Let $D_1$ be the union of the ranges of those walks. Set $U_1:=\{1\}=A_0 $ and $A_1:=D_1 \setminus U_1$. 

Stage 2:  Expose the first $M$ steps of each $w \in \W_{ A_1}$. Denote the union of the ranges of these walks by $D_2$. Set $U_2:=U_1 \cup A_1 = A_0 \cup A_1 $ and $A_2:= D_2 \setminus U_2 $. 

Stage $i+1$: As long as $A_i $ is non-empty continue as in stage 2. Namely,  expose the first $M$ steps of each $w \in \W_{A_i}$. Denote the union of the ranges of these walks by $D_{i+1}$. Set $U_{i+1}:=U_{i} \cup A_i = \cup_{j=0}^i A_j $ and $A_{i+1}:= D_{i+1} \setminus U_{i+1} $.   

\medskip

Let $i_*:=\min\{i:A_{i} = \eset \} $ be the first stage $i$ at which no new vertices (i.e.\ ones not belonging to $U_{i}  $) were discovered by the length $M$ walks of the particles in $\W_{A_{i}} $.
Let $U$  be the set of all vertices that are activated by the above dynamics. Crucially, the set $U$ does not change as a result of the convention that each particle begins its walk only after the particle which activated it has finished performing its walk (of length $s_n$ for generation 0 or length $M$ for the rest of the particles). However, with this convention we have that $U=U_{i_*}$.
We will show that $\whp$ we have that 
\begin{equation}
\label{e:Uislarge}
|U \setminus \{1\} |>(1-\eps/6)n \quad \text{and} \quad |\W_{U \setminus \{1\} } | \ge(1-\eps/6)|U \setminus \{1\} | >(1-\eps/3)n \la.
\end{equation}
We first explain how this implies the conclusion that $\SS(K_n) \le s_n $ $\whp $. Indeed this follows  by letting the particles in $\W_{U \setminus \{1\} } $ perform additional $s_n-M$ steps and arguing that when both inequalities in \eqref{e:Uislarge} hold (using $\eps < 1/10$) for sufficiently large $n$ we have that \[(s_n-M)|\W_{U \setminus \{1\}} | \ge (s_n-M) [\la(1-\eps/3)n] \ge (1+\eps/6)n \log n, \] and so by comparison with the coupon collector problem, $\whp$ the particles in $\W_{U \setminus \{1\} } $  will together cover the graph in these additional $s_n-M$ steps.

We first treat the case that $\la^{-1}  \log n > n/10 $. In this case, it is easy to see that by the above discussion and by the concentration of the Poisson distribution around its mean, $\whp$ we have that $|A_1|>n/100$ and that  $|\W_{A_{1}}| >(1-\eps/6) \la |A_1|$. Recall that in the coupon collector problem the probability that $\tau_{\lceil (1-\eps/6)n \rceil}-\tau_{\lceil n/100 \rceil}>C'n | \log \eps | $ decays exponentially in $n$.
  Thus $C$ in the definition of $M=C\lceil \sfrac{1}{\la}   | \log \eps |\rceil$ is taken to be sufficiently large then $\whp$ (the randomness is over $|A_1|$) \[[(1-\eps/6) \la |A_1|] \cdot M > C'n | \log \eps |  \] and so $\whp$ $|A_1 \cup A_2|>(1-\eps/6)n $. Using again the concentration of the Poisson distribution, we also have that  $|\W_{A_{2}}| >(1-\eps/6)\la|A_2|$ $\whp$. This implies \eqref{e:Uislarge} when  $\la^{-1}  \log n > n/10 $. 

We now consider the case that  $\la^{-1}  \log n \le n/10 $ and $\la \ll \log n $.   
 Note that $\whp$ $|A_1|> \half \la^{-1} \log n$  and (as before) $|\W_{A_1} | \ge (1-\eps/6)\la |A_1|   $. Let $j^*:=\inf \{j : |A_j| \ge n/100 \}$. We now argue that if $C$ from the definition of $M$ is taken to be sufficiently large then $\whp$ the following hold (1) $j^{*} \le \log_2 n$, (2)   $|A_1| \le \half |A_2| \le \sfrac{1}{4}|A_3| \le \cdots \le 2^{-(j^*+1)}|A_{j^{*}}|$ and (3) for all $i \le j^*$ we have $|\W_{A_i} | \ge (1-\eps/6)\la |A_i|$. Once this is established, the proof of \eqref{e:Uislarge}  is concluded in a similar fashion to the case that  $\la^{-1}  \log n > n/10 $.

 To see that (1)-(3) indeed hold $\whp$ observe that (2) implies (1) and that conditioned on  $|A_1|> \half \la^{-1} \log n$,   $|\W_{A_1} | \ge  (1-\eps/6)\la |A_1|   $ and 
 on    $|A_1| \le \half |A_2| \le \sfrac{1}{4}|A_3| \le \cdots \le 2^{-(i+1)}|A_{i}|<n/100$ we have that the probability that $|A_{i+1}|<2  |A_{i}|  $ is exponentially small in $\la |A_i|  \ge \la 2^{i} |A_1| \ge 2^{i-1} \log n  $ (because in the coupon collector problem for all $k<n/6$ we have that the probability that $\tau_{3k}-\tau_k> 4k $ decays exponentially in $k$). Conditioned further also on  $|A_{i+1}| \ge 2  |A_{i}|$,  the probability that  $|\W_{A_{i+1}} | < (1-\eps/6)\la |A_{i+1}|$ is exponentially small in $\la |A_{i+1}|\le 2^{i} \log n$. Thus the probability that there is some $i<j^*$ which violets the conditions   $|A_{i+1}| \ge 2  |A_{i}|  $  and   $|\W_{A_{i+1}} | \ge (1-\eps/6)\la |A_{i+1}|$ is $o(1)$.

\medskip

We now treat the case that $\la \succsim \log n $. In fact, we shall treat the case that $\la \gg 1 $. In this case, we may replace $M$ with $\widehat M=1$ in the above analysis (this makes a difference if  $\la \succsim \log n $, and then possibly $M > \lceil (1+\eps) \la^{-1} \log n \rceil $). Fix some $\eps \in (0,1/10)$.
Let $\ell^*:= \ell^*(\eps,\la,n):=\inf \{j : |A_j| \ge C_0 \la^{-1} n \log (1/\eps) \}$. 

 Much as before, we have that $\whp$  the following hold (1) $\ell^*\le \log_{\la/2} n$, (2)   $|A_1| \le \sfrac{2}{\la} |A_2| \le \sfrac{2^{2}}{\la^2}|A_3| \le \cdots \le (\sfrac{2}{\la})^{j^*+1}|A_{\ell^*}|$ and (3) for all $i \le \ell^*$ we have $|\W_{A_i} | \ge (1-\eps/6)\la |A_i|$. Conditioned on this, we pick an arbitrary collection $A \subseteq A_{\ell^*} $ of size $\lceil C_0 \la^{-1} n | \log \eps| \rceil$. By the concentration of the Poisson distribution, $\whp$ $|\W_A|> \half C_0  n |\log \eps|$. We expose the first  step of the particles in $\W_A$ and denote by $J$ the collection of vertices  visited by at least one particle from  $\W_A$ during their first step. By the analysis of the coupon collector problem, provided that $C_0$ is taken to be sufficiently large, we have that $\whp$ $|J \setminus A | \ge (1-\eps/6)n$ and (as before)  $|\W_{J \setminus A  } | \ge (1-\eps/6)\la |J \setminus A |$. From here the proof is concluded as before.

\medskip

We now prove \eqref{e:rem1.2}. Fix $\eps=1/20$. Observe that in the regular frog model the time until all of the vertices in  $U=\bigcup_{i=1}^{j^*} A_i   $  are activated is at most $s_n+j^*M $.\footnote{Implicitly, we are considering a coupling of it with the aforementioned variant of the model, in which the number of particles at each site is the same for both models, and also the walk picked by each particle is the same.} It follows  from the above analysis that $\whp$ we have that $\CT(K_n) \le 2 s_n + j^*M  \le 2 s_n + M \log_2 n $. This concludes the proof when $\la \le 2 $. 

Now consider the case that $\la>2$. By monotonicity it suffices to consider the case that $\la \le \sqrt{n} $. Using the same reasoning as above one can show that in the above notation (if   $C$ in the definition of $M=C\lceil \sfrac{1}{\la}   | \log \eps |\rceil$ is taken to be sufficiently large) we have that  $\whp$ the following hold (1) $j^{*} \le \log_{\la} n$, (2)   $|A_1| \le \sfrac{1}{\la} |A_2| \le \sfrac{1}{\la^2}|A_3| \le \cdots \le \sfrac{|A_{j^{*}}|}{\la^{j^*+1}}$ and (3) for all $i \le j^*$ we have $|\W_{A_i} | \ge (1-\sfrac{\eps}{6})\la |A_i|$. Thus as above $\CT(K_n) \le 2 s_n + j^*M \le 2s_n + M \log_{\la} n $.
  \qed

\section*{Acknowledgements}
The author would like to thank Itai Benjamini for suggesting the problems studied in this paper and for many useful discussions. The author would also like to thank Matthew Junge and Tobias Johnson for helpful discussions. Finally the
author would like to express his gratitude to the anonymous referee for suggesting improvements to the presentation of this work. 

\bibliographystyle{amsplain}
\bibliography{Frog}

\section{Appendix: Transition probabilities, hitting-times and range growth estimates for SRW on finite $d$-ary trees}
\label{s:aux}
In this section we develop some auxiliary results concerning SRW on $\cT_{d,n} $. We start by giving some results concerning the distribution of the hitting time of the root (which can be used to study the hitting time distribution of an arbitrary $y \in \VV_{d,n}$ starting from any vertex in $\cT_y$). We later give some sharp estimates on $p^t(x,y)$ and then use them to study the range of SRW on $\cT_{d,n}$.
 
\medskip

We start by recalling some notation and introducing some further notation which shall be used throughout the section.
Fix some $n,d$. Recall that $\mathbf{r}$ is  the root of $\mathcal{T}_{d,n}=(\VV_{d,n},\EE_{d,n})$,  $\cL_i$ is its $i$th level and $\cL$ is its leaf set. Recall that the distances of $v \in \VV_{d,n}$ from the root and from the leaf set are denoted, respectively, by $|v|$ and $\|v\|$. 

Let $(X_s)_{s=0}^{\infty}$ be SRW on $\mathcal{T}_{d,n} $. Let $Y_s:=|X_s|$. Then $(Y_s)_{s=0}^{\infty}$ is a birth and death chain on $]n[:=\{0,1,\ldots,n\}$ with transition matrix $Q(0,1)=1=Q(n,n-1)$ and $Q(i,i-1)=(d+1)^{-1}=1-Q(i,i+1)$ for $i \in [n-1]$. Denote its law starting from state $i$ by $\Pr_i^{]n[}$, the corresponding expectation by $\mathbb{E}_i^{]n[} $ and  its stationary distribution by $\pi$, where 
\begin{equation}
\begin{split}
\label{e:piofY}
&\pi_0:=\sfrac{d}{2|\EE_{d,n}|}=d \cdot \sfrac{(d-1)}{2d(d^n-1)}, \\ & \pi_n:=\sfrac{|\cL|}{2|\EE_{d,n}|}=d^{n} \cdot \sfrac{(d-1)}{2d(d^n-1)} \in (1/4,1/2], 
\\ & \pi_j:=\sfrac{|\cL_{j}|(d+1)}{2|\EE_{d,n}|}=\sfrac{(d^{2}-1)d^{j}}{2d(d^n-1)} \quad \text{for } 1 \le j <n.
\end{split}
\end{equation}
Elementary considerations involving effective-resistance and network reductions (e.g.~\cite[Example 9.9]{levin2009markov}) show that for all $i \in ]n-1[$,
 \begin{equation}
\label{eq: SRWtree8} q_i:= \Pr_i^{]n[}[T_{0}<T_{n}]=\frac{d^{-i}-d^{-n}}{1-d^{-n}} \text{ and } \Pr_\mathbf{r}[ T_{\mathbf{r}}^+ < T_{\cL} ]=q_1= \frac{d^{-1}-d^{-n}}{1-d^{-n}}.
\end{equation}
\begin{lem} Let $o \in \cL $. In the above notation we have that
 \begin{equation}
 \label{eq: SRWtree3} \max_{v \in \VV_{d,n}}\mathbb{E}_v[T_{\mathbf{r}}]=\mathbb{E}_o[T_{\mathbf{r}}] = \frac{2d(d^n-1)}{(d-1)^2}-\frac{d+1}{d-1}n \pm O(1).
\end{equation}
\end{lem}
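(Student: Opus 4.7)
\medskip

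\noindent\textbf{Proof plan.} The first step is to reduce the computation to the birth--death chain $(Y_s)_{s \ge 0}$ on $]n[$ already introduced in the excerpt. Since $(|X_s|)_{s \ge 0}$ is distributed as $(Y_s)_{s \ge 0}$, for any $v$ with $|v|=k$ we have $\mathbb{E}_v[T_{\mathbf r}] = \mathbb{E}_k^{]n[}[T_0]$; in particular the quantity depends only on $|v|$. Writing $h_k := \mathbb{E}_k^{]n[}[T_0]$, one checks that $h_0 = 0$, $h_n = 1 + h_{n-1}$ (since state $n$ is reflecting for $Y$), and for $1 \le k \le n-1$
\begin{equation*}
h_k \;=\; 1 + \tfrac{1}{d+1} h_{k-1} + \tfrac{d}{d+1} h_{k+1}.
\end{equation*}
Proving that the maximum over $v$ is attained on the leaf set then reduces to checking that $k \mapsto h_k$ is increasing, which will fall out of the explicit solution below; in particular $\max_v \mathbb{E}_v[T_{\mathbf r}] = h_n$.

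The main computation is to solve this recurrence. Introducing the discrete derivative $g_k := h_{k+1} - h_k$ for $0 \le k \le n-1$ turns the recurrence for $h$ into the first-order affine recurrence
\begin{equation*}
d\, g_k \;=\; g_{k-1} - (d+1), \qquad 1 \le k \le n-1,
\end{equation*}
with boundary condition $g_{n-1} = 1$ coming from $h_n = 1 + h_{n-1}$. The unique fixed point is $g^{*} = -\tfrac{d+1}{d-1}$, so
$$
g_k \;=\; -\tfrac{d+1}{d-1} + \bigl(g_0 + \tfrac{d+1}{d-1}\bigr) d^{-k},
$$
and $g_{n-1}=1$ pins down $g_0 + \tfrac{d+1}{d-1} = \tfrac{2 d^{\,n}}{d-1}$, i.e.\ $g_k = -\tfrac{d+1}{d-1} + \tfrac{2 d^{\,n-k}}{d-1}$. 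In particular $g_k > 0$ for every $0 \le k \le n-1$, confirming the monotonicity claim above.

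Finally, summing the telescope $h_n = \sum_{k=0}^{n-1} g_k$ and using the geometric sum $\sum_{k=0}^{n-1} d^{-k} = \tfrac{d(1 - d^{-n})}{d-1}$ gives
\begin{equation*}
h_n \;=\; -\tfrac{(d+1)n}{d-1} + \tfrac{2 d^{\,n}}{d-1} \cdot \tfrac{d(1 - d^{-n})}{d-1} \;=\; \tfrac{2d(d^{\,n}-1)}{(d-1)^2} - \tfrac{d+1}{d-1}\, n,
\end{equation*}
which actually matches \eqref{eq: SRWtree3} with error $0$, comfortably within the claimed $O(1)$. There is no real obstacle here beyond bookkeeping; the only point where one must be careful is the boundary condition at the leaf (one step, not $(d+1)^{-1}$) which is what makes the $\tfrac{2d}{(d-1)^2} d^{\,n}$ leading term appear rather than the smaller $\tfrac{2}{(d-1)^2} d^{\,n}$ one would naively get from extrapolating the interior recurrence.
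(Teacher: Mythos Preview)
Your proof is correct. Both you and the paper reduce to the birth--death chain $(Y_s)$ and compute $h_n=\sum_{k=0}^{n-1}\mathbb{E}_{k+1}^{]n[}[T_k]$; the paper evaluates each crossing time via the standard stationary-measure formula $\mathbb{E}_{j+1}^{]n[}[T_j]=\pi(\{j+1,\ldots,n\})/(\pi(j)Q(j,j+1))$ (citing \cite{aldous}) and sums with $O(1)$ slack, whereas you solve the first-step recurrence directly. Your route is slightly more self-contained and yields the exact value rather than just ``$\pm O(1)$''; the paper's route has the advantage that it also records the Doob-transform facts \eqref{eq: SRWtree9}--\eqref{e:returnrr} needed later, but for the lemma itself the two arguments are essentially equivalent.
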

\emph{Proof:} Observe that if $v \in \cL_i $ then by symmetry $\mathbb{E}_v[T_{\mathbf{r}}]=\mathbb{E}_i^{]n[}[T_{0}] =\sum_{j=0}^{i-1} \mathbb{E}_{j+1}^{]n[}[T_{j}]$. Hence $\max_{v \in \VV_{d,n}}\mathbb{E}_v[T_{\mathbf{r}}]=\mathbb{E}_n^{]n[}[T_{0}] =\sum_{j=0}^{n-1} \mathbb{E}_{j+1}^{]n[}[T_{j}]$. Using the general formula \[\mathbb{E}_{j+1}^{]n[}[T_{j}]=\frac{\pi \{j+1,j+2,\ldots,n \} }{\pi(j)Q(j,j+1)}=\frac{1}{\pi(j)Q(j,j+1)}-\frac{\pi(]j[)}{\pi(j)Q(j,j+1)} \] for the expected crossing time of an edge in a birth and death chain (e.g.\ \cite[Proposition 5.3 part (b)]{aldous}) and the identities $\frac{1}{\pi(j)Q(j,j+1)}=\frac{d+1}{d\pi(j)}  =\frac{2(d^n-1)}{(d-1)d^{j}} $ for $j=0$ while $\frac{1}{\pi(0)Q(0,1)}=\sfrac{2(d^{n}-1)}{d-1} $ and  $|\frac{\pi(]j[)}{\pi(j)Q(j,j+1)}-\frac{d+1}{d-1}| \precsim d^{-(j+1)} $ for $j>0$ while  $\frac{\pi(]0[)}{\pi(0)Q(0,1)}=1$  (which follow from \eqref{e:piofY}) we have that \[ \sum_{j=0}^{n-1} \mathbb{E}_{j+1}^{]n[}[T_{j}]-O(1)=\sum_{j=0}^{n-1}\frac{2(d^n-1)}{(d-1)d^{j}}-\frac{d+1}{d-1}=  \frac{2d(d^n-1)}{(d-1)^2}-\frac{d+1}{d-1}n - O(1). \] 

 We give an alternative proof involving the Doob's transform corresponding to conditioning on $T_{\rr}^+<T_{\cL}^+$, as we shall later use the fact that this transform exhibits a bias towards the root (see \eqref{eq: SRWtree9} below for a precise statement).  By the Markov property and symmetry we have that $\mathbb{E}_\mathbf{r}[T_{\rr}^+ \mid T_{\mathbf{r}}^+ > T_{\cL} ]=\mathbb{E}_\mathbf{r}[T_{\cL} \mid T_{\mathbf{r}}^+ > T_{\cL} ]+\mathbb{E}_o[T_{\mathbf{r}}] $.  Thus 
 \[\frac{2(d^n-1)}{d-1}=2|\EE_{d,n}|/d=\mathbb{E}_\mathbf{r}[T_{\mathbf{r}}^+]=\mathbb{E}_\mathbf{r}[T_{\mathbf{r}}^+  \cdot 1_{ T_{\mathbf{r}}^+ < T_{\cL}} ]+\mathbb{E}_\mathbf{r}[T_{\mathbf{r}}^+  \cdot 1_{ T_{\mathbf{r}}^+ > T_{\cL}} ] \]
\[=\Pr_\mathbf{r}[ T_{\mathbf{r}}^+ < T_{\cL} ]\mathbb{E}_\mathbf{r}[T_{\mathbf{r}}^+ \mid T_{\mathbf{r}}^+ < T_{\cL} ]+\Pr_\mathbf{r}[ T_{\mathbf{r}}^+ > T_{\cL} ](\mathbb{E}_\mathbf{r}[T_{\cL} \mid T_{\mathbf{r}}^+ > T_{\cL} ]+\mathbb{E}_o[T_{\mathbf{r}}] ). \]
By \eqref{eq: SRWtree8} $\Pr_\mathbf{r}[ T_{\mathbf{r}}^+ < T_{\cL} ]=q_{1}=\frac{1}{d}-O(d^{-n}) $.  Thus to prove \eqref{eq: SRWtree3} it suffices to show that
\begin{equation}
\label{e:TLcon}
\mathbb{E}_\mathbf{r}[T_{\cL} \mid T_{\mathbf{r}}^+ > T_{\cL} ]=\E_0^{]n[}[T_n \mid T_0^+ > T_n ] = \frac{d+1}{d-1}n - O(1).
\end{equation}
Using the Doob's transform (e.g.~\cite[Section 17.6.1]{levin2009markov}) we get that conditioned on $T_{0}^+ < T_{n}^+$ the chain $(Y_i)_{i \in \Z_+} $ exhibits a bias towards $0$. More precisely, for every $j \in ]n[$, $ i \in [n-1]$ and $s \ge1$ we have that
 \begin{equation}
\label{eq: SRWtree9}
\begin{split} & \Pr_{j}^{]n[}[Y_{s+1}=i-1 \mid Y_{s}=i, s \le  T_{0}^+ < T_{n}^+]  \\ & =\frac{q_{i-1}}{(d+1)q_i}= \frac{d(1-d^{i-n-1})}{(d+1)(1-d^{i-n})}  \ge \frac{d}{d+1}.
\end{split}
 \end{equation}
Thus conditioned on $T_{0}^+ < T_{n}^+ $ we have that $Y_{\min \{i,T_0^{+} \}}+\frac{d-1}{d+1} \min \{i,T_0^{+} \} $ is a super-martingale.   By optional stopping  $\mathbb{E}_{1}^{]n[}[T_{0}^+ \mid T_{0}^+ < T_{n} ] \le \frac{d+1}{d-1} $  and so
\begin{equation}
\label{e:returnrr}
\mathbb{E}_\mathbf{r}[T_{\mathbf{r}}^+ \mid T_{\mathbf{r}}^+ < T_{\cL} ] = \mathbb{E}_{0}^{]n[}[T_{0}^+ \mid T_{0}^+ < T_{n} ]   \le 1+\sfrac{d+1}{d-1}=\sfrac{2d}{d-1} .
\end{equation}
Using the aforementioned formula for the expected crossing time of an edge in a birth and death chain,
 \[ \mathbb{E}_\mathbf{r}[T_{\cL}  ]= \E_0^{]n[}[T_n]=n+ \sum_{i=1}^{n}\frac{1-d^{-(k-1)}}{d-1} =\frac{d+1}{d-1}n-O(1). \]
  Since the number of visits to $\rr$ prior to $T_{\cL}$ (starting from $\rr$) follows a Geometric distribution with mean $1/\Pr_{\rr}[T_{\mathbf{r}}^+ > T_{\cL} ]$ using Wald's identity we get that 
\[  \mathbb{E}_\mathbf{r}[T_{\cL}  ] - \mathbb{E}_\mathbf{r}[T_{\cL} \mid T_{\mathbf{r}}^+ > T_{\cL} ] = \frac{\mathbb{E}_\mathbf{r}[T_{\mathbf{r}}^+ \mid T_{\mathbf{r}}^+ < T_{\cL} ]\Pr_{\rr}[T_{\mathbf{r}}^+ < T_{\cL} ] }{\Pr_{\rr}[T_{\mathbf{r}}^+ > T_{\cL} ]}=O(1/d) .\]
Combining the last two equations yields \eqref{e:TLcon}. \qed

\medskip

The following lemma asserts that the law of the hitting time of the root $\rr$ starting from the leaf set is in some sense similar to the Geometric distribution of mean $\asymp d^{n-1}$. Equations \eqref{eq: SRWtree5'up} and \eqref{eq: SRWtree5'} will not be used in what come. We prove them as we think they are interesting in their own right. 
\begin{lem}
\label{lem: SRWontree}
 For every $v \in \VV_{d,n}$ and $i \in \N$, 
 \begin{equation}
 \label{eq: SRWtree4}
  \Pr_v[T_{\mathbf{r}} \ge  16id^{n-1}] \le 2^{-i} .
\end{equation}
 Conversely, there exist some $c,c'>0$ such that
  \begin{equation}
 \label{eq: SRWtree5}
\forall \, 16n \le t \le d^{n-1}, \quad  ctd^{-(n-1)} \le \min_{v \in \VV_{d,n} } \Pr_v[T_{\mathbf{r}} \le t] \le td^{-(n-1)} .
\end{equation}
  \begin{equation}
 \label{eq: SRWtree5''}
\forall\, t \ge 16 n, \quad \exp(-ctd^{-(n-1)}) \le \min_{v \in \VV_{d,n} } \Pr_v[T_{\mathbf{r}} \le t] \le \exp(-c'td^{-(n-1)}) .
\end{equation}
Moreover, there exists some $C_1>0$ such that for all $v \in \cL$
  \begin{equation}
 \label{eq: SRWtree5'up}
\max_t \Pr_v[T_{\mathbf{r}} = t] \le C_{1}d^{-(n-1)} ,
\end{equation}
while for all  $C_2>0$ there exists some $c_1>0$, depending only on $C_2$, such that for every $v \in \cL$ and all $t$ such that $16n \le t \le C_2 d^{n-1}$ and $t-n $ is even we have that
  \begin{equation}
 \label{eq: SRWtree5'}
 \quad  \Pr_v[T_{\mathbf{r}} = t] \ge c_{1}d^{-(n-1)} .
\end{equation}
\end{lem}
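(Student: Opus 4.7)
I would reduce everything to the projected birth--death chain $(Y_s := |X_s|)_{s \ge 0}$ on $]n[$, since $T_\rr = \inf\{s : Y_s = 0\}$ and, by the symmetry of $\cT_{d,n}$, the law of $T_\rr$ under $\Pr_v$ depends only on $|v|$. A standard monotone coupling of $Y$ (valid because $Y$ is a birth--death chain, reflected at $n$) shows that $T_0$ is stochastically largest when $Y_0 = n$, so every $\min_{v \in \VV_{d,n}}$ appearing in the lemma is attained on $\cL$, which is the case I focus on.

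For \eqref{eq: SRWtree4}, equation \eqref{eq: SRWtree3} gives $\max_v \mathbb{E}_v[T_\rr] \le 8 d^{n-1}$ for all $d \ge 2$, so Markov's inequality yields $\Pr_v[T_\rr \ge 16 d^{n-1}] \le 1/2$ uniformly in $v$, and iterating via the strong Markov property at times $16 j d^{n-1}$ gives the geometric tail. For the upper bound in \eqref{eq: SRWtree5} I use $\{T_\rr = s\} \subseteq \{X_s = \rr\}$ to write
\begin{equation*}
\Pr_v[T_\rr \le t] \le \sum_{s=1}^{t} p^s(v, \rr),
\end{equation*}
and reversibility together with \eqref{e:piofY} gives $p^s(v, \rr) = (\pi(\rr)/\pi(v)) p^s(\rr, v) \le \pi(\rr)/\pi(v) = d^{-(n-1)}$ for $v \in \cL$. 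For the matching lower bound I decompose according to the first hit of $\rr$:
\begin{equation*}
\sum_{s=1}^{t} p^s(v, \rr) = \sum_{j=1}^{t} \Pr_v[T_\rr = j] \sum_{k=0}^{t-j} p^k(\rr, \rr) \le \Pr_v[T_\rr \le t] \cdot G_\rr(t),
\end{equation*}
where $G_\rr(t) := \sum_{k=0}^{t} p^k(\rr, \rr)$. One bounds $G_\rr(t) = O(1)$ for $t \le d^{n-1}$ via the excursion structure at $\rr$: by \eqref{e:returnrr} and \eqref{eq: SRWtree8}, excursions from $\rr$ that do not reach $\cL$ have $O(1)$ expected length, while long excursions (through $\cL$) take $\Theta(d^{n-1})$, so the expected number of returns to $\rr$ by time $d^{n-1}$ is $O(1)$. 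A matching heat-kernel lower bound $p^s(v, \rr) \ge c' d^{-(n-1)}$ for a positive density of $s$ in $[8n, t]$ of the correct parity (obtained from near-stationarity $p^s(v, \rr) \asymp \pi(\rr)$ plus elementary parity considerations on $Y$) then gives $\Pr_v[T_\rr \le t] \ge c t d^{-(n-1)}$.

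Reading \eqref{eq: SRWtree5''} as the natural statement $\exp(-c t d^{-(n-1)}) \le \max_v \Pr_v[T_\rr > t] \le \exp(-c' t d^{-(n-1)})$ (the displayed form appears to contain a typo), the upper bound follows by iterating \eqref{eq: SRWtree4} with $i = \lfloor t/(16 d^{n-1}) \rfloor$. The lower bound combines the upper bound in \eqref{eq: SRWtree5}, which gives $\Pr_\cL[T_\rr > d^{n-1}/2] \ge 1/2$, with a regeneration argument: conditional on $\{T_\rr > s\}$, with probability bounded below the walker visits $\cL$ before $\rr$ by time $s + O(n)$ (using the bias quantified by \eqref{eq: SRWtree9}), after which the leaf estimate can be reapplied; concatenating yields the exponential lower bound. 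Equation \eqref{eq: SRWtree5'up} is immediate from $\Pr_v[T_\rr = t] \le p^t(v, \rr) \le d^{-(n-1)}$ established above.

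The main obstacle is \eqref{eq: SRWtree5'}. Combining the two halves of \eqref{eq: SRWtree5} at $t$ and at $\alpha t$ with $\alpha < c$ yields
\begin{equation*}
\Pr_v[T_\rr \in (\alpha t, t]] \ge (c - \alpha) t d^{-(n-1)},
\end{equation*}
so the \emph{average} of $\Pr_v[T_\rr = s]$ over $s \in (\alpha t, t]$ of the given parity is $\asymp d^{-(n-1)}$. Turning this average into a pointwise bound requires regularity of the density in $s$, which I plan to obtain spectrally: the chain killed at $\rr$ has principal eigenvalue $1 - \Theta(d^{-(n-1)})$ (consistent with $\mathbb{E}_\cL[T_\rr] \asymp d^{n-1}$), so that $\Pr_v[T_\rr = s+2]/\Pr_v[T_\rr = s]$ is bounded above and below by constants depending only on $C_2$ for $s \le C_2 d^{n-1}$; combined with the average bound this gives the pointwise lower bound $c_1 d^{-(n-1)}$. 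Producing a clean, $d$-uniform spectral estimate for the killed chain is the central technical step; an alternative route is to condition on the last visit to $\cL$ before $T_\rr$ and invoke \eqref{eq: SRWtree5'up} on the final leaf-to-root excursion, but the parity and renewal bookkeeping is more delicate.
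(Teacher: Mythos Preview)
Your argument for \eqref{eq: SRWtree4} matches the paper's, and your bound $\Pr_v[T_\rr=t]\le p^t(v,\rr)=Q^t(n,0)\le \pi_0/\pi_n = d^{-(n-1)}$ for \eqref{eq: SRWtree5'up} is correct (interpreting $\pi$ as the stationary measure \eqref{e:piofY} of the level chain) and is in fact shorter than the paper's route there.

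For \eqref{eq: SRWtree5} the paper takes a different path: it decomposes the walk started at a leaf into excursions from $n$ back to $n$ in the level chain, with $U\sim\mathrm{Geom}(q_{n-1})$ counting the excursions prior to $T_0$. The upper bound is $\Pr_o[T_\rr\le t]\le\Pr[U\le\lfloor (t-n)/2\rfloor]\le t\,q_{n-1}$, and the lower bound follows since each unsuccessful excursion has $O(1)$ expected length (cf.\ \eqref{e:returnrr}) while the final descent takes at most $4n$ with high probability. Your Green's-function route is a legitimate alternative, but the stated justification for the heat-kernel lower bound is wrong: the level chain started from $n$ is \emph{not} near stationarity at times $s\le d^{n-1}$ (the mixing time is $\asymp d^{n-1}\log d$, Corollary~\ref{c:mixing}), so one cannot write $Q^s(n,0)\asymp\pi_0$ directly. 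What does work is one more use of reversibility: $Q^s(n,0)=(\pi_0/\pi_n)Q^s(0,n)$, and the chain started from $0$ has drift $(d-1)/(d+1)$ towards $n$, reaches $n$ in $O(n)$ steps with probability bounded below, after which $Q^{2j}(n,n)\ge 2\pi_n\ge 1/2$ by monotonicity of diagonal return probabilities. This repairs your argument.

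The genuine gap is in \eqref{eq: SRWtree5'}. Knowing that the principal eigenvalue of the killed chain is $1-\Theta(d^{-(n-1)})$ does \emph{not} by itself control the ratio $\Pr_v[T_\rr=s+2]/\Pr_v[T_\rr=s]$: in the spectral expansion of $\Pr_v[T_\rr>s]$ the coefficients attached to subleading eigenvalues can carry either sign, and near-cancellation can destroy any uniform ratio bound. The paper's key device, which you are missing, is the classical Karlin--McGregor/Fill representation (recorded as Proposition~\ref{p:log-con}): for a birth--death chain with no holding, the law of $T_0/2$ starting from the far endpoint is a \emph{convolution of Geometric distributions} whose parameters are the eigenvalues of the restricted Laplacian, hence is log-concave and unimodal. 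Unimodality turns your averaged bound into a pointwise one for free: below the mode $m_*$ one has $\Pr_o[T_\rr=t]\ge\frac{2}{t}\Pr_o[T_\rr\le t]$, and above the mode $\Pr_o[T_\rr=t]\ge\frac{1}{i}\Pr_o[t\le T_\rr<t+2i]$ for any $i$, which together with \eqref{eq: SRWtree5''} gives \eqref{eq: SRWtree5'}. This is exactly the regularity you were seeking, obtained structurally rather than through a spectral-perturbative argument. (The same representation also yields the paper's proof of \eqref{eq: SRWtree5'up}, via ``the mode of a convolution has probability at most the smallest parameter $\gamma_1\asymp d^{-(n-1)}$'', though as noted your direct argument is cleaner.)
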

\begin{proof}
We first prove \eqref{eq: SRWtree4}. By \eqref{eq: SRWtree3} $\max_{v \in \VV_{d,n}}\mathbb{E}_v[T_{\mathbf{r}}] \le 8d^{n-1} $. Hence by Markov's inequality $\max_{v \in \VV_{d,n}}\Pr_v[T_{\mathbf{r}} \ge16d^{n-1}] \le 1/2 $. Averaging over $(X_{16jd^{n-1}})_{j=1}^i $ and applying the Markov property $i$ times  yields \[\max_{v \in \VV_{d,n}}\Pr_v[T_{\mathbf{r}} \ge16id^{n-1}] \le \left( \max_{v \in \VV_{d,n}}\Pr_v[T_{\mathbf{r}} \ge16d^{n-1}] \right)^i \le 2^{-i}. \] 
We now prove \eqref{eq: SRWtree5}.
First, observe that it suffices to consider the case that $v=o \in \cL $ (by symmetry, any leaf achieves the minimum). Consider the aforementioned birth and death chain $(Y_s)_{s=0}^{\infty}$ started from state $n$. The excursions from $n$ to itself can be partitioned into ones during which the chain does not visit 0 and ones in which it does. Let $\xi_i$ be the length of the $i$th excursion of the former type and let \[ Z:=T_0-\sup \{s<T_0:Y_s=n \} \] be the length of the journey from $n$ to 0 in the first excursion of the latter type. Denote the number of excursions of the former type prior to $T_0$ by \[U:=|\{0<s<T_0 :Y_s=n \}|.\] By \eqref{eq: SRWtree9}, \[\Pr_n^{]n[}[Z>4n] \le \Pr_0^{]n[}[T_{n}>4n]  \le e^{-c_3n}.\] Let $t \ge 16n$ and  $\ell:= \lceil t/8 \rceil$. Let $S_{\ell}:=\sum_{i=1}^{\ell} \xi_i  $. Using the fact that for all $i$ we have that $\E_n^{]n[}[\xi_i] \le \frac{2d}{d-1} \le 4$ (cf.\ \eqref{e:returnrr}) and  $\E_{n}^{]n[}[e^{\alpha \xi_i}]<\infty $ for some $\alpha>0$ we get that \[\Pr_{n}^{]n[}\left[S_{\ell}> t-4n \right] \le \Pr_{n}^{]n[}\left[S_{\ell} \ge 6 \ell  \right] \le e^{-c_4 \ell} \le e^{-c_4t/8}. \] By \eqref{eq: SRWtree8},   Bonferroni inequality and the fact that $\ell= \lceil t/8 \rceil \le d^{n-1}/8$ we have that \[\Pr_{n}^{]n[}[U \le \ell] \ge \ell q_{n-1}-\binom{\ell}{2}q_{n-1}^2 \ge 0.5 \ell q_{n-1} \ge c_5 t d^{-(n-1)}.\] Finally,
\[\Pr_o[T_{\mathbf{r}} \le t] \ge \Pr_n^{]n[} \left[U \le \ell,\, S_{\ell}  \le t-4n,\, Z \le 4n \right] \] \[ =\Pr_n^{]n[}[U \le \ell]\Pr_n^{]n[}\left[S_{\ell}\le t-4n \right]\Pr_n^{]n[}[Z \le 4n] \ge ctd^{-(n-1)}.     \]
Conversely, as $\xi_i \ge 2$ for all $i$, if we write $m:=\left\lfloor \frac{t-n}{2} \right\rfloor$ we have that \[\Pr_o[T_{\mathbf{r}} \le t] \le \Pr_n^{]n[} \left[U \le m  \right] \le mq_{n-1} \le td^{-(n-1)}.\] The proof of \eqref{eq: SRWtree5''} is similar to that of  \eqref{eq: SRWtree5}, using the fact that the law of $U $ under $\Pr_n^{]n[} $ is Geometric with parameter $q_{n-1}$. It  is left as an exercise.

We now prove \eqref{eq: SRWtree5'up}. Let  $v \in \cL$. The law of $T_{\rr}$ starting from $v$ is the same as the law of $T_0$ starting from $n$ in the birth and death chain $(Y_i)_{i=0}^{\infty}$, where $Y_i:=|X_i|$.   We treat the case that $n$ is even. The case that $n$ is odd is analogous and is left as an exercise.

By Proposition \ref{p:log-con}  the law of $T_0/2 $ is a convolution of Geometric distributions, one of which is of parameter $\gamma \precsim d^{-(n-1)}$. Since the probability of the mode of a convolution is at most the probability of the mode of each term in the convolution, it follows that the mode of $T_0$ has probability at most $\gamma \precsim d^{-(n-1)}$.

We now show that for all $C_2 \ge 1 $ there exists some $c_1>0$ (depending only on $C_2$)  such that for all $16n \le t \le C_2 d^{n-1} $ such that $t-n$ is even we have that $\Pr_v[T_{\rr}=t] \ge c_1 d^{-(n-1)} $. Denote  the mode of $T_{\rr}$ by $m_*$. Let $t \equiv n$ mod 2. By Proposition \ref{p:log-con}   $\Pr_o[T_{\mathbf{r}} = t] $ is non-decreasing in $t$  for   $t \le m_*$ (such that  $t \equiv n$ mod 2) and non-increasing for $t \ge m_*$ (such that  $t \equiv n$ mod 2). Thus for $ t \ge 16 n$ such that  $t \equiv n$ mod 2 if    $t \le \min \{ m_*,C_{2}d^{n-1} \}$ then by \eqref{eq: SRWtree5''}  \[\Pr_o[T_{\mathbf{r}} = t] \ge \sfrac{2}{t}\Pr_o[T_{\mathbf{r}} \le t] \ge  \sfrac{2}{t}(\hat c_1t/d^{n-1}   ) =2 \hat c_1 d^{-(n-1)}       ,\] where $\hat c_1 $ may depend on $C_2$ (in practice $m_* = O(d^{n-1}) $ and so $\hat c_1$ need not depend on $C_2$), while if $t > m_*$ then for all $i \in \N$ 
\[ \Pr_o[T_{\mathbf{r}} = t] \ge \sfrac{1}{i}  \Pr_o[t \le T_{\mathbf{r}} < t+2i]    .\]
Substituting $i=\lceil C_3 d^{n-1} \rceil $ for some ($C_3$ sufficiently large in terms of $C_2$) and applying \eqref{eq: SRWtree5''} concludes the proof.
\end{proof}

Before obtaining estimates on the transition probabilities we need two additional lemmas.

\begin{defn}
\label{d:M}
Consider SRW on $\cT_{d,n}$. Let $\tau_i:=T_{\cL_{n-i}}$. Let  \[M_t:= \max \{i: \tau_i \le t \}=\max\{\|X_i \|:i \le t \}\]
  be the maximal distance of the walk from $\cL$ by time $t$.
\end{defn}
The following simple lemma describes the way in which we shall derive heat kernel estimates for SRW on $\cT_{d,n}$. 
\begin{lem}
\label{lem:auxcalculationtreeSRW}
Consider SRW on $\cT_{d,n}$. For every $t \in \N $ and $x,y \in \VV_{d,n}$,
\begin{equation}
\label{eq: SRWtree1}
p^t(y,x) = \sum_{i \ge \|x \wedge y \|}\Pr_y[ |X_t|=|x| , M_t=i]d^{-(i-\|x\|)}.
 \end{equation}
 \begin{equation}
\label{eq: P2txx}
p^{2t}(x,x)  \le \sum_{j \ge \|x\| }\Pr_x[| X_{2t} |=|x|, M_{t}=j]d^{\|x\|-j}. \end{equation}
\end{lem}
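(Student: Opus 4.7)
Both identities follow from the strong Markov property combined with the symmetry of any subtree $\cT_u\subseteq\cT_{d,n}$ about its root $u$. The key principle is: the law $\Pr_u$ of SRW from $u$ is invariant under the action of $\mathrm{Aut}(\cT_{d,n})$ stabilising $u$, and this action is transitive on the $d^{|w|-|u|}$ level-$|w|$ descendants of $u$. Hence for any event $A$ determined by the level process $(\|X_s\|)_{s\le m}$ (which is automatically invariant under this action) and any descendant $w$ of $u$ at level $|w|$,
\begin{equation*}
\Pr_u[A,\,X_m=w]\;=\;\frac{\Pr_u[A,\,X_m\in\cL_{|w|}(\cT_u)]}{d^{|w|-|u|}}\;\le\;\frac{\Pr_u[A,\,|X_m|=|w|]}{d^{|w|-|u|}},
\end{equation*}
with equality precisely when $A$ forces $X_m\in\cT_u$.

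To prove \eqref{eq: SRWtree1}, observe that $X_t=x$ forces the walk from $y$ to pass through a common ancestor of $x$ and $y$, so $M_t\ge\|x\wedge y\|$. For each $i\ge\|x\wedge y\|$, let $u$ be the ancestor of $y$ at height $i$; since $i\ge\|x\wedge y\|$, $u$ is also an ancestor of $x$, and $x\in\cL_{|x|}(\cT_u)$. Set $\tau:=\inf\{s:\|X_s\|=i\}$; on $\{M_t=i\}$ one has $\tau\le t$ and $X_\tau=u$ (the path in $\cT_{d,n}$ from $y$ to any height-$i$ vertex goes through $u$). Conditionally on $\tau=\tau_0$, the strong Markov property produces a fresh SRW from $u$, and $\{M_t=i\}\cap\{\tau=\tau_0\}$ becomes the event that this new walk stays in $\cT_u$ throughout its first $t-\tau_0$ steps---a level-process event that forces $X_t\in\cT_u$. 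The principle above then applies with equality, giving
\begin{equation*}
\Pr_y[X_t=x,\,M_t=i,\,\tau=\tau_0]\;=\;d^{-(i-\|x\|)}\,\Pr_y[|X_t|=|x|,\,M_t=i,\,\tau=\tau_0].
\end{equation*}
Averaging over $\tau_0$ and summing over $i\ge\|x\wedge y\|$ yields \eqref{eq: SRWtree1}.

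To prove \eqref{eq: P2txx}, set $y=x$ and $j=M_t\ge\|x\|$; let $u$ be the ancestor of $x$ at height $j$ (with $u=x$ when $j=\|x\|$), and $\tau:=\inf\{s:\|X_s\|=j\}\le t$, so $X_\tau=u$. The identical Markov analysis shows that, conditionally on $\tau=\tau_0$, the event $\{M_t=j\}\cap\{\tau=\tau_0\}$ forces the post-$\tau$ walk from $u$ to stay in $\cT_u$ only during its first $t-\tau_0$ post-$\tau$ steps; during the remaining $t$ steps (up through the time-$2t$ endpoint) it may escape $\cT_u$, so $X_{2t}$ need not be a descendant of $u$. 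Hence the principle above yields only an inequality,
\begin{equation*}
\Pr_x[X_{2t}=x,\,M_t=j,\,\tau=\tau_0]\;\le\;d^{\|x\|-j}\,\Pr_x[|X_{2t}|=|x|,\,M_t=j,\,\tau=\tau_0],
\end{equation*}
and summing over $\tau_0$ and over $j\ge\|x\|$ yields \eqref{eq: P2txx}.

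The only delicate point is the symmetry principle itself: one must carefully identify the relevant conditioning as a level-process event and invoke the transitivity of $\mathrm{Aut}(\cT_{d,n})_u$ on the level-$|x|$ descendants of $u$. The reason the first identity is exact while the second is only an upper bound is exactly that in \eqref{eq: P2txx} the walk has an additional $t$ time units in which it can escape $\cT_u$, whereas in \eqref{eq: SRWtree1} the endpoint is reached at time $t$ and the event $\{M_t=i\}$ pins it inside $\cT_u$.
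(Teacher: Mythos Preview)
Your proof is correct and follows essentially the same approach as the paper's: both arguments hinge on the observation that once the walk reaches the height-$i$ ancestor $z_i$ of $y$, conditional on $M_t=i$ and $|X_t|=|x|$ the endpoint $X_t$ is uniformly distributed over the $d^{i-\|x\|}$ level-$|x|$ vertices of $\cT_{z_i}$, by the symmetry of $\cT_{d,n}$ about $z_i$. You have made this more explicit than the paper does, by invoking the strong Markov property at the stopping time $\tau$ and identifying the relevant orbit under $\mathrm{Aut}(\cT_{d,n})_u$; the paper simply says ``by symmetry''. For \eqref{eq: P2txx} there is a slight cosmetic difference: the paper further decomposes according to $M_{2t}=\ell\ge j$ to obtain an exact conditional identity $\Pr_x[X_{2t}=x\mid |X_{2t}|=|x|,M_t=j,M_{2t}=\ell]=d^{\|x\|-\ell}$ before bounding $\ell\ge j$, whereas you bound directly via the inclusion $\cL_{|x|}(\cT_u)\subseteq\cL_{|x|}$. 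Both routes arrive at the same inequality for the same reason.
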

\emph{Proof:}
Consider the walk started from $y$. Recall that $\overleftarrow{y_\ell}$ is the $\ell$th ancestor of $y$.  Let $i \ge \|y\|$. Denote $z_i:=\overleftarrow{{y_{i-\|y\| }}} \in \cL_{n-i}$. If $M_t=i \ge \|y\| $, then  $T_{z_i} \le t$ and $\{X_s:s \le t \} \subseteq \cT_{z_i}$. If in addition  $X_t \in \cL_{n-j} $ for some $j \le  i$, then by symmetry $X_t$ is equally likely to be at each of the $d^{i-j} $ vertices of $\cL_{i-j}(\cT_{z_i}) $.     This clearly implies \eqref{eq: SRWtree1}. For \eqref{eq: P2txx} observe that similar reasoning yields that for  $j \ge \|x\| $, \begin{equation*} \begin{split} & \Pr_x[ X_{2t}=x \mid | X_{2t} |=|x|, M_{t}=j] \\ & = \sum_{\ell:\, \ell \ge j}d^{\|x\|-\ell}\Pr_x[ M_{2t}=\ell \mid | X_{2t} |=|x|, M_{t}=j] \le d^{\|x\|-j}. \quad \text{\qed}  \end{split} \end{equation*}
Recall that $\pi$ is the stationary distribution of $(Y_s)_{s=0}^{\infty}$, where $Y_s:=|X_s|$. 
\begin{lem}
\label{lem:auxcalculationtreeSRW2}
Consider SRW on $\cT_{d,n}$.  For all $0 \le i \le j \le n$ and $t \ge j-i $ we have that   
\begin{equation}
\label{eq: SRWtree2}
\forall y \in \cL_i, \quad \Pr_{y}[X_{t} \in \cL_{j} ]  \le  \Pr_{y}[T_{\cL}>t]+ 4 \pi_{j}  \Pr_{y}[T_{\cL} \le t]. 
\end{equation}
Let $f(t):=t^{-3/2}[2d/(d+1)^2]^t$.  Let $y \in \VV_{d,n}$. Then
\begin{equation}
\label{eq: SRWtree0}
\max_{u} P^{2t}(y,u)/\deg(u) \le Cf(t)+d^{-\|y\|}.  
\end{equation} 
\end{lem}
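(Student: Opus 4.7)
Both bounds reduce to analyzing the birth–death projection $(Y_s)_{s \ge 0}$ defined by $Y_s := |X_s|$; the first uses reversibility of $(Y_s)$ together with the strong Markov property at $T_\cL$, while the second combines the representation \eqref{eq: SRWtree1} with geometric tail bounds for $M_{2t}$.

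For \eqref{eq: SRWtree2}, first note that $\Pr_y[X_t \in \cL_j] = \Pr_y[Y_t = j]$ since the event depends only on the level. Decomposing on the event $\{T_\cL \le t\}$ (with $T_\cL = T_n$ in the birth–death chain) and applying the strong Markov property at $T_\cL$ gives
\begin{equation*}
\Pr_y[Y_t = j] \le \Pr_y[T_\cL > t] + \Pr_y[T_\cL \le t] \cdot \max_{s \ge 0} \Pr_n^{]n[}[Y_s = j].
\end{equation*}
The maximum is controlled by reversibility of $(Y_s)$: $\pi_n \Pr_n^{]n[}[Y_s = j] = \pi_j \Pr_j^{]n[}[Y_s = n] \le \pi_j$, hence $\Pr_n^{]n[}[Y_s = j] \le \pi_j/\pi_n \le 4\pi_j$ using the lower bound $\pi_n > 1/4$ from \eqref{e:piofY}.

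For \eqref{eq: SRWtree0}, I would apply the identity \eqref{eq: SRWtree1} and split on whether $u$ is a leaf. If $u \in \cL$ then $\|u\| = 0$ and $\deg(u) = 1$, and since every index $i$ in the sum satisfies $i \ge \|y \wedge u\| \ge \|y\|$ we obtain
\begin{equation*}
P^{2t}(y,u)/\deg(u) = \sum_i \Pr_y[|X_{2t}| = n,\, M_{2t} = i]\, d^{-i} \le d^{-\|y\|}\Pr_y[|X_{2t}|=n] \le d^{-\|y\|},
\end{equation*}
which is covered by the second term of the desired bound. If $u$ is not a leaf, then $\deg(u) \ge d$, and I would split the sum at a threshold $i_* = \|y\| + O(\log t)$. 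For $i \le i_*$ the walk is confined to a subtree of depth at most $i_*$ containing $y$, and $\Pr_y[|X_{2t}| = |u|, M_{2t} = i]$ is estimated via the combinatorial formula for the return probability on the infinite $d$-ary tree, namely $\binom{2t}{t}(t+1)^{-1}[d/(d+1)^2]^t$ (from counting Dyck excursions weighted by $d$ choices per up-step), combined with Stirling's approximation. For $i > i_*$ the downward bias of the height process $\|X_s\|$ (up probability $1/(d+1)$, down probability $d/(d+1)$) yields the geometric tail $\Pr_y[M_{2t} \ge \|y\|+j] \precsim d^{-j}$, which combines with the weight $d^{\|u\|-i}$ in \eqref{eq: SRWtree1} to produce a geometric-summable contribution of order $d^{-\|y\|}$.

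The main obstacle is recovering the precise exponential rate in $f(t)$ in the low-$i$ regime: one must combine the Catalan-type enumeration of closed tree walks with the factor $d^{\|u\|-i}$ weighting different height excursions in such a way that the product is bounded by $C[2d/(d+1)^2]^t/t^{3/2}$ for an absolute constant $C$. This requires a delicate balance between the geometric gain from $d^{\|u\|-i}$ (which penalizes high excursions away from $y$) and the Catalan factor $\binom{2t}{t}/(t+1) \asymp 4^t/t^{3/2}$, and matching the base $2d/(d+1)^2$ of the exponential rather than the naive $4d/(d+1)^2$ is the key technical point.
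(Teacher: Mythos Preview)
Your argument for \eqref{eq: SRWtree2} is essentially the paper's: decompose on $\{T_\cL \le t\}$, apply the strong Markov property, and bound $\Pr_n^{]n[}[Y_s=j] \le \pi_j/\pi_n \le 4\pi_j$ (the paper phrases this as non-increase of the $L_\infty$ distance from $\pi$, which at time $0$ equals $1/\pi_n$, but that is exactly your reversibility computation).

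For \eqref{eq: SRWtree0} your route through \eqref{eq: SRWtree1} has a genuine gap. The tail estimate you invoke, $\Pr_y[M_{2t}\ge \|y\|+j]\precsim d^{-j}$, is false in general: the height process $\|X_s\|$ reflects at the leaves, so over a time window of length $2t$ the walk makes order $t/d^{\|y\|}$ independent attempts to climb, and the correct bound is of order $t\,d^{-(\|y\|+j)}$ (this is exactly \eqref{eq: SRWtree5}). The bound $d^{-j}$ would hold only conditionally on $T_\cL>2t$, which you have not imposed. With the correct $t$-dependent tail, the ``high-$i$'' contribution no longer sums to $d^{-\|y\|}$ uniformly in $u$, and your threshold argument breaks.

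The paper avoids this by splitting not on the value of $M_{2t}$ but on the event $\{T_\cL\le 2t\}$. On $\{T_\cL>2t\}$ the walk never touches the leaves and can be coupled with SRW on the infinite $d$-ary tree $\TT$, whose on-diagonal decay gives the $Cf(t)$ term directly (this is \eqref{e:P2tuv2}). On $\{T_\cL\le 2t\}$, one conditions on $M_{T_\cL}=j\ge\|y\|$: then $X_{T_\cL}$ is uniform on the $d^j\ge d^{\|y\|}$ leaves of the corresponding induced subtree, and the $L_\infty$ density of this uniform distribution relative to degree is exactly $d^{-j}\le d^{-\|y\|}$; since the $L_\infty$ distance is non-increasing under the semigroup, the same bound persists at time $2t$. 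This two-line argument replaces both your low-$i$ Catalan computation and your high-$i$ tail sum, and makes the puzzle you flag about the exponential base disappear (it is inherited wholesale from the infinite-tree heat kernel).
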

\begin{proof}
We first prove \eqref{eq: SRWtree2}. We may assume  $t-(j-i)$ is even, as otherwise $\Pr_{y}[X_{t} \in \cL_{j} ]=0$.   Since the $L_{\infty}$ distance\footnote{The $L_{\infty}$ distance of a distribution $\mu$ from $\pi$ is $\|\mu - \pi \|_{\infty,\pi}:= \max_{x}|\frac{\mu(x)}{\pi(x)}-1 |$. If $\Pr_{\mu}^t$ denotes the law of the chain at time $t$  when the initial distribution is $\mu$, then $\|\Pr_{\mu}^t - \pi \|_{\infty,\pi} $ is non-increasing.} from $\pi$ is non-increasing in time  \[\max_{0 \le \ell \le n}\sup_{s \ge 0}\Pr[|X_s|=\ell \mid |X_0|=n]/\pi_{\ell} =\max_{0 \le \ell \le n}\sup_{s \ge 0}\Pr[Y_s=\ell \mid Y_0=n]/\pi_{\ell} = 1/\pi_n \le 4. \]
Averaging over $T_{\cL}$ and applying the Markov property, we get that
\begin{equation*}
\begin{split}
& \Pr_{y}[X_{t} \in \cL_{j} ]-\Pr_{y}[T_{\cL}>t] \le\Pr_{y}[T_{\cL} \le t,X_{t} \in \cL_{j} ] 
\\ &  \le \Pr_{y}[T_{\cL} \le t] \sup_{s \ge 0}\Pr[|X_s|=j \mid|X_0|=n ]
\\ &  \le \Pr_{y}[T_{\cL} \le t]\pi_{j}[\max_{0 \le \ell \le n}\sup_{s \ge 0}\Pr[|X_s|=\ell \mid |X_0|=n ]/\pi_{\ell} ] \le  4\pi_j \Pr_{y}[T_{\cL} \le t] .
\end{split}
\end{equation*} 
We now prove \eqref{eq: SRWtree0}. Clearly,
\begin{equation}
\label{e:P2tuv}
P^{2t}(y,u)=\Pr_y[X_{2t}=u,T_{\cL}>2t]+\Pr_y[X_{2t}=u,T_{\cL} \le 2t].   \end{equation}
   By \eqref{e:P2tuv2} we have that $\Pr_y[X_{2t}=u,T_{\cL}>2t] \le C f(t) $. We now bound $\Pr_y[X_{2t}=u,T_{\cL} \le 2t]$.

Given $T_{\cL} \le 2t $ and that $M_{T_{\cL} }=j \ge \|y\|$ (where $M_{\bullet}$ is as in Definition \ref{d:M}), we have that $X_{T_{\cL}} $ has the uniform distribution over $\cL(\cT_{\overleftarrow{y_{j-\|y\|}}})$ and is independent of $T_{\cL}$ (conditioned on $M_{T_{\cL} }=j$).   Using the fact that the $L_{\infty}$ distance from the stationary is non-increasing in time, by averaging over $T_{\cL}$ and  $M_{T_{\cL} }$ and applying the triangle inequality we get that \[\max_{u \in V} \Pr_y[X_{2t}=u,T_{\cL} \le 2t]/\deg(u) \le 1/|\cL(\cT_y)|=d^{-\|y\|}.  \]This, in conjunction with
\eqref{e:P2tuv} and \eqref{e:P2tuv2}, concludes the proof of \eqref{eq: SRWtree0}.
\end{proof}

\begin{lem}
Let $\TT $ be the infinite $d$-ary tree. Let  $\mathbb{T}_{d+1}^{\mathrm{reg}}$ be the $(d+1)$-regular tree.\footnote{The only difference between $\TT $ and $\mathbb{T}_{d+1}^{\mathrm{reg}} $ is that the degree of the root of the former is $d$ while that of the latter is $d+1$.} Let $(Z_i)_{i=0}^{\infty}$ (resp.\ $(\widehat Z_i)_{i=0}^{\infty}$) be a SRW on $\TT$ (resp.\ $\mathbb{T}_{d+1}^{\mathrm{reg}}$). Denote the corresponding distributions, starting from vertex $z$ by $\Pr_z^{\TT}$ and $\Pr_z^{\mathbb{T}_{d+1}^{\mathrm{reg}}}$, respectively, and the corresponding transition kernels for time $i$ by $P_{i}^{\mathbb{T}_{d}}$ and $P_{i}^{\mathbb{T}_{d+1}^{\mathrm{reg}}}$, respectively. Then 
\begin{equation}
\label{e:P2tuv2}
\max_{y,u \in \VV_{d,n} } \Pr_y[X_{2t}=u,T_{\cL}>2t] \le \max_{z,u} \Pr_z^{\TT}[Z_{2t}=u] \le\frac{d+1}{d} \max_z \Pr_z^{\TT}[Z_{2t}=z] \le Cf(t). \end{equation}
\end{lem}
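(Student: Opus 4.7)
The three inequalities in \eqref{e:P2tuv2} are of quite different natures and I would treat them sequentially. For the first, I use the canonical identification $\phi$ of $\VV_{d,n}\setminus\cL$ with the depth-$<n$ portion of $\TT$ (root to root). Under this identification the degrees match on both sides: the root has degree $d$, every other identified vertex has degree $d+1$. Consequently, for any trajectory $(w_0,\dots,w_{2t})$ in $\VV_{d,n}\setminus\cL$, its probability under SRW on $\cT_{d,n}$ coincides with the probability of $(\phi(w_0),\dots,\phi(w_{2t}))$ under SRW on $\TT$. Summing over trajectories ending at $u$ gives
\[
\Pr_y[X_{2t}=u,\,T_{\cL}>2t]=\Pr_{\phi(y)}^{\TT}[Z_{2t}=\phi(u),\,|Z_s|<n\text{ for all }s\le 2t]\le\Pr_{\phi(y)}^{\TT}[Z_{2t}=\phi(u)],
\]
and the first inequality follows upon taking maxima over $y$ and $u$.

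For the second inequality I invoke the standard reversible-chain Cauchy--Schwarz bound. SRW on $\TT$ is reversible with stationary measure $\pi(v)=\deg(v)\in\{d,d+1\}$, and a direct calculation using reversibility gives $\sum_v P^{t}(x,v)^2/\pi(v)=P^{2t}(x,x)/\pi(x)$. Applying Cauchy--Schwarz to $P^{2t}(x,y)=\sum_v P^{t}(x,v)P^{t}(v,y)$ after rewriting $P^{t}(v,y)=(\pi(y)/\pi(v))P^{t}(y,v)$ yields
\[
P^{2t}(x,y)\le\sqrt{\tfrac{\pi(y)}{\pi(x)}\,P^{2t}(x,x)\,P^{2t}(y,y)}\le\sqrt{\tfrac{d+1}{d}}\,\max_z P^{2t}(z,z),
\]
and the second inequality follows since $\sqrt{(d+1)/d}\le (d+1)/d$.

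For the third inequality I reduce to the classical return-probability bound on the vertex-transitive tree $\mathbb{T}_{d+1}^{\mathrm{reg}}$. There the distance-from-start process $\dist(\widehat Z_0,\widehat Z_t)$ is a reflected biased walk on $\mathbb{Z}_+$ with up-probability $d/(d+1)$ and down-probability $1/(d+1)$ (deterministically up from $0$), and the standard decomposition into excursions combined with the Catalan asymptotics $C_k\sim 4^k/(\sqrt\pi\, k^{3/2})$ produces the sharp bound $C\,t^{-3/2}\rho^{2t}$ with spectral radius $\rho=2\sqrt{d}/(d+1)$. For $z$ the root of $\TT$, $\Pr_{\rr}^{\TT}[Z_{2t}=\rr]$ is exactly the return-to-zero probability of this same reflected walk, so the identical bound applies. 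For a non-root $z\in\TT$, I would embed $\TT\hookrightarrow\mathbb{T}_{d+1}^{\mathrm{reg}}$ and split $\Pr_z^{\TT}[Z_{2t}=z]$ by whether the walk visits the root of $\TT$: the no-visit part is dominated by the $\mathbb{T}_{d+1}^{\mathrm{reg}}$ return probability via the same coupling as in step~1, and the visits-part is reduced to the root case via a first-passage/last-exit decomposition at the root, paying only a multiplicative constant (uniform in $z$) for the cost of the excursions in and out of the root.

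The main obstacle will be the sharp polynomial prefactor $t^{-3/2}$. The exponential rate $\rho^{2t}$ is softly available from the spectral radius alone (Kesten, Carne--Varopoulos), but extracting the $t^{-3/2}$ requires the explicit Catalan/ballot-type combinatorics of non-negative excursions of the height process. A secondary delicacy is that $\TT$ is not vertex-transitive, so $\Pr_z^{\TT}[Z_{2t}=z]$ genuinely depends on $z$; the general-$z$ bound cannot simply be read off from the root computation but must be handled via a decomposition of excursions through the root.
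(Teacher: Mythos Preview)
Your first inequality is correct and is exactly the paper's ``straightforward coupling argument.'' Your second inequality via Cauchy--Schwarz is also correct, and in fact yields the slightly better constant $\sqrt{(d+1)/d}$; this is a genuinely different route from the paper, which instead sandwiches through the regular tree: it shows $\max_{z,u}\Pr_z^{\TT}[Z_{2t}=u]\le\frac{d+1}{d}\max_{z,u}\Pr_z^{\mathbb{T}_{d+1}^{\mathrm{reg}}}[\widehat Z_{2t}=u]=\frac{d+1}{d}\Pr_v^{\mathbb{T}_{d+1}^{\mathrm{reg}}}[\widehat Z_{2t}=v]\le\frac{d+1}{d}\max_z\Pr_z^{\TT}[Z_{2t}=z]$ via two coupling inequalities and vertex-transitivity of $\mathbb{T}_{d+1}^{\mathrm{reg}}$.

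The trade-off shows up in the third inequality. The paper simply cites the bound $\Pr_v^{\mathbb{T}_{d+1}^{\mathrm{reg}}}[\widehat Z_{2t}=v]\le C f(t)$ as classical, and because its middle step already passed through the \emph{regular} tree return probability, this single on-diagonal estimate suffices for all of $\max_z\Pr_z^{\TT}[Z_{2t}=z]$. Your Cauchy--Schwarz shortcut, while slicker for the middle inequality, does not give you this for free: you are left needing the on-diagonal bound for \emph{every} $z\in\TT$, and $\TT$ is not transitive. Your handling of the root case is fine, and the ``no-visit-to-root'' part for general $z$ is indeed bounded by the regular-tree diagonal via the same coupling as in step 1. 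The ``visits-root'' part can be made to work --- writing it as $\frac{d+1}{d}\sum_{s,s'}\Pr_z[T_{\rr}=s]\Pr_z[T_{\rr}=s']\Pr_{\rr}[Z_{2t-s-s'}=\rr]$ and using that convolutions of sequences of order $m^{-3/2}\rho^m$ against a subprobability distribution preserve the $t^{-3/2}\rho^{2t}$ shape --- but ``paying only a multiplicative constant uniform in $z$'' hides a genuine computation here; the constant in $\Pr_z[T_{\rr}=s]\precsim s^{-3/2}\rho^s$ depends on the depth of $z$, and you need to check it washes out in the convolution. So your approach is sound but the last step is more laborious than you suggest, whereas the paper's regular-tree detour sidesteps it entirely.
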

\emph{Proof of \eqref{e:P2tuv2}. }Indeed the first inequality follows by a straightforward coupling argument. The last inequality is well-known (e.g.\ \cite[Equation (2.5)]{ram}). We now prove the middle inequality. Let $v$ be an arbitrary vertex in $\mathbb{T}_{d+1}^{\mathrm{reg}}$. Then   \[\max_{z,u} \Pr_z^{\TT}[Z_{2t}=u] \le \frac{d+1}{d}  \max_{z,u} \Pr_z^{\mathbb{T}_{d+1}^{\mathrm{reg}}}[\widehat Z_{2t}=u] \] \[= \frac{d+1}{d}  \Pr_v^{\mathbb{T}_{d+1}^{\mathrm{reg}}}[\widehat Z_{2t}=v] \le \frac{d+1}{d} \max_{z} \Pr_z^{\TT}[Z_{2t}=z], \]
where the equality follows from the fact that  $\mathbb{T}_{d+1}^{\mathrm{reg}}$  is vertex-transitive and hence (\cite[Corollary 7.3]{aldous})  $\max_{x,y}P_{2t}^{\mathbb{T}_{d+1}^{\mathrm{reg}}}(x,y) =P_{2t}^{\mathbb{T}_{d+1}^{\mathrm{reg}}}(v,v) $   and the two inequalities are left as an exercise.\footnote{Hint: we may think of $\TT$ as a subgraph of $\mathbb{T}_{d+1}^{\mathrm{reg}} $ and couple  $(Z_i)_{i=0}^{\infty}$ with $(\widehat Z_i)_{i=0}^{\infty}$ starting from the same vertex, such that whenever $\widehat Z_i $ is in $\TT$ we have that $\widehat Z_i=Z_i$.} \qed

\begin{lem}
\label{lem: SRWontree2} 
Consider SRW on $\cT_{d,n}=(\VV_{d,n},\EE_{d,n})$. There exist some absolute constants $c,c_{1},C$ such that for all  $u,v \in \VV_{d,n} $ all  $1 \le k \le n$ and all $t \in [d^{k-1},d^k] $ we have that 
 \begin{equation}
 \label{eq: SRWtree6}
\frac{c_{1}p^t(u,v)}{\deg(v)} \le d^{-k}+d^{-(k-1)} \exp(-c td^{-(k-1)} ).
\end{equation}
Conversely, for every $u \in \VV_{d,n}, v \in \cL $, $ \max\{1, \| u \wedge v  \|\} \le k \le n $ and    $ t \in [d^{k-1},d^{k}] $  
\begin{equation}
 \label{eq: SRWtree7}
CP^{t}(u,v) \ge  [d^{-k}+  d^{-(k-1)} \exp(-c' td^{-(k-1)} )] \cdot 1_{t-\|u\|  \text{ is even} }
\end{equation}
Finally, there exists some absolute constants $C_{1},C_2,c_{0}>0$ such that for all $t \ge C_{1}d^{n-1} \log d $,  \begin{equation}
 \label{eq: Linftymixingtree}
\max_{u,v \in \VV_{d,n} :\, t-(\|u\|- \|v\| ) \text{ is even} }|\frac{p^t(u,v)}{\deg(v)/|\EE_{d,n}|}-1| \le C_{2} \exp [- c_{0}t d^{-(n-1)} ] . 
\end{equation} 
In particular, for every $v \in \cL$ and every $t \ge 1 $ we have that
 \begin{equation}
 \label{eq: SRWtree6''}
\sum_{i=0}^{t} p^i(v,v) \asymp  \log_d (dt) + t d^{-n}. 
\end{equation} 

\end{lem}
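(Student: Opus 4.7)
The plan is to prove the four estimates in order, with the upper bound \eqref{eq: SRWtree6} serving as the main engine. My tools are the $M_{t}$-decomposition identity \eqref{eq: SRWtree1}, the hitting-time estimates \eqref{eq: SRWtree5}--\eqref{eq: SRWtree5''}, the point-probability bound \eqref{eq: SRWtree5'}, the level-profile estimate \eqref{eq: SRWtree2}, and the free-tree comparison \eqref{e:P2tuv2}. The guiding heuristic is that for $t\in[d^{k-1},d^{k}]$ the walk starting at $u$ has just had enough time to approximately equilibrate within a subtree of depth about $k$ containing $u$, so $p^{t}(u,\cdot)/\deg(\cdot)$ should be of order $d^{-k}$ on that subtree, with exponentially small corrections coming from the small probability of having ascended above that scale.

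For the upper bound \eqref{eq: SRWtree6} I would plug \eqref{eq: SRWtree1} into $p^{t}(u,v)$ and split the sum at $i=k$. In the tail $i\ge k$ the prefactor $d^{-(i-\|v\|)}$ is at most $d^{\|v\|-k}$, and \eqref{eq: SRWtree2} together with $\pi_{|v|}\asymp\deg(v)d^{-n+|v|}$ (from \eqref{e:piofY}) and the fact that $\Pr_{u}[T_{\cL}>t]$ is negligible in the range $t\ge d^{k-1}\gg n$ gives $\Pr_{u}[\,|X_{t}|=|v|\,]\precsim\deg(v)d^{-\|v\|}$, producing the $\deg(v)d^{-k}$ term. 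For $i<k$ I would exploit the isomorphism between the trajectory-confining subtree $\cT_{z_{i}}$ and $\cT_{d,i}$: by a strong-Markov decomposition at successive hits of the ancestors of $u$ combined with \eqref{eq: SRWtree5''}, one obtains $\Pr_{u}[M_{t}\le i]\precsim e^{-ctd^{-i}}$, and the resulting weighted sum $\sum_{i=1}^{k-1}d^{-i}e^{-ctd^{-i}}$ is saturated at $i\asymp k-1$ (the geometric growth in $d^{-i}$ is dominated by the double-exponential decay once $td^{-i}$ is large), giving the advertised $\deg(v)d^{-(k-1)}e^{-ctd^{-(k-1)}}$ term.

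For the matching lower bound \eqref{eq: SRWtree7} with $v\in\cL$, I would exhibit an explicit path. Writing $j=\|u\wedge v\|\le k$, the lower halves of \eqref{eq: SRWtree5} and \eqref{eq: SRWtree5''} imply that the walk from $u$ hits $u\wedge v$ by time $t/8$ with probability bounded away from zero (since $t\ge d^{k-1}\ge d^{j-1}$), then descends into the depth-$k$ subtree $\cT_{\overleftarrow{v_{k}}}$ containing $v$ within $O(t)$ further steps with constant probability, and the remaining $\Theta(t)$ steps approximately equilibrate inside $\cT_{\overleftarrow{v_{k}}}$; the point-probability lower bound \eqref{eq: SRWtree5'} then supplies a final sub-path of the correct parity landing exactly at $v$, yielding $p^{t}(u,v)\succsim d^{-k}$. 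The $d^{-(k-1)}e^{-ctd^{-(k-1)}}$ piece (dominant when $j$ is near $k$) is produced by additionally requiring the walk to stay inside the depth-$(k-1)$ subtree throughout, whose cost is exactly the exponential factor, again via \eqref{eq: SRWtree5''} applied to the birth--death chain $|X_{\cdot}|$.

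For the $L_{\infty}$ mixing \eqref{eq: Linftymixingtree} I would apply \eqref{eq: SRWtree6} and \eqref{eq: SRWtree7} with $k=n$: when $t\ge C_{1}d^{n-1}\log d$ the factor $e^{-ctd^{-(n-1)}}$ is at most a small negative power of $d$, which for $C_{1}$ large is absorbed into the leading term $d^{-n}\asymp\deg(v)/|\EE_{d,n}|$ on the parity class consistent with $t-(\|u\|-\|v\|)$. The Green's function estimate \eqref{eq: SRWtree6''} then follows by dyadic summation of the diagonal case $u=v$: for $v\in\cL$, $p^{i}(v,v)\asymp d^{-k'}+d^{-(k'-1)}e^{-cid^{-(k'-1)}}$ with $k'=\lceil\log_{d}(di)\rceil$, so $i\in[d^{k'-1},d^{k'}]$ contributes $\asymp 1$ per dyadic scale (totalling $\asymp\log_{d}(dt)$), while for $t\gg d^{n-1}\log d$ the mixing regime adds $td^{-n}$. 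The main technical obstacle I anticipate is the sharpness of $\Pr_{u}[M_{t}\le i]$: achieving the correct rate $td^{-i}$ in the exponent rather than a looser one requires the Doob-transform bias \eqref{eq: SRWtree9} to control the walk conditioned on not ascending too far, so that successive returns to a given ancestor each carry a geometric $1/d$ chance of escape upwards and the total escape time concentrates at order $d^{i}$.
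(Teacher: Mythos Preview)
Your treatment of \eqref{eq: SRWtree6} and \eqref{eq: SRWtree7} is a reasonable variant of the paper's. The paper streamlines \eqref{eq: SRWtree6} by first reducing to the diagonal via the general reversibility fact $\max_{x,y}p^{2t}(x,y)/\deg(y)=\max_{x}p^{2t}(x,x)/\deg(x)$ and then invoking \eqref{eq: P2txx}, which avoids having to control $\Pr_{u}[\,|X_{t}|=|v|\,]$ for arbitrary $u,v$; your direct use of \eqref{eq: SRWtree1} works but requires more bookkeeping with \eqref{eq: SRWtree2}. For \eqref{eq: SRWtree7} the paper also exhibits an explicit scenario, conditioning on the events $\{M_{t/2}=i=M_{t}\}$ for $i\in\{k-1,k\}$ and using a stochastic-domination argument rather than \eqref{eq: SRWtree5'}.

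There is, however, a genuine gap in your argument for \eqref{eq: Linftymixingtree}. Applying \eqref{eq: SRWtree6} and \eqref{eq: SRWtree7} with $k=n$ at times $t\ge C_{1}d^{n-1}\log d$ only yields that $p^{t}(u,v)/\deg(v)$ lies between two constant multiples of $d^{-n}$, i.e.\ that the ratio $p^{t}(u,v)/(\deg(v)/|\EE_{d,n}|)$ is bounded above and below by absolute constants. That is strictly weaker than the claim: \eqref{eq: Linftymixingtree} asserts that this ratio converges to $1$ with an explicit exponential rate in $t$. Two-sided heat-kernel bounds with constants $c,C\neq 1$ cannot by themselves force convergence to stationarity. (Note also that \eqref{eq: SRWtree7} is only stated for $v\in\cL$, so your argument does not even produce the two-sided constant-factor bound for all pairs.)

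The paper closes this gap with a spectral step: from \eqref{eq: SRWtree6} one has $\max_{v}p^{2s}(v,v)/\widehat\pi(v)\precsim 1$ at a burn-in time $2s\asymp d^{n-1}\log d$, and separately one bounds the spectral gap of $P^{2}$ via $1/(1-\gamma)\precsim t_{\mathrm{mix}}\precsim\max_{x}\E_{x}[T_{\mathbf{r}}]\precsim d^{n-1}$ (the mixing-time bound comes from the standard coupling at the root). The Poincar\'e inequality then gives $\max_{v}|p^{2(s+i)}(v,v)/\widehat\pi(v)-1|\precsim\gamma^{i}\precsim e^{-c_{0}id^{-(n-1)}}$, and reversibility upgrades the diagonal estimate to all pairs of matching parity. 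You need some version of this gap/contraction input; the heat-kernel bounds alone are not enough.
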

For our purposes, we shall only need to apply \eqref{eq: SRWtree7} when $u,v \in \cL$. We could not find a reference even for this case. We prove the more general case as we find this tail estimate to be of self interest.
\begin{rem}
Observe that SRW on $\cT_{d,n}$ has period 2. This is the reason for the form of the normalization in \eqref{eq: Linftymixingtree}. However, it follows straightforwardly from Lemma \ref{lem: SRWontree2}  that the $L_{\infty}$ mixing time of both rate 1 continuous-time and discrete-time lazy (obtained by replacing $P$ by $\half (I+P)$) SRWs on  $\cT_{d,n}$ is $\asymp d^{n-1} \log d $ (uniformly in $d$). We record this observation in Corollary \ref{c:mixing}, although it will not be used in what comes.
\end{rem}
\begin{rem}
Similar reasoning as in \eqref{eq: SRWtree7} shows that
for every $u \in \VV_{d,n}, v \in \cL $ and  $ t \le [16\| u \wedge v  \|,d^{\| u \wedge v  \|-1}] $ such that $t-\|u\|$ is even, we have that  $P^{t}(u,v) \succsim \sfrac{1}{d^{\| u \wedge v  \|}} \cdot \sfrac{d^{\| u  \|-1}+t}{d^{\| u \wedge v  \|-1}}$.  
\end{rem}
\emph{Proof of Lemma \ref{lem: SRWontree2}: }
We first prove \eqref{eq: SRWtree6}.  It is standard that $\max_{x,y} \frac{p^{2t}(x,y)}{\deg(y)} =\max_{x} \frac{p^{2t}(x,x)}{\deg(x)}$, for all $t \ge 0 $ (e.g.\ \cite[Equation (3.60)]{aldous}). Since $\max_{x,y} \frac{p^{t}(x,y)}{\deg(y)}  $ is non-increasing in $t$ it suffices to consider even times and the case $u=v=:x$.   Let $k \le n$ and $d^{k-1} \le t \le d^{k}$. By \eqref{eq: SRWtree0} we only need to treat the case that $k > \|x\|$. Recall that by \eqref{eq: P2txx}
\[p^{2t}(x,x)  \le\sum_{j \ge \|x\| }\Pr_x[X_{2t} =x, M_{t}=j] = \sum_{j \ge \|x\| }\Pr_x[| X_{2t} |=|x|, M_{t}=j]d^{\|x\|-j}.\]
We shall now show that the sum is dominated by the contribution coming from the terms corresponding to $j \in \{k-1,k\}$. We first argue that we may ignore all $j > k +C \log_d t $ in the above sum as by \eqref{eq: SRWtree5} we have that $\Pr_{x}[M_t \ge k +C \log_d t ] \le \frac{1}{t^2}$.  

Denote the $(j-\|x\|)$-th ancestor of $x$ by  $z_j:=\overleftarrow{x_{j-\|x\|}} \in \cL_{n-j} $.   Then by \eqref{eq: SRWtree4}  (applied to $\cT_{z_j} $ instead of $\cT_{d,n}$)   for every $ \|x\| < j<k$ we have that
\begin{equation}
\label{eq: Mtsmall}
\begin{split}
 \Pr_x[M_{t} = j-1 ] & \le \Pr_x[M_{t} < j ] =\Pr_x[T_{z_j}>t] \\ & \le \Pr_x[T_{z_j}>(\sfrac{dt}{16 d^{j}} )16d ^{j-1}]\le 2^{-\left\lfloor \frac{dt}{16d^{j}} \right\rfloor }.
\end{split}
\end{equation}

Using \eqref{eq: SRWtree1} and the fact that $d^{k-1} \le t \le d^k  $ we get that for  $j \le k +\lceil C \log_d t \rceil $ we have      \[\Pr_x[ X_{2t} \in \cL_{|x|} \mid M_{t}=j] \le \Pr_{n-j}^{]n[}[T_{n}>t]+ 4 \pi_{|x|}  \Pr_{n-j}^{]n[}[T_{n} \le t]=(4+o(1)) \pi_{|x|}.    \]   This together with \eqref{eq: Mtsmall},  \eqref{eq: P2txx}, \eqref{eq: SRWtree5} and the estimate $\pi_{|x|} =\Theta( \deg(x)d^{-\|x\|}) $ yield
\begin{equation}
\label{e:5.9apl}
\begin{split}
& cp^{2t}(x,x)  \le    \sum_{j = \|x\| }^{\min \{ k+\lceil C \log_d t \rceil , n\}} \Pr_x[M_{t}=j] \Pr_x[| X_{2t} |=|x| \mid M_{t}=j]d^{\|x\|-j}
\\ & \le (4+o(1)) \pi_{|x|} \left( \sum_{j = \|x\| }^{k-2} 2^{-\left\lfloor \frac{t}{16d^{j}} \right\rfloor } d^{\|x\|-j}+\sum_{j=k-1}^{\min \{ k+\lceil C \log_d t \rceil , n\}} \Pr_x[M_{t}=j]d^{\|x\|-j} \right) \\ & \le C'  \pi_{|x|} \max \{\Pr_x[M_{t}=k-1] d^{\|x\|-k+1}, d^{\|x\|-k} \} \\ & \le \bar C \deg(x)\max \{d^{-(k-1)}\exp(-c td^{-(k-1)} ),d^{-k}  \}.  
\end{split}
\end{equation}
This concludes the proof of \eqref{eq: SRWtree6}.
We now prove \eqref{eq: SRWtree7}. 

Fix some $u \in \VV_{d,n} , v \in \cL $. 
Now fix some $ \| u \wedge v  \| \le k \le n $ and some $ d^{k-1} \le t \le d^{k}$. For simplicity, we assume that $t$,  $\|u\|$ and $n$ are even.  Using \eqref{eq: SRWtree5''} it is not hard to show that there exist some $\delta,\gd'>0$ such that  \[\Pr_u[M_{t/2}=k=M_{t} ] \ge \delta' \Pr_u[M_{t/2} =k] \ge \gd  .\] 
\[\Pr_u[M_{t/2}=k-1=M_{t} ] \ge \gd' \Pr_u[M_{t} =k-1]  \ge \delta \exp(-c' td^{-(k-1)} ).\]
Let $i \in \{k-1,k \}$. Using the fact that (by a standard coupling argument) for all $\ell \in \N $ the conditional distribution of $(Y_s)_{s =0}^\ell$ started from $n-i$, conditioned on $T_{n-i-1}>\ell $, stochastically dominates its unconditional distribution (used in the second inequality below), by averaging over $T_{\cL_{n-i}} $  (used in the first inequality) and over $T_{n} $  (used in the penultimate inequality) we get that
\begin{equation}
\label{e:leafmon}
\begin{split} & \Pr_u[|X_{t} |=n\mid M_{t/2}=i=M_{t} ] \ge \inf_{s \le t/2:\, s-i \text{ is even}  } \Pr_u[|X_{t} |=n\mid T_{\cL_{n-i}}=s,T_{\cL_{n-i-1}}>t  ] \\ & \inf_{s \le t/2:\, s-i \text{ is even}  } \Pr_u[|X_{t} |=n\mid T_{\cL_{n-i}}=s ] = \inf_{\ell \ge t/2:\, \ell-i \text{ is even}  }\Pr_{n-i}^{]n[}[Y_{\ell}=n]   \\ & \ge \Pr_{n-i}^{]n[}[T_{n} \le t/2 ] \inf_{ \ell \ge0}\Pr_{n}^{]n[}[Y_{2\ell}=n]  \ge c_2, 
\end{split}
\end{equation}
where we have used the fact that $\Pr_{i}^{]n[}[Y_{2\ell}=i]$ is non-increasing in $\ell$ for all $i$ (this is true for every reversible Markov chain, e.g.\ \cite[Proposition 10.25]{levin2009markov}) and hence $\inf_{ \ell \ge0}\Pr_{n}^{]n[}[Y_{2\ell}=n]=\lim_{\ell \to \infty} \Pr_{n}^{]n[}[Y_{2\ell}=n]=2\pi_n$. Finally, since $d^{k-1} \le t \le d^k $ we get that
\begin{equation*}
\begin{split}
P^{t}(u,v) & \ge \sum_{i=k-1}^k \Pr_u[M_{t/2}=i=M_{t} ]\Pr_u[|X_{t} |=n\mid M_{t/2}=i=M_{t} ]d^{-i} \\ & \succsim \max \{d^{-(k-1)}\exp(-c' td^{-(k-1)} ),d^{-k}  \}.
\end{split}
\end{equation*}

We now prove \eqref{eq: Linftymixingtree}. Fix some  $t \ge 2s:= \lceil C_{3}d^{n-1} \log d \rceil $.  By conditioning on $X_1 $ (when $t$ is odd) it suffices to consider the case that $t$ is even. We may therefore consider the SRW whose transition kernel is $P^2$. It is not hard to verify that on each of its two irreducibility classes its stationary distribution is  $\widehat \pi(v):=\deg(v)/|\EE_{d,n}| $. By \eqref{eq: SRWtree6} $ \max_v \frac{p^{2s}(v,v)}{\widehat \pi(v)} \precsim 1$. Let $\gamma $ be the  largest non-unit eigenvalue of $P^2$ (all of its eigenvalues lie in $[0,1]$ and two of them equal $1$ as it has 2 irreducibility classes). Let $t_{\mathrm{mix}}^{(1)} $ and $t_{\mathrm{mix}}^{(2)}$ be the total-variation mixing times of the walk corresponding to $P^2$ on its two irreducibility classes (in practice, these two mixing times are close to one another). Let $t_{\mathrm{mix}} = \max \{ t_{\mathrm{mix}}^{(1)},t_{\mathrm{mix}}^{(2)}\} $. By reversibility $\frac{1}{1-\gamma} \precsim t_{\mathrm{mix}} $ (e.g.\ \cite[Theorem 12.4]{levin2009markov}). We argue that  $ t_{\mathrm{mix}} \precsim \max_x  \E_{x}[T_{\rr}] \precsim d^{n-1}$ (where $\E_{x}[T_{\rr}]$ is defined w.r.t.\ SRW on $\cT_{d,n}$).  This follows from a standard coupling argument (e.g.\ \cite[Example 5.3.4]{levin2009markov}) in which two walks (starting from levels of the same parity) are coupled so that after the first time they reach the same level at some time (which occurs typically in $\asymp n$ steps) both walks are always  at the same level. Thus they collide once reaching the root. Thus the hitting time of the root can be used to bound the coupling time (the time until the two walks collide in the aforementioned coupling) from above, which by the coupling characterization of the total-variation distance implies that $t_{\mathrm{mix}} \precsim \max_x  \E_{x}[T_{\rr}]$, as claimed (e.g.\ \cite[Theorem 5.4]{levin2009markov}).  Thus $ 1-\gamma\succsim \frac{1}{\max_x  \E_{x}[T_{\rr}]} \succsim  d^{-n+1}. $ By Poincar\'e inequality (applied to the walk corresponding to $P^2$) together with the estimates  $ \max_v \frac{p^{2s}(v,v)}{\widehat \pi(v)} \precsim 1$ and  $\gamma=1-(1-\gamma) \le  e^{-(1-\gamma)} \le \exp[-c_{0}d^{-(n-1)} ] $, we get that for all $i \ge 0$ \[ \max_v | \frac{p^{2(s+i)}(v,v)}{\widehat \pi(v)} -1| \le \max_v | \frac{p^{2s}(v,v)}{\widehat \pi(v)} -1| \gamma^{i} \precsim  \gamma^{i}   \precsim \exp [- c_{0}i d^{-(n-1)} ]. \] Finally, by reversibility $\max_v | \frac{p^{2(s+i)}(v,v)}{\widehat \pi(v)} -1| = \max_{v,u: \|u\|-\|v\| \text{ is even} } | \frac{p^{2(s+i)}(v,u)}{\widehat \pi(u)} -1|$ (e.g.\ \cite[Equation (2.2)]{spectral}) for all $i,s \ge 0$.  
\qed

\begin{cor}
\label{c:mixing}
Consider  continuous-time with exit rate 1 from each vertex. Denote its distribution at time $t$ started from $x$ by $\mu_t^x(\bullet)$ and let $\mu(\bullet) :=\sfrac{ \deg( \bullet )}{2 |\EE_{d,n}|} $ be its stationary distribution. Let $\tau_{\infty}(\eps):= \inf \{t: \max_{x,y}  |\mu_t^{x}(y)/\mu(y) - 1| \le \eps \}$ be its $\epsilon$ $L_{\infty}$ mixing time. Then there exists $C \ge 1 $ such that for all $d \ge 2 $ and $n$ we have that $\sfrac{1}{C} \le \tau_{\infty}(1/e)/(d^{n-1} \log d) \le C $.
\end{cor}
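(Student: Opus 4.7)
The plan is to transfer the discrete-time $L_{\infty}$ mixing estimate \eqref{eq: Linftymixingtree} and the on-diagonal lower bound \eqref{eq: SRWtree7} of Lemma \ref{lem: SRWontree2} to continuous time via the Poisson representation $\mu_t^x(y) = e^{-t}\sum_{k \ge 0}(t^k/k!)\,P^k(x,y)$. The continuous-time chain is automatically aperiodic: the Poisson clock averages the two parity classes of the bipartite kernel $P$, erasing the parity restrictions in \eqref{eq: Linftymixingtree} and \eqref{eq: SRWtree7}. I will also use that the measure $\widehat{\pi}$ in Lemma \ref{lem: SRWontree2} satisfies $\widehat{\pi}=2\mu$.

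For the upper bound I would set $T:=2C_1 d^{n-1}\log d$, with $C_1$ as in \eqref{eq: Linftymixingtree}. Cauchy--Schwarz in $L^{2}(\mu)$ applied to the spectral decomposition of the reversible semigroup $H_t$ gives
\[
\Bigl|\tfrac{H_T(x,y)}{\mu(y)}-1\Bigr|^{2}\le \Bigl(\tfrac{H_T(x,x)}{\mu(x)}-1\Bigr)\Bigl(\tfrac{H_T(y,y)}{\mu(y)}-1\Bigr),
\]
reducing matters to an on-diagonal bound $H_T(v,v)/\mu(v)-1\le 1/e$ uniformly in $v$. Since $P^{k}(v,v)=0$ for odd $k$, one has
\[
\tfrac{H_T(v,v)}{\mu(v)}=2e^{-T}\sum_{k \text{ even}}\tfrac{T^{k}}{k!}\,\tfrac{P^{k}(v,v)}{\widehat{\pi}(v)}.
\]
For $k\ge T/2\ge C_1 d^{n-1}\log d$, \eqref{eq: Linftymixingtree} gives $P^{k}(v,v)/\widehat{\pi}(v)=1+O(e^{-c_{0}kd^{-(n-1)}})=1+O(d^{-c_{0}C_1})$, and $\sum_{k \text{ even}}e^{-T}T^{k}/k!=(1+e^{-2T})/2$, so this range contributes $1+e^{-2T}+O(d^{-c_{0}C_1})$. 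A Chernoff estimate for Poisson gives that $\{\Pois(T)<T/2\}$ has probability $\precsim e^{-T/8}$, while the crude bound $P^{k}(v,v)/\mu(v)\le 1/\mu_{\min}\precsim d^{n}$ makes the $k<T/2$ tail at most $d^{n}e^{-T/8}$, negligible for $C_1$ large. This yields $\tau_{\infty}(1/e)\le 2C_1 d^{n-1}\log d$.

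For the lower bound I would pick any $v\in\cL$ and show $|H_{t}(v,v)/\mu(v)-1|>1/e$ when $t=c_{4}d^{n-1}\log d$ with $c_{4}$ small. Applying \eqref{eq: SRWtree7} with $u=v$ and with $k$ chosen so that $t\in[d^{k-1},d^{k}]$ (take $k=n$ when $c_{4}\log d\ge 1$ and $k=n-1$ otherwise), one gets $P^{j}(v,v)/\mu(v)\ge c_{5}(1+d\,e^{-c'jd^{-(n-1)}})$ for all even $j$ in the relevant window. Since a constant fraction of the Poisson mass $e^{-t}t^{j}/j!$ lies on even $j\in[t/2,3t/2]$, averaging yields
\[
\tfrac{H_{t}(v,v)}{\mu(v)}\ge \tfrac{c_{5}}{4}\bigl(1+d\,e^{-c'c_{4}\log d}\bigr)=\tfrac{c_{5}}{4}\bigl(1+d^{\,1-c'c_{4}}\bigr).
\]
Combining this with the trivial spectral lower bound $H_{t}(v,v)/\mu(v)\ge 1$ and taking $c'c_{4}$ small forces $H_{t}(v,v)/\mu(v)-1>1/e$ for all $d\ge 2$, yielding $\tau_{\infty}(1/e)\ge c_{4}d^{n-1}\log d$.

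The hard part will be the parity bookkeeping: the discrete-time heat kernel bounds hold only for $k$ of the correct parity, and the argument relies on continuous-time Poisson averaging to erase this restriction. Tracking the factor $2$ between $\widehat{\pi}$ and $\mu$, splitting the Poisson sum at $T/2$, and controlling the tail $d^{n}e^{-T/8}$ against the main $O(d^{-c_{0}C_{1}})$ error are the principal technical points; everything else reduces to the heat kernel estimates already proved in Lemma \ref{lem: SRWontree2}.
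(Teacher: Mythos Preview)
Your approach matches the paper's: both transfer the discrete-time heat-kernel estimates of Lemma~\ref{lem: SRWontree2} to continuous time via the Poisson representation $\mu_t^x=\sum_{k}\mathbb{P}[\mathrm{Pois}(t)=k]P^k(x,\cdot)$, reduce to the diagonal, and handle parity by the even/odd Poisson split. Your upper bound is correct; the paper's version is marginally slicker in that it exploits the monotonicity of $i\mapsto P^{2i}(v,v)$ to write $\mu_{4t}^{v}(v)\le P^{2t}(v,v)\,\mathbb{P}[N(4t)\ge 2t,\ N(4t)\text{ even}]+\mathbb{P}[N(4t)<2t]$, thereby avoiding your crude tail estimate $P^{k}(v,v)/\mu(v)\le 1/\mu_{\min}$, but both arguments go through.

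Your lower bound has a gap. The inequality $H_{t}(v,v)/\mu(v)\ge (c_{5}/4)(1+d^{\,1-c'c_{4}})$ inherits the unspecified absolute constant $c_{5}\asymp 1/C$ from \eqref{eq: SRWtree7}, and for bounded $d$ there is no reason this exceeds $1+1/e$, however small you take $c'c_{4}$. Invoking the ``trivial spectral lower bound $H_{t}(v,v)/\mu(v)\ge 1$'' does not help: it gives $H_{t}(v,v)/\mu(v)-1\ge 0$, not $>1/e$, and taking the maximum of the two lower bounds gains nothing. (A related minor point: your dichotomy ``$k=n$ if $c_{4}\log d\ge 1$, else $k=n-1$'' is incomplete---for small $c_{4}$ and $d=2$ one may have $t<d^{n-2}$.) The fix is routine: for $d$ in a bounded range one has $\log d\asymp 1$, so it suffices to prove $\tau_{\infty}(1/e)\succsim d^{n-1}$, which follows either from $\tau_{\infty}(1/e)\ge 1/\lambda_{2}$ combined with a conductance bound $\lambda_{2}\precsim d^{-(n-1)}$, or by applying \eqref{eq: SRWtree7} at a smaller time $t\asymp d^{n-m}$ with $m$ a fixed large integer, where the on-diagonal ratio is $\succsim d^{m}$ and hence beats any constant. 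The paper's own treatment of this direction is quite terse---it invokes \eqref{eq: SRWtree7} together with a Poincar\'e-type estimate $|\tau_{\infty}(1/e)-\tau_{\infty}(\eps)|\precsim d^{n-1}|\log\eps|$ without spelling out the details---so your level of rigor here is comparable; only the sentence about ``combining with $H_t(v,v)/\mu(v)\ge 1$'' is actually incorrect.
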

\begin{proof}
We first note that $\max_{x,y}  |\mu_t^{x}(y)/\mu(y) - 1|=\max_{x}  \mu_t^{x}(x)/\mu(x) - 1 $ (e.g.\ \cite[Equation (2.2)]{spectral}).
If $(X_i)_{i = 0}^{\infty}$ is a discrete-time SRW on $\cT_{d,n} $ and $(N(t))_{t \ge 0 } $ is an independent homogeneous rate 1 Poisson process on $\R_+$, then $(X_{N(t)})_{t \ge 0}$ is a continuous-time SRW on $\cT_{d,n} $. It follows that $\mu_t^{x}(x)= \sum_{i \ge 0}\mathbb{P}[N(t)=2i]p^{2i}(x,x)$. The claim that  $C\tau_{\infty}(1/e) \ge d^{n-1} \log d $ now follows from \eqref{eq: SRWtree7}, together with the estimate $|\tau_{\infty}(1/e) - \tau_{\infty}(\eps)| \precsim d^{n-1} |\log \eps |   $  for all $\eps$, which follows from the Poincar\'e inequality (cf.\ the proof of \eqref{eq: Linftymixingtree}).  We now prove that  $\tau_{\infty}(1/e) \le C d^{n-1} \log d $. Let $t \in \N$. Since $P^{2i}(x,x) $ is decreasing in $i$ (\cite[Proposition 10.25]{levin2009markov})
\[\mu_{4t}^{x}(x) \le P^{2t}(x,x) \mathbb{P}[N(4t) \ge2 t, N(4t) \text{ is even}  ]+  \mathbb{P}[N(4t) <2 t] \] \[ \le P^{2t}(x,x) \mathbb{P}[ N(4t) \text{ is even}  ] + (e/4)^{-2t} \le (\sfrac{1}{2}+\sfrac{C}{\sqrt{t}} )P^{2t}(x,x) + (e/4)^{-2t} ,  \]
where we have used the estimates $\mathbb{P}[N(4t) <2 t] \le (e/4)^{-2t} $ (\cite[Exercise 20.6]{levin2009markov}) and $| \mathbb{P}[ N(4t) \text{ is even}  ]-\half  |\precsim t^{-1/2}$.\footnote{In fact, using Poisson thinning, one can generate $(N(s))_{s \ge 0}$ by sampling a rate 2 Poisson process $(M(s))_{s \ge 0}$  and then keeping each point independently w.p.\ $1/2$. Hence for all $s \ge 0$ we have that $\mathbb{P}[ N(s) \text{ is even}  ]=\mathbb{P}[ M(s)=0 ]+\half \mathbb{P}[ M(s)>0 ]= \half (1+ e^{-2s}) $.} The claim that   $\tau_{\infty}(1/e) \le C d^{n-1} \log d$ now follows by \eqref{eq: Linftymixingtree} by substituting above $t=C d^{n-1} \log d$ and noting that $2\mu(x)=\deg(x)/|\EE_{d,n}|$.
\end{proof}     
\emph{Proof of Proposition \ref{cor:range}:}
We begin with the proof of \eqref{eq: Range3}. Fix $k,t,A$ and $y$ as in \eqref{eq: Range3}.  We first argue that there exist $C_1,c_1>0$ such that
\begin{equation}
 \label{eq: Range2}
\forall \, 8\|y\| +d^{\|y\|-1}\le t \le d^{n}, \quad  c_1 g(t) \le \E_y[|R_t \cap \cL |] \le C_1g(t). 
\end{equation}
 Clearly, it suffices to consider the case that $y \in \cL$ and that $d^{\|y\|-1}\le t \le d^{n}$.\footnote{Indeed, starting from $y$  the leaf set of $\cT_y(d^k) $ is hit by time $8\|y\|$ with probability bounded from below. This clearly implies that it suffices to show that $c_1 g(t) \le \E_y[|R_t \cap \cL |]  $ when $y  $ is a leaf, in order to obtain the same bound for general $y$ (up to a time shift by $8\|y\|$ time units). Conversely, conditioning on hitting the leaf set by time  $8\|y\|$ can only increase the mean of $|R_t \cap \cL |$.} Let $y \in \cL$. For every leaf $v \in \cL$ and $s \ge 0$ let \[e_{v}(s):=\sum_{i=0}^s p^i(y,v)=\sum_{i=0}^s p^i(v,y),\] \[a(s):=\sum_{i=0}^s p^i(y,y) \asymp \log_d (ds) \quad \text{and} \quad p_v(s):=\Pr_y[T_v \le s], \] where the estimate $a(s)\asymp \log_d (ds) $ follows from Lemma \ref{lem: SRWontree2}. As   $e_v(r)=\sum_{i=0}^{r} \Pr_y[T_v = i]a(r-i)$ (used in the second and third inequalities below), for all $v \in \cL$ and $r \in \N$ we have  
\[\frac{ce_v(t)}{1+\log_d t} \le \frac{e_v(t)}{a(t)} \le p_v(t) \le \frac{e_v(2t)}{a(t)} \le \frac{Ce_v(2t)}{1+\log_d t} ,  \]
 Using the fact that $\sum_{v \in \cL}e_v(s)=\E_y[|\{i:i \le s,\, X_i \in \cL \}|]=\Theta ( s)$ (as it is at most $s$ and by the paragraph following \eqref{e:leafmon} is at least $s \pi_n \ge s/4 $, where as before $\pi$ is the stationary distribution of $(|X_j|)_{j=0}^{\infty}$) summing over $v \in \cL$ concludes the proof of  \eqref{eq: Range2}.

We now prove \eqref{eq: Range3}. 
Again, it suffices to consider the case that $z \in \cL(\cT_y)$. Let $z \in \cL(\cT_y)$. We argue that 
\[ \forall s_1,s_2 \ge 1, \quad \Pr_z[|R_t \cap \cL | \ge s_{1}+s_{2}] \le \Pr_z[|R_t \cap \cL | \ge s_{1}]\Pr_z[|R_t \cap \cL | \ge s_{2}].   \]
To see this, let $\ell(j):=\inf \{i:|R_i \cap \cL |=j \} $. Then given that $\ell(s_1) \le t $, in order that also $\ell(s_1+s_2) \le t $ the walk, which at time $\ell(s_1)$ is at some leaf, has to visit $s_2$ new leafs in $t-\ell(s_1) \le t $ steps.
Thus $\Pr_z[|R_t \cap \cL | \ge j \lceil4 \E_z[|R_t \cap \cL |]\rceil ] \le 4^{-j} $ for all $j \ge 1$. In particular, 
 \[\E_z[|R_t \cap A |^{2}] \le \E_z[|R_t \cap \cL  |^{2} ] \le C_2 (\E_z[|R_t \cap \cL |])^{2} \le C_3 g^2(t).\] 
We  now argue that $|R_t \cap A |$ and $ 1_{X_{t} \in \cT_y(d^{k})} $ are positively correlated and that $\E_z[|R_t \cap A |] \succsim \delta g(t) $.  Using the fact that $\Pr_z[X_{t} \in \cT_y(d^{k})] \succsim 1 $, which follows from the choice $t \le d^{k}$ in conjunction with \eqref{eq: SRWtree5}, this implies that
\begin{equation}
\label{eq: Range4}
\begin{split}
\E_z[|R_t \cap A | \cdot 1_{X_{t} \in \cT_y(d^{k})}] & \ge \E_z[|R_t \cap A |]\Pr_z[X_{t} \in \cT_y(d^{k})] \succsim \delta g(t).
\end{split} 
\end{equation}
This implies \eqref{eq: Range3} by the Paley-Zygmund
inequality (e.g.~\cite[Lemma 4.1]{kallenberg2002foundations}). 

For the first inequality in \eqref{eq: Range4} we argue that the walk conditioned on $X_{t} \in \cT_y(d^{k})$, denoted by $(\widehat X_\ell)_{\ell=0}^{\infty}$, can be coupled with the unconditional walk  $( X_\ell)_{\ell=0}^{\infty}$, both started from $z \in \cL( \cT_y)$, such that (1) $| \widehat X_\ell|  \ge | X_\ell| $ for all $\ell$ and (2) whenever  $| \widehat X_\ell|  = | X_\ell| $ and $X_{\ell} \in \cT_y(d^{k})  $ we have that $X_{\ell}=\widehat X_\ell $. In fact, the same remains true when the walk conditioned on $\|X_t\| \le k $ takes the role of either $(X_{\ell})_{\ell=0}^{t}$ or of $(\widehat X_\ell)_{\ell=0}^{t}$ in the previous statement (and the other walk playing the same role it previously had). We leave this as an exercise. Hence in order to prove \eqref{eq: Range3} it remains to verify that indeed $\E_z[|R_t \cap A |] \succsim \delta g(t)$.

  By Lemma \ref{lem: SRWontree} with probability at least $c_6>0$  the walk reaches the root of $\cT_y(d^{k})$ and then by time $t/2$ returns to $\cL(\cT_y(d^{k})) $ before escaping  $\cT_y(d^{k})$.    On this event, the expected number of visits to $\cL(\cT_y(d^{k})) $ between time $t/2$ and time $t$ is at least $c_7 t$. By symmetry, on the aforementioned event, every $v \in \cL(\cT_y(d^{k})) $ has the same contribution to the aforementioned mean. Using the same reasoning as in the proof of \eqref{eq: Range2}, it follows that $\Pr_{z}[T_v \le t] \succsim \frac{t}{d^{k}\log_d (dt)} $ for every $v \in \cL(\cT_y(d^k))$. This concludes the proof of \eqref{eq: Range3} by summing over all $v \in A$.
We now prove \eqref{eq: Range3'}.

In the setup of part (ii), by Lemma \ref{lem: SRWontree} for all $z\in \cT_{x} $ and $32i \le  t \le d^{i-1} $ we have
\begin{equation*}
\begin{split}
\Pr_z[T_{\cL(\cT_y)}<t/2] & \ge \Pr_z[T_{x \wedge y }<t/4] \Pr_{x \wedge y}[T_{\cL(\cT_y)}<t/4] \\ & \succsim td^{-\|x \wedge y\|+1} \cdot d^{ \|y\|-\|x \wedge y\|}        = td^{-(2\|x \wedge y\|-\|y\|-1)}.
\end{split}
\end{equation*}
 Conditioned on $T_{\cL(\cT_y)}=s \le t/2 $ and on that $X_s \in \cT_{y,k}^{\ell}$ we have that $X_s$ is uniformly distributed on the leaf set of  $\cT_{y,k}$. The proof of  \eqref{eq: Range3'} can now be completed using the Markov property and similar calculations to the ones from the proof of \eqref{eq: Range3} in order to show that for all $s \le t/2$ and all $\ell$ we have that, given that  $T_{\cL(\cT_y)}=s  $ and that $X_s \in \cT_{y,k}^{\ell}$, the expected number of leafs of $ \cT_{y,k}^{\ell}$ visited by the walk by time $t$ is $\asymp g(t) $, while its second moment is $\asymp g^2 (t)$.  
\qed

\medskip

\emph{Proof of Corollary \ref{cor:coverleaf}:}
We first prove part (i). Denote the collection of particles occupying $B$ at time 0 whose position at time $s$ is in $\cT_y$ by $\W'_B:=\{w_1,\ldots,w_{|\W'_B|} \} $. Since (by Lemma \ref{lem: SRWontree}) each particle has chance at least $c_0>0$ to be at $\cT_y $ at time $s$, by Poisson thinning, if $C_2$ is taken to be sufficiently large, then the probability that $|\W'_{B}| \le \frac{C_2 c_0}{4} \log |\VV_{d,n} | $ is at most $|\VV_{d,n} |^{-4}$. 
Let $(\w_i(j))_{j=0}^s $ be the walk performed by $w_i$. Denote \[J_i:=\{\w_i(j):j \in ]s[ \} \cap \cL(\cT_y)  \quad \text{and} \quad F_i:=\cup_{j:j \in [i] }J_i .\] Then $F_{i} \setminus F_{i-1}$ is the set of ``new" leafs of $\cT_y$ discovered by $w_i$. Let $U_i$ be the event that  $|F_i  | \ge d^k/4$ or $|F_{i} \setminus F_{i-1}| \ge  cs/ \log_d s   $,
 for some $c>0$ to be determined later (where $F_0:=\eset$). Denote $Z_i:=1_{U_i} 1_{|\W'_{B}| \ge i}$. Observe that conditioned on $1_{|\W'_{B}| \ge i} $ we have that either  $|F_{i-1}  | \ge d^k/4$ and then also $|F_i| \ge d^k/4 $ and so  $Z_i=1$, or   $|F_{i-1}  | < d^k/4$ and then by \eqref{eq: Range3} with $\delta =3/4 $ we have that the probability that  $|F_{i} \setminus F_{i-1}| \ge  cs/ \log_d s   $ (and hence $Z_i=1$) is bounded from below.  It follows  that we can pick some $c'>0$ such that for all $i \in \N$  \[\E[Z_{i} \mid F_{i-1},|\W'_{B}| ] \ge c'1_{|\W'_{B}| \ge i}  \] $\as$.  It follows that conditioned on  $|\W'_{B}|= \ell > \frac{C_2c_{0}}{4} \log |\VV_{d,n} | $, the  distribution of $\sum_{i \in [\ell]} Z_i$ stochastically dominates the $\mathrm{Bin}(\ell,c')$ distribution. Hence if $C_2 \ge 128/(c_{0}c')$ and $n$ is sufficiently large so that $\frac{4\log_d s}{ c   } \cdot d \le 32 \log |\VV_{d,n} | $ then $\ell c' > 32 \log |\VV_{d,n} | \ge  \frac{4\log_{d} s}{c   } \cdot \frac{d^k}{s}$. Thus by Chernoff's bound  \[\Pr \left[\sum_{i \in [\ell]} Z_i \le  \frac{\log_{d} s}{cs   } \cdot \frac{d^k}{4} \mid |\W'_{B}|= \ell \right]< |\VV_{d,n} |^{-4}, \quad \text{for all} \quad \ell > \frac{C_2c_{0}}{4} \log |\VV_{d,n} | .\]  The proof of part (i) is concluded by noting that on the event $\sum_{i \in [|\W'_{B}|]} Z_i >  \frac{\log_{d} s}{cs   } \cdot \frac{d^k}{4} $, it must be the case that $|F_{|\W'_{B}|}| \ge d^{k}/4 $ (and recalling that the probability that $|\W'_{B}| \le \frac{C_2 c_0}{4} \log |\VV_{d,n} | $ is at most $|\VV_{d,n} |^{-4}$). 

\medskip

 We now prove part (ii). After reaching level $n-k$, conditioned on being at $\cL(\cT^i)$ (for some $i \in [d^{n-k}]$) the walk is equally likely to be at each leaf of $\cT^i$ and hence is in $A_i$ w.p.\  $|A_i|/d^k \ge 1/4 $ (where this inequality holds by assumption). Let $u \in \cL$. By \eqref{eq: SRWtree5''} starting from $u$ the walk has probability bounded from below to reach level $n-k$ and then return to the leaf set by time $s/2$.  On this event the expected number of visits to $A$ between time $s/2$ and $s$ is at least $(\sfrac{s}{2} \cdot \pi_n) \cdot \min_{i} \frac{|A_{i}|}{d^k}  \succsim s $ (where $\pi$ is as in \eqref{e:piofY}) and so
\begin{equation}
\label{e:Eatleafsbys}
\E_{u}[|\{i \in [s] :X_i \in A \}| ] \succsim s.
\end{equation}
Observe that if $a \in A $ then by reversibility (and the fact that $a,u \in \cL$)
\begin{equation}
\label{e:Eatleafsbys2}
\sum_{i \in [s]} P^{i}(u,a)=\sum_{i \in [s]} P^{i}(a,u) \le  \Pr_{a}[T_u \le s]\sum_{i \in ]s[} P^{i}(u,u) \asymp  \Pr_{a}[T_u \le s] \log_d s .
\end{equation}
 Denote by $Y_u$ the number of particles from $A$ which visit $u$ by time $s$. Then by Poisson thinning $Y_u$ has a Poisson distribution whose mean $\mu_u$ is by \eqref{e:Eatleafsbys2} and \eqref{e:Eatleafsbys}  \[\mu_u:=\sfrac{\la}{2} \sum_{a \in A}\Pr_{a}[T_u \le s] \ge  \frac{c_1 \la\sum_{a \in A}\sum_{i \in [s]} P^{i}(u,a)}{\log_d s }   \] \[ = \frac{c_{1}\la\E_{u}[|\{i \in [s] :X_i \in A \}| ]  }{\log_d s } \ge \frac{c_{2}\la s  }{\log_{d} s } \ge 5\log |\VV_{d,n} |,  \] 
where the last inequality holds provided that $C_1$ is sufficiently large. The proof is concluded by applying a union bound over $\cL$ (noting that $e^{-5\log |\VV_{d,n} |}|\cL| \le |\VV_{d,n}|^{-4}$).  

We now prove part (iii). We first note that if $\cL \subseteq \mathcal{R}_s $, then deterministically we must have that $\mathcal{R}_s= \VV_{d,n}$. That is, the last vertex to be activated must be a leaf. We partition the particles into two independent sets of density $\la/2$ and include the planted particle $\plant$ in the first set. First consider the dynamics only w.r.t.\ the first set (as if the second set of particles does not exist) in the case that the particle lifetime is $s$. Observe that this dynamics is exactly the frog model with particle density $\la/2$. 

Now consider the dynamics only w.r.t.\ the particles from the second set (as if the first set of particles does not exist), where initially the activated particles are the ones whose initial position is one which is    visited by at least one particle from the first set of particles in the aforementioned dynamics of the particles from the first set. Consider the case that the event $\mathrm{Hom}$ occurs w.r.t.\ the dynamics of the particles from the first set (the probability of this event is $\PP_{\la/2}[\mathrm{Hom}] $). Then we may apply part (ii) to the second set of particles to conclude the proof (using the aforementioned observation that it suffices that all leafs are activated). 
\qed

\section{Appendix B - Hitting times in birth and death chains}
\label{s:BD}
A classical result of Karlin and McGregor \cite{karlin} is that for a continuous-time birth and death chain on $]n[:=\{0,1,\ldots,n\}$  the law of the hitting time of the end-point $0$ starting from the other end-point $n$ is a convolution of Exponential distributions, whose parameters are the minuses of the eigenvalues of the generator of the process killed at $0$. This was  rediscovered by Keilson \cite{keilson}.   The discrete-time case is due to Fill \cite{Fill}. In discrete-time the Exponentials are replaced by Geometric distributions, provided that the eigenvalues of the restriction of the transition matrix to $[n]=\{1,2,\ldots,n\}$ are all positive.  Miclo \cite[Proposition 7.1]{miclo} showed that it suffices that all of the aforementioned eigenvalues will be non-negative. He also  gave an extremely elegant generalization of this result to arbitrary reversible Markov chains. 

The next proposition asserts that if $K(i,i)=0$ for all $i \neq 0 $ then essentially one does not have to require  the eigenvalues to be non-negative. 
\begin{prop}
\label{p:log-con}
Let $K$ be the transition matrix of a birth and death chain on $]n[:=\{0,1,\ldots,n\}$ such that $K(i,i)=0$ for all $i \in [n] $. If $n$ is even (resp.\ odd) then
\begin{itemize}
\item[(1)]
 The eigenvalues of the restriction of $I-K^2$ to $\{2,4,\ldots,n\}$ (resp.\ $\{2,4,\ldots,n-1\}$), denoted by $\gamma_1,\gamma_2,\ldots ,\gamma_{n/2}$ (resp.\  $\gamma_1',\gamma_2',\ldots ,\gamma_{\sfrac{n-1}{2}}'$) all lie in $(0,1]$.
\item[(2)] The law of $T_0/2$ (resp.\ $\half (T_0-1)$) started from $n$ is a convolution of $n/2$ (resp.\ $\sfrac{n-1}{2} $) Geometric distributions with parameters $\gamma_1,\gamma_2,\ldots ,\gamma_{n/2}$ (resp.\  $\gamma_1',\gamma_2',\ldots ,\gamma_{\sfrac{n-1}{2}}'$). In particular, it is log-concave and unimodal\footnote{A distribution $\mu$ on $\N$ is unimodal if $\mu(n)$ is non-decreasing in $n$ for $n \le m_*$ and non-increasing in $n$ for $n \ge m_*$, where $m_*$ is the mode of $\mu$ (i.e.\ $\mu(m_*)=\max_n \mu(n)$).}.
\item[(3)] $1/\gamma_1 \ge \half \mathbb{E}_2[T_0]= \frac{\pi(2)+\pi(4)+\cdots +\pi(n)}{ \pi(2)P^{2}(2,0)} $ (resp.\ $1/\gamma_1' \ge \half \mathbb{E}_2[T_0]= \frac{\pi(2)+\pi(4)+\cdots +\pi(n-1)}{ \pi(2)P^{2}(2,0)} $), where $\pi(i)$ is the sum of the edge weights of the two edges incident to $i$.
\end{itemize}
  
\end{prop}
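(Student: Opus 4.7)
The approach exploits the bipartite structure forced by $K(i,i) = 0$ on $[n]$: the chain restricted to $[n]$ has period $2$, so started from $n$ the walker visits only states of the parity of $n$ at times of matching parity. Hence for $n$ even, $T_0$ is even and $T_0/2$ equals the hitting time of $0$ for $\tilde K := K^2|_{\{0, 2, \ldots, n\}}$ started from $n$; for $n$ odd, the first $K$-step is forced to $n-1$, and $(T_0-1)/2$ is the $\tilde K$-hitting time of $0$ starting from $n-1$. In either case $\tilde K$ is itself a reversible birth-and-death chain on the even states, to which the Karlin--McGregor/Fill/Miclo representation can be applied.

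The crux is showing that the eigenvalues of the sub-stochastic matrix $\tilde K$ restricted to $\{2, 4, \ldots\}$ all lie in $[0, 1)$, equivalently that $I - \tilde K|_{\{2, 4, \ldots\}}$ has spectrum in $(0, 1]$. Non-negativity is exactly where squaring pays off: $K$ is self-adjoint in $L^2(\pi)$ with spectrum in $[-1, 1]$, so $K^2$ is positive semi-definite; conjugating by $\Psi = \mathrm{diag}(\sqrt{\pi})$ gives a symmetric PSD matrix whose principal submatrix indexed by $\{2, 4, \ldots\}$ is still PSD, and this submatrix is similar to $\tilde K|_{\{2, 4, \ldots\}}$. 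The strict upper bound $< 1$ is Perron--Frobenius for sub-stochastic irreducible non-negative matrices, valid because $\tilde K$ leaks mass $P^2(2, 0) > 0$ to the absorbing state $0$ from state $2$.

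Parts (1) and (2) then follow immediately from Karlin--McGregor \cite{karlin} (see also \cite{keilson,Fill,miclo}): for a birth-and-death chain on $\{0, 1, \ldots, m\}$ with $0$ absorbing, started at the farthest point $m$, if the eigenvalues $\lambda_i$ of the sub-stochastic transition matrix lie in $[0, 1)$, then the hitting time of $0$ is distributed as a convolution of independent $\mathrm{Geometric}(1 - \lambda_i)$ random variables. Log-concavity and unimodality follow because $\mathrm{Geometric}$ distributions are log-concave on $\mathbb{Z}_+$ and log-concavity on $\mathbb{Z}$ is preserved under convolution.

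For part (3), use the Rayleigh quotient $\gamma_1 = \min_{f \not\equiv 0} \langle (I - \tilde K) f, f \rangle_\pi / \langle f, f \rangle_\pi$ with test function $f = \mathbf{1}$ on $\{2, 4, \ldots\}$. Since state $2$ is the only non-absorbing even state from which $\tilde K$ can reach $0$ in one step, $(I - \tilde K) f = P^2(2,0)\, \mathbf{1}_{\{2\}}$, which yields $1/\gamma_1 \geq \pi(\{2, 4, \ldots\})/[\pi(2) P^2(2, 0)]$. The standard birth-and-death first-passage formula then gives $\mathbb{E}_2^{K^2}[T_0] = \pi(\{2, 4, \ldots\})/[\pi(2) P^2(2, 0)]$, and since each $K^2$-step is two $K$-steps, $\half \mathbb{E}_2[T_0] = \mathbb{E}_2^{K^2}[T_0]$, giving the claimed equality. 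The main difficulty of the whole proof is the non-negativity of the spectrum of $\tilde K|_{\{2, \ldots\}}$ in the second paragraph: general birth-and-death matrices can have negative eigenvalues, and it is precisely the period-$2$ hypothesis $K(i,i) = 0$ (which lets us pass to $K^2$) that rules this out.
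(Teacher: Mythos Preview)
Your proposal is correct and follows the same overall strategy as the paper: reduce to the chain $\widehat K := K^2$ on the even states, check that the eigenvalues of its restriction to $\{2,4,\ldots\}$ lie in $[0,1)$, and invoke the Fill/Miclo representation. The paper differs in two places. For the spectral bound in part~(1), the paper first argues that the eigenvalues of $I-\widehat K$ on \emph{all} even states lie in $[0,1]$ and then applies the Cauchy interlacing theorem to pass to $\{2,4,\ldots\}$; your route via ``$K$ self-adjoint $\Rightarrow K^2$ PSD $\Rightarrow$ principal submatrices PSD'' reaches the same conclusion in one step and is cleaner. For part~(3), the paper takes a different and somewhat heavier path: it invokes complete monotonicity to write the law of $\widehat T_0$ under $\Pr_2$ (and under the quasi-stationary-type measure $\mu$) as a \emph{mixture} of Geometrics with parameters $\gamma_1,\ldots,\gamma_{n/2}$, uses Kac's formula to connect the two, and concludes $\mathbb{E}_2[\widehat T_0]=\sum_i p_i/\gamma_i \le 1/\gamma_1$. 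Your Rayleigh-quotient argument with $f=\mathbf{1}_{\{2,4,\ldots\}}$, exploiting that $(I-\widehat K)\mathbf{1}=P^2(2,0)\mathbf{1}_{\{2\}}$ because only state~$2$ leaks to $0$ in a $\widehat K$-step, is more elementary and self-contained, and gives exactly the same bound.
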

\begin{proof}
As mentioned above Fill \cite[Theorem 1.2]{Fill} showed that for a discrete-time birth and death chain on $]n[$ whose transition  matrix $K$ is such that all of  the eigenvalues of the restriction of the Laplacian $I-K$ to $[n]$ all lie in $(0,1]$, the law of $T_0$ starting from $n$ is a convolution of Geometric distributions, whose parameters are the aforementioned eigenvalues.  In particular, since the Geometric distribution (with every parameter) is log-concave and the class of log-concave distributions is closed under convolutions, it follows that $T_0$ is log-concave (starting from $n$) under the aforementioned eigenvalue condition. Finally, every log-concave distribution is unimodal. We now reduce the general case to this case. We only treat the case that $n$ is even, as the case it is odd is completely analogous. 

Now, if $n$ is even and $K(i,i)=0$ for all $i \in ]n[ $ then instead of $K$ we can consider $\widehat K$ the restriction of $K^2$ to $D:=\{0,2,\ldots n \}$. Observe that because $K(x,x)=0$ for all $x \in [n]$ we have that $\widehat K$ is a birth and death chain on $\{0,2,\ldots n \}$ (after relabeling each state $2i$ by $i$).

We now verify that all of the eigenvalues of the restriction of $\widehat K $ to $\{2,4,\ldots,n \}$ are non-negative. Clearly all of the eigenvalues of $I-   \widehat K$ lie in $[0,1] $. Denote them by $0=\beta_1 < \beta_2 \le \cdots  \le \beta_{\frac{n}{2}+1} \le 1 $. By the interlacing eigenvalues Theorem the eigenvalues of the restriction of $I- \widehat K$ to $\{2,4,\ldots,n\}$, denoted by $\gamma_1<\gamma_2 \le \cdots \le \gamma_{n/2}$, satisfy that $ \beta_i \le \gamma_i \le \beta_{i+1}  $ for all $i \in [n/2]$ and also that  $\gamma_1>0 $ (since the restriction of $\widehat K $ to  $\{2,4,\ldots,n\}$ is strictly sub-stochastic). In particular, $\gamma_{n/2} \le 1 $.  This establishes the non-negativity of the eigenvalues of the restriction of $\widehat K $ to $\{2,4,\ldots,n \}$. 

 Let $\widehat T_0 $ (resp.\ $T_0$) be the hitting time of $0$ w.r.t.\ $K^2 = \widehat K $ (resp.\ $K$) starting from $n$. Then $\widehat T_0=T_0/2 $. Applying Fill's result to $\widehat K$ concludes the proof of part (2). 

We now prove part (3). The identity $\half \mathbb{E}_2[T_0]=\mathbb{E}_2[\widehat T_0]= \frac{\pi(2)+\pi(4)+\cdots +\pi(n)}{ \pi(2)P^{2}(2,0)} $ follows from a general formula for expected crossing times of an edge in birth and death chains in terms of bottleneck ratios (e.g.\ \cite[Eq.\ (5.14)]{basu}). The law of $\widehat T_0$ under both $\Pr_2$ and $\Pr_{\mu}$, where $\mu(2i):=\frac{\pi(2i)1_{i>0 }}{\pi(2)+\pi(4)+\cdots +\pi(n)}$,  is a mixture of Geometric distributions with parameters $0<\gamma_1<\gamma_2 \le \cdots \le \gamma_{n/2}$. For $\Pr_\mu $ this is a standard result in the theory of complete monotonicity (e.g.\ \cite[Lemma 3.7]{basu}). For  $\Pr_2$ this follows from that for $\Pr_{\mu}$ via Kac's formula which asserts that  $\Pr_2[\widehat T_0=t]\mathbb{E}_2[\widehat T_0] =\Pr_\mu[\widehat T_0 \ge t] $ (e.g.\ \cite[Eq.\ (5.13)]{basu}). 

Finally, it follows that there exists some convex combination $0 \le p_1,\ldots,p_{n/2} \le 1 $ such that $\sum_i p_i=1$ and $\mathbb{E}_2[\widehat T_0]=\sum_i p_i/\gamma_i \le 1/\gamma_1$. 
\end{proof}
\end{document}